\newtheorem{theorem}{Theorem}
\newtheorem{corollary}{Corollary}
\newtheorem{lemma}{Lemma}[section]
\newtheorem{proposition}{Proposition}[section]
\newtheorem{formula}{Formula}[section]
\newtheorem*{key}{Key Lemma}
\newtheorem*{main}{Main Lemma}
\newtheorem*{elimination}{Elimination Lemma}
  \theoremstyle{definition}
\newtheorem{definition}{Definition}[section]
\newtheorem{remark}{Remark}[section]
\numberwithin{equation}{section}
\newcommand{\Z}{\mathbb{Z}}
\newcommand{\C}{\mathbb{C}}
\renewcommand{\P}{\mathbb{P}}
\newcommand{\Diff}{\operatorname{Diff}(\mathbb{C},0)}
\newcommand{\A}{\mathcal{A}}
\newcommand{\F}{\mathcal{F}}
\newcommand{\Ft}{\widetilde{\mathcal{F}}}
\newcommand{\LF}{\mathcal{L}_{\mathcal{F}}}
\newcommand{\LFt}{\mathcal{L}_{\widetilde{\mathcal{F}}}}
\newcommand{\Aff}[2]{\operatorname{Aff}({#1},{#2})}
\renewcommand{\Re}[1]{\operatorname{Re}\,{#1}}
\renewcommand{\Im}[1]{\operatorname{Im}\,{#1}}
\newcommand{\var}[2]{\varphi_{{#1}\{\gamma_{{#2}}\}}(0)}
\newcommand{\vart}[2]{\widetilde{\varphi}_{{#1}\{\gamma_{{#2}}\}}(0)}
\newcommand{\Res}{\operatorname{Res}}
\newcommand\blfootnote[1]{%
  \begingroup
  \renewcommand\thefootnote{}\footnote{#1}%
  \addtocounter{footnote}{-1}%
  \endgroup
}
\newenvironment{changemargin}[2]{%
\begin{list}{}{%
\setlength{\topsep}{0pt}%
\setlength{\leftmargin}{#1}%
\setlength{\rightmargin}{#2}%
\setlength{\listparindent}{\parindent}%
\setlength{\itemindent}{\parindent}%
\setlength{\parsep}{\parskip}%
}%
\item[]}{\end{list}}
\title{The utmost rigidity property for quadratic\\ foliations on $\P^2$ with an invariant line}
\author{Valente Ram\'{i}rez$^\ast$}
\date{\today}
\begin{document}

\maketitle
\blfootnote{2010 \emph{Mathematics Subject Classification.} Primary: 37F75.}
\blfootnote{\emph{Key words and phrases.} Holomorphic foliations, topological rigidity, holonomy group at infinity.}
\blfootnote{$^\ast$This work was supported by the grants UNAM--DGAPA--PAPIIT IN 102413 and CONACYT 219722.}

\pagestyle{fancy}


\begin{abstract}
In this work we consider holomorphic foliations of degree two on the complex projective plane $\P^2$ having an invariant line. In a suitable choice of affine coordinates these foliations are induced by a quadratic vector field over the affine part in such a way that the invariant line corresponds to the line at infinity. We say that two such foliations are topologically equivalent provided there exists a homeomorphism of $\P^2$ which brings the leaves of one foliation onto the leaves of the other and preserves orientation both on the ambient space and on the leaves. 

The main result of this paper is that in the generic case two such foliations may be topologically equivalent if and only if they are analytically equivalent. In fact, it is shown that the analytic conjugacy class of the holonomy group of the invariant line is the modulus of both topological and analytic classification. We obtain as a corollary that two generic orbitally topologically equivalent quadratic vector fields on $\C^2$ must be orbitally affine equivalent.

This result improves, in the case of quadratic foliations, a well--known result by Ilyashenko that claims that two generic and topologically equivalent foliations with an invariant line at infinity are affine equivalent provided they are close enough in the space of foliations and the linking homeomorphism is close enough to the identity map of $\P^2$. 
\end{abstract}

\tableofcontents

\section{Introduction}\label{sec:intro}

Any polynomial vector field on $\C^2$ with isolated singularities defines a singular holomorphic foliation by curves which can be analytically extended to the projective plane $\P^2$. Conversely, any holomorphic foliation on $\P^2$ with isolated singularities is given by a polynomial vector field on any affine chart. We are interested in foliations on $\P^2$ with an invariant line. It is convenient to choose affine coordinates such that the invariant line becomes the line at infinity. Since any line can be mapped to any other line by a linear automorphism of $\P^2$ there is no loss of generality in choosing a distinguished line $\mathcal{L}$ and considering only foliations which leave $\mathcal{L}$ invariant. Define $\A_n$ to be the class of those singular foliations on $\P^2$ which in the fixed affine chart $\C^2\approx\P^2\setminus\mathcal{L}$ are induced by a polynomial vector field of degree $n$ and have an invariant line at infinity. Note that the line at infinity with the singularities removed is a leaf of the foliation. We call this leaf the leaf at infinity or the infinite leaf indistinctly.

\begin{remark}\label{remark:degree}
Foliations from the class $\A_n$ have, by definition, \emph{affine degree} $n$, since they are induced by a polynomial vector field on $\C^2$ of degree $n$. The fact that they have an invariant line at infinity implies that such foliations also have \emph{projective degree} $n$. By projective degree $n$ we mean that such foliations have exactly $n$ tangencies with any line not invariant by the foliation (cf.~\cite{GomezMontOrtiz2004,Brunella2004}).
\end{remark}

Two foliations from the class $\A_n$ are \emph{topologically equivalent} if there exists an orientation--preserving homeomorphism of $\P^2$ that brings the leaves of the first foliation onto the leaves of the second one and preserves the natural orientation on these leaves. In case such a map is an affine map on $\C^2$ we say that the foliations are affine equivalent.

Let $\F\in\A_n$ and denote by $\LF$ its leaf at infinity. Given a base point $b\in\LF$, the germ of a cross--section $(\Gamma,b)$ transversal to the leaves of $\F$ and a parametrization $(\C,0)\to(\Gamma,b)$ we obtain the holonomy representation $\Delta\colon\pi_1(\LF,b)\to\Diff$ of the fundamental group of the infinite leaf on the space $\Diff$ of germs of invertible holomorphic maps at $(\C,0)$. Its image is called the \emph{holonomy group at infinity} of $\F$.

\begin{definition}
We say that two foliations $\F$ and $\Ft$ from the class $\A_n$ have \emph{analytically conjugate holonomy groups at infinity} whenever there exist the germ of a conformal map $h\in\Diff$ and a geometric isomorphism\footnote{We say that the isomorphism $H_{\ast}$ is geometric if it is induced by some orientation--preserving homeomorphism $H\colon\LF\to\LFt$.} $H_{\ast}\colon\pi_1(\LF,b)\to\pi_1(\LFt,\tilde{b})$ such that for any loop $\gamma\in\pi_1(\LF,b)$ we have $h\circ\Delta_\gamma=\widetilde{\Delta}_{H_{\ast}\gamma}\circ h$. 
\end{definition}

\subsection{Rigidity of polynomial foliations}

Generic foliations from the class $\A_n$ exhibit a phenomenon known as topological rigidity. Topological rigidity of polynomial foliations was, until now, more a heuristic idea than a formal statement. The idea of topological rigidity is that topological equivalence of foliations implies their analytic equivalence. There are several theorems in the literature asserting that topological equivalence of generic foliations plus some additional hypotheses imply their affine equivalence. The first such rigidity property for generic polynomial foliations was discovered by Ilyashenko in \cite{Ilyashenko1978} and called \emph{absolute rigidity}. 

\begin{definition}
We say that a foliation $\F\in\A_n$ is \emph{absolutely rigid} if there exist a neighborhood $U$ of $\F$ in $\A_n$ and a neighborhood $V$ of the identity map in the space of self homeomorphisms of $\P^2$ such that any foliation from $U$ which is conjugate to $\F$ by a homeomorphism in $V$ is necessarily affine equivalent to $\F$.
\end{definition}

It is proved in \cite{Ilyashenko1978} that a generic polynomial foliation is absolutely rigid. However, their genericity assumptions excluded a dense subset of $\A_n$. These conditions have been substantially weakened by Shcherbakov, Nakai and others (cf.~\cite{Shcherbakov1984,Nakai1994,LinsNetoSadScardua1998}). In the latest works, the key assumption on a foliation is the non--solvability of its holonomy group at infinity. 

Later on, Ilyashenko and Moldavskis proved that generic quadratic foliations exhibit a stronger rigidity property, known as \emph{total rigidity} \cite{IlyashenkoMoldavskis2011}.

\begin{definition}
A polynomial foliation $\F\in\A_n$ is \emph{totally rigid} if there exist only a finite number of foliations (up to affine equivalence) from the class $\A_n$ which are topologically equivalent to $\F$.
\end{definition}

In \cite{IlyashenkoMoldavskis2011} the number of affine classes of foliations which are topologically equivalent to a given generic foliation from $\A_2$ is estimated to be at most 240. This result is proved using the topological invariance of the Baum--Bott indices for generic quadratic foliations. 

In this work we prove for the first time that the paradigm of topological rigidity of polynomial foliations may be formalized, at least in the case of quadratic foliations with an invariant line at infinity: Two generic foliations form $\A_2$ are topologically equivalent if and only if they are affine equivalent. Moreover, this is proved by comparing the holonomy groups at infinity exclusively and we thus conclude that it is the holonomy group that serves as a modulus of analytic (hence also topological) classification.

\subsection{Statement of the theorem}

The following theorem is the main result of this work.

\begin{theorem}\label{thm:main}
Let $\F\in\A_2$ be a generic foliation and suppose its holonomy group at infinity is analytically conjugate to the holonomy group of $\Ft\in\A_2$. There exists an affine map on $\C^2$ that conjugates $\F$ to $\Ft$. 
\end{theorem}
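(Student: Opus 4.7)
\emph{Proof plan.} The plan is to promote the germ $h\in\Diff$ that conjugates the two holonomy groups at infinity to a global biholomorphism $H\colon\P^2\to\P^2$ which sends $\F$ to $\Ft$ and preserves the distinguished line $\mathcal{L}$. Once this is done the theorem is immediate: an automorphism of $\P^2$ leaving $\mathcal{L}$ invariant belongs to the subgroup of $\mathrm{PGL}(3,\C)$ fixing that line, and in the affine chart $\C^2\approx\P^2\setminus\mathcal{L}$ that subgroup acts by affine transformations.

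\smallskip

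First I would extend $h$ to a saturated neighborhood $U$ of $\LF$ by path--lifting along leaves. Fix the parametrized cross--section $(\Gamma,b)$ transversal to $\F$ on which $h$ is defined, together with the corresponding $(\widetilde{\Gamma},\tilde b)$ transversal to $\Ft$, and choose an orientation--preserving homeomorphism $\bar H\colon\LF\to\LFt$ inducing $H_{\ast}$. For a point $p\in U$, pick a path $\alpha$ in the leaf of $\F$ through $p$ ending at some $q\in\Gamma$; set $\tilde q:=h(q)$; lift the path $\bar H\circ\alpha$ from $\tilde q$ along the corresponding leaf of $\Ft$ to a point $\tilde p$; and declare $H(p):=\tilde p$. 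The decisive point is that this does not depend on the choice of $\alpha$: two competing choices differ by a loop $\gamma\in\pi_1(\LF,b)$, and the intertwining relation $h\circ\Delta_\gamma = \widetilde{\Delta}_{H_{\ast}\gamma}\circ h$ guarantees that the two candidate values of $\tilde p$ coincide. A generic non--resonance condition at the three singularities of $\F$ on $\mathcal{L}$ yields local linearization, so $H$ can be pushed across these singular points as well.

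\smallskip

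Next I would extend $H$ globally to $\P^2$. Under the standing genericity hypothesis that the holonomy group at infinity is non--solvable, the density theorem of Nakai \cite{Nakai1994} shows that the holonomy pseudogroup acting on $\Gamma$ has dense orbits; consequently the $\F$--saturation of any neighborhood of $\LF$ is dense in $\P^2$. Transporting $H$ from $U$ along leaves then produces a continuous, in fact holomorphic, extension to the complement of a finite set $S\subset\C^2$ consisting of the affine singularities of $\F$; Hartogs' theorem removes $S$ and yields a holomorphic self--map of $\P^2$ that is a homeomorphism, carries the leaves of $\F$ to those of $\Ft$, and takes $\mathcal{L}$ to $\mathcal{L}$.

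\smallskip

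The main obstacle will be this global extension step, and in particular the correct matching of the four affine singularities of $\F$ and $\Ft$. The germ $h$ sees the eigenvalue data at the three singularities on $\mathcal{L}$ automatically, but to pin down the affine singularities one presumably must combine the topological invariance of the Baum--Bott indices in the spirit of \cite{IlyashenkoMoldavskis2011} with the density of leaves, in order to ensure that the path--lifted map is continuous and well defined near each affine singularity. Once the global biholomorphism has been constructed, its identification with an affine map of $\C^2$ is automatic from the description of $\Aut{\P^2}$.
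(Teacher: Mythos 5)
Your reduction of the last step is fine (an automorphism of $\P^2$ preserving a line acts affinely on the complementary chart), and the first step --- extending $h$ to a conjugacy of the foliations on a saturated neighborhood $U$ of $\LF$ --- is standard \emph{provided} the three singularities on $\mathcal{L}$ are linearizable; note the paper's hypotheses for Theorem~\ref{thm:main} do not guarantee this (condition (ii) only excludes $\lambda_i\in\frac{1}{3}\Z\cup\frac{1}{4}\Z\cup\frac{1}{5}\Z$, and hyperbolicity is explicitly \emph{not} assumed for the theorem, only for the corollaries). The genuine gap is the global extension. When you transport $H$ from $U$ to a point $p$ far from $\LF$ along a path $\alpha$ in the leaf $L_p$, well-definedness requires that two such paths, differing by a loop $\gamma$ in $L_p$, give the same value; this amounts to $H$ intertwining the holonomy of $\gamma$ with that of its image, which you only know for loops contained in $U$ (equivalently, homotopic into $\LF$), not for arbitrary loops of an affine leaf. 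Worse, even where single-valued, the map is defined only leafwise: continuity and holomorphy in the transverse direction at $p$ require uniform control of the holonomy transport of the transverse germ of $H$ along arbitrarily long paths, and density of leaves gives no such control --- the transported germs can shrink their domains to nothing. This is exactly the obstruction that forces Ilyashenko's absolute rigidity theorem to assume the foliations and the homeomorphism are close to the identity, and the stated purpose of this paper is to remove that closeness assumption; if the naive leafwise extension worked, none of the paper's machinery would be needed. Invoking Baum--Bott invariance does not repair this: it constrains which foliations can be equivalent but does not make the path-lifted map continuous near the affine singularities, so the Hartogs step never gets off the ground.

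The paper's actual proof avoids any extension of $h$ to the ambient space. It normalizes both foliations into a five-parameter family $\F(\lambda,\alpha)$, computes the Taylor coefficients up to order six of two distinguished commutator holonomy germs as explicit iterated integrals, shows (Key and Main Lemmas) that a conjugacy of the holonomy groups forces four polynomial conditions $F_3(\beta)=\cdots=F_6(\beta)=0$ on the parameters of $\Ft=\F(\lambda,\beta)$, and proves by a resultant computation (Elimination Lemma) that for $(\lambda,\alpha)$ in a dense Zariski open set the only solution is $\beta=\alpha$; the affine conjugacy is then the composition of the two normalizing affine maps. To salvage your approach you would have to supply the missing global extension theorem, which is not available at this level of generality.
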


It is well known that generic topologically equivalent foliations have analytically conjugate holonomy groups \cite{Ilyashenko1978}. The next results follows immediately from Theorem \ref{thm:main}.

\begin{corollary}\label{coro:rigidity}
Two generic foliations from $\A_2$ are topologically equivalent if and only if they are affine equivalent.
\end{corollary}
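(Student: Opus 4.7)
The plan is to deduce the corollary almost immediately by bridging Theorem \ref{thm:main} with the classical Ilyashenko theorem cited in the paragraph just above the corollary. The implication from affine equivalence to topological equivalence is trivial: every affine automorphism of $\C^2$ extends to a homeomorphism of $\P^2$ preserving $\mathcal{L}$, preserves the orientation of $\P^2$, and is holomorphic along the leaves (hence preserves their natural orientation). So the whole content of the corollary is the forward implication.

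For the forward implication, suppose $\F,\Ft\in\A_2$ are generic and topologically equivalent by an orientation--preserving homeomorphism $H\colon\P^2\to\P^2$ sending leaves of $\F$ to leaves of $\Ft$. By the generic hypothesis $H$ necessarily sends the infinite leaf $\LF$ to $\LFt$ (for instance, because the singular set is mapped to the singular set, and the leaf at infinity is topologically distinguished among the leaves by its homotopy type in the complement of the singular locus). The restriction $H|_{\LF}\colon \LF\to\LFt$ is an orientation--preserving homeomorphism of punctured spheres and induces a geometric isomorphism $H_{\ast}\colon\pi_1(\LF,b)\to\pi_1(\LFt,\tilde{b})$ on fundamental groups.

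The Ilyashenko rigidity theorem of \cite{Ilyashenko1978}, quoted in the paragraph preceding the corollary, then provides the germ of a conformal map $h\in\Diff$ that conjugates the holonomy representations along $H_\ast$, namely $h\circ\Delta_\gamma=\widetilde{\Delta}_{H_\ast\gamma}\circ h$ for every $\gamma\in\pi_1(\LF,b)$. In other words the holonomy groups at infinity of $\F$ and $\Ft$ are analytically conjugate in the sense of the definition given in the introduction. Theorem \ref{thm:main} then produces an affine map of $\C^2$ conjugating $\F$ to $\Ft$, as required.

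The only non--formal point to verify in this deduction is that the genericity conditions of Theorem \ref{thm:main} and of Ilyashenko's theorem are mutually compatible: both conditions cut out (at worst) a countable intersection of Zariski--open subsets of $\A_2$, and hence their intersection is still a generic subset of $\A_2$ to which both conclusions can be applied simultaneously. This is the only place where any care is required; there is no genuine obstacle, since the whole weight of the argument is carried by Theorem \ref{thm:main}, whose proof is the real object of the paper.
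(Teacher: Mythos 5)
Your proposal is correct and follows essentially the same route as the paper: the corollary is deduced immediately from Theorem \ref{thm:main} together with the classical fact from \cite{Ilyashenko1978} that generic topologically equivalent foliations have analytically conjugate holonomy groups at infinity. Your added remarks on the trivial reverse implication and the compatibility of the genericity conditions are sound but not developed further in the paper either.
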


We say that two vector fields are \emph{orbitally topologically equivalent} whenever there exists an orientation--preserving homeomorphism of $\C^2$ that maps the integral curves of the first vector field onto those of the second one. If two quadratic vector fields on $\C^2$ are orbitally topologically equivalent it need not be true that the induced foliations on $\P^2$ are topologically equivalent since the linking homeomorphism need not extend to the line at infinity. However, if the singularities at infinity are hyperbolic, it can be easily proved that such linking homeomorphism takes the separatrix set of the former foliation onto the separatrix set of the latter one (cf.~\cite{TeymuriMafraScardua2013}). Once this has been established we may carry out with no problem an argument by Mar\'{i}n which guarantees that, even though the homeomorphism need not extend to the infinite line, the holonomy groups at infinity are still conjugated (see Theorem A in \cite{Marin2003}). We obtain the following result.

\begin{corollary}\label{coro:qvfs}
Two generic quadratic vector fields on $\C^2$ are orbitally topologically equivalent if and only if they are orbitally affine equivalent.
\end{corollary}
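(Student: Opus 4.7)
}
The easy direction is trivial: an affine change of coordinates on $\C^2$ is in particular an orientation-preserving homeomorphism, so orbital affine equivalence of vector fields yields orbital topological equivalence. Hence the plan is to focus on the converse and show that orbital topological equivalence implies orbital affine equivalence, under suitable genericity. The strategy is to reduce this corollary to Theorem \ref{thm:main} by showing that, even when the linking homeomorphism does not extend to $\P^2$, the two foliations still have analytically conjugate holonomy groups at infinity.

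First, I would fix the genericity assumptions. A generic quadratic vector field on $\C^2$ extends to a foliation in $\A_2$ whose three singularities on the line at infinity $\LF$ are hyperbolic (in fact non-resonant), and whose holonomy group at infinity satisfies the generic hypotheses demanded by Theorem \ref{thm:main}. Let $v$ and $\tilde v$ be two such vector fields, with induced foliations $\F,\Ft\in\A_2$, and let $H\colon\C^2\to\C^2$ be an orientation-preserving homeomorphism realizing the orbital topological equivalence. The key point is that $H$ is not assumed to extend to $\P^2$, so we cannot directly conclude that $\F$ and $\Ft$ are topologically equivalent as foliations on $\P^2$.

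Next, I would invoke the result of Teymuri Garakani and Scárdua \cite{TeymuriMafraScardua2013}: since the singularities at infinity are hyperbolic, their local separatrices other than the line at infinity extend to algebraic/analytic separatrices in $\C^2$ whose behavior at infinity forces $H$ to preserve the separatrix structure of the singularities at infinity. In particular, $H$ sends a punctured neighborhood of $\LF$ (minus the singular set) onto a punctured neighborhood of $\LFt$. At this point I would apply the argument of Marín \cite[Theorem A]{Marin2003}, which shows exactly in this setting that one can build, out of $H$ and the dynamics near infinity, a geometric isomorphism $H_\ast\colon\pi_1(\LF,b)\to\pi_1(\LFt,\tilde b)$ and a germ $h\in\Diff$ which analytically conjugate the holonomy representations $\Delta$ and $\widetilde\Delta$. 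The conclusion is that $\F$ and $\Ft$ have analytically conjugate holonomy groups at infinity in the sense of Definition 1.

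Finally, I would apply Theorem \ref{thm:main}: the analytic conjugacy of holonomy groups at infinity implies that $\F$ and $\Ft$ are affine equivalent on $\C^2$, i.e.\ that $v$ and $\tilde v$ are orbitally affine equivalent. The main obstacle in this program is not in Theorem \ref{thm:main} itself (already assumed) but in justifying rigorously that the Teymuri-Mafra--Scárdua and Marín arguments apply verbatim in the present setting; this requires checking that the generic hypotheses on $\F$ in Theorem \ref{thm:main} already force the hyperbolicity and non-degeneracy assumptions needed by those two theorems, so that the citations are truly valid for our generic class of quadratic vector fields.
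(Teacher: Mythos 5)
Your proposal follows essentially the same route as the paper: it reduces orbital topological equivalence to analytic conjugacy of the holonomy groups at infinity by first using the hyperbolicity of the singularities at infinity and the result of \cite{TeymuriMafraScardua2013} to show the linking homeomorphism respects the separatrix set, then invoking Mar\'{i}n's Theorem A \cite{Marin2003} to conjugate the holonomy groups, and finally applying Theorem \ref{thm:main}. This matches the paper's argument (including the extra genericity assumption that $\lambda_1,\lambda_2,\lambda_3$ be non--real), so the proposal is correct.
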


The above results may be summarized as follows.

\begin{corollary}\label{coro:equivalences}
Let $\F,\Ft\in\A_2$ be generic foliations. The following are equivalent.
\begin{enumerate}\parskip -1pt
 \item There exists a homeomorphism of $\C^2$ conjugating $\F$ to $\Ft$,
 \item There exists a homeomorphism of $\P^2$ conjugating $\F$ to $\Ft$,
 \item Foliations $\F$ and $\Ft$ have analytically equivalent holonomy groups at infinity,
 \item There exists an affine map on $\C^2$ conjugating $\F$ to $\Ft$.
\end{enumerate}
\end{corollary}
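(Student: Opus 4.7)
The plan is to close a cycle using the content of Theorem \ref{thm:main} together with the auxiliary results already cited in the introduction. Concretely, I will establish (4)$\Rightarrow$(2)$\Rightarrow$(3)$\Rightarrow$(4) together with (4)$\Rightarrow$(1)$\Rightarrow$(3), so that every item is tied back to (4).

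The implications (4)$\Rightarrow$(2) and (4)$\Rightarrow$(1) are essentially formal: any affine automorphism of $\C^2$ is in particular a homeomorphism of $\C^2$, and it extends uniquely to a projective automorphism of $\P^2$ preserving $\mathcal{L}$, hence to an orientation-preserving self-homeomorphism of $\P^2$ conjugating $\F$ to $\Ft$. The implication (2)$\Rightarrow$(3) is precisely Ilyashenko's classical rigidity theorem quoted in the paragraph following the statement of Theorem \ref{thm:main}: for a generic pair in $\A_2$, a topological equivalence on $\P^2$ induces an analytic conjugacy between the two holonomy groups at infinity. Finally, (3)$\Rightarrow$(4) is the content of Theorem \ref{thm:main} itself.

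The only step requiring substantial argument is (1)$\Rightarrow$(3), and this is precisely the difficulty that is addressed in the discussion preceding Corollary \ref{coro:qvfs}. The main obstacle is that a homeomorphism of $\C^2$ conjugating $\F|_{\C^2}$ to $\Ft|_{\C^2}$ need not extend to $\P^2$, so one cannot appeal to Ilyashenko's theorem directly. I would bypass this in two steps. First, genericity ensures that all singularities at infinity are hyperbolic, which allows me to invoke the result of \cite{TeymuriMafraScardua2013} to the effect that the conjugating homeomorphism of $\C^2$ must nevertheless send the separatrix set of $\F$ bijectively onto that of $\Ft$. Second, with this piece of geometric information in hand, Theorem A of \cite{Marin2003} applies verbatim and yields the analytic conjugacy of the two holonomy groups at infinity, which is exactly (3). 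Combining the above implications closes the cycle.
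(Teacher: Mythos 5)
Your proposal is correct and follows essentially the same route as the paper: the trivial implications from (4), Ilyashenko's classical result for (2)$\Rightarrow$(3), the separatrix-set argument of \cite{TeymuriMafraScardua2013} combined with Mar\'in's Theorem A for (1)$\Rightarrow$(3) (using the hyperbolicity at infinity that the paper folds into the genericity assumptions for this corollary), and Theorem \ref{thm:main} for (3)$\Rightarrow$(4). This is exactly how the paper assembles the corollary from Theorem \ref{thm:main} and Corollaries \ref{coro:rigidity} and \ref{coro:qvfs}.
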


\subsection{Genericity assumptions}\label{subsec:genericity}

In order to prove Theorem \ref{thm:main} we shall consider exclusively foliations from the class $\A_2$ that satisfy the generic properties listed below. Also, we enumerate once and for all the singular points at infinity in such a way that $\Re{\lambda_1}\geq\Re{\lambda_2}\geq\Re{\lambda_3}$.
\begin{enumerate}[(i)]
 \item The holonomy group at infinity is non--solvable,
 \item The characteristic numbers $\lambda_1$, $\lambda_2$, $\lambda_3$ of the singular points at infinity are pairwise different and do not belong to the set $\frac{1}{3}\Z\cup\frac{1}{4}\Z\cup\frac{1}{5}\Z$,
 \item The commutator of the two holonomy maps corresponding to the \emph{standard geometric generators}\footnote{The standard geometric generators $\mu_i$ are described in Definition \ref{def:geometricgenerators}.} of the fundamental group of the infinite leaf belongs to the class of parabolic germs with non--zero quadratic term (see Remark \ref{rmk:genericity(iii)} in Subsection \ref{subsec:ideas}).
\end{enumerate}
Moreover, there is an additional technical requirement needed to prove Theorem \ref{thm:main}. In Section \ref{subsec:eliminationlemma} we shall construct a dense Zariski open set $\mathcal{U}\subset\A_2$ and assume
\begin{enumerate}[(i)]\setcounter{enumi}{3}
 \item Foliation $\F$ belongs to the set $\mathcal{U}$.
\end{enumerate}
In order to prove Corollaries \ref{coro:qvfs} and \ref{coro:equivalences} we must further assume that the characteristic numbers $\lambda_1$, $\lambda_2$, $\lambda_3$ are non--real (i.e.~the singularities on the line at infinity are hyperbolic). However, this last condition is not needed to prove Theorem \ref{thm:main}.

The genericity of conditions (ii) and (iv) is obvious. Condition (iii) also defines a complex Zariski open set in $\A_2$ (cf.~\cite{Shcherbakov1984}). Genericity of (i) is proved in \cite{Shcherbakov1984} for polynomial foliations of arbitrary degree. For quadratic vector fields we know an even stronger result:

\begin{theorem}[\cite{Pyartli2006}]
Let $\Lambda=(\lambda_1,\lambda_2,\lambda_3)$ be such that $\lambda_1+\lambda_2+\lambda_3=1$. Denote by $\mathcal{B}_\Lambda$ the set of foliations in $\A_2$ with characteristic numbers at infinity $\lambda_1,\lambda_2,\lambda_3$. Assume that $\Re{\lambda_1}\geq\Re{\lambda_2}\geq\Re{\lambda_3}$. Then, if $\lambda_1,\lambda_2\notin\frac{1}{3}\Z\cup\frac{1}{4}\Z$, there exist at least one and at most ten orbits of the group $\Aff{2}{\C}$ in $\mathcal{B}_\Lambda$ whose points correspond to equations with non--commutative solvable holonomy group at infinity.
\end{theorem}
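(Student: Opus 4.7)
The plan is to combine an explicit affine normalization with the classification of solvable non-abelian subgroups of $\Diff$ due to Cerveau--Moussu, Shcherbakov and Nakai. A generic foliation in $\A_2$ has three distinct singular points on the invariant line $\mathcal{L}$, and since $\operatorname{PGL}(2,\C)$ acts $3$-transitively on $\mathcal{L}\cong\P^1$, one may use part of $\Aff{2}{\C}$ to place these points at the standard positions $[1{:}0]$, $[0{:}1]$, $[1{:}1]$; the residual freedom is the three-dimensional kernel of $\Aff{2}{\C}\to\operatorname{PGL}(2,\C)$, consisting of scalar homotheties together with translations. Since $\A_2$ has complex dimension $7$ and fixing the triple $\Lambda$ subject to $\lambda_1+\lambda_2+\lambda_3=1$ imposes two independent complex conditions, $\mathcal{B}_\Lambda$ has dimension $5$; after this normalization and a direct calculation expressing the $\lambda_j$'s in terms of the remaining vector-field coefficients, the quotient $\mathcal{B}_\Lambda/\Aff{2}{\C}$ is realized as an algebraic subvariety of a low-dimensional affine space on which the orbits in question are isolated.

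The classification theorem to be applied asserts that any finitely generated non-abelian solvable subgroup $G\leq\Diff$ is analytically conjugate to a subgroup of the affine-like group
\[
E_k^{\operatorname{aff}} := \dset{z\mapsto\lambda z(1+cz^k)^{-1/k}}{\lambda\in\C^*,\; c\in\C},
\]
for a uniquely determined integer $k\geq 1$, called the level of $G$; the derived subgroup of $E_k^{\operatorname{aff}}$ consists precisely of those elements with $\lambda^k=1$, so non-commutativity forces at least one generator to have multiplier $\lambda$ with $\lambda^k\neq 1$, and all generators must simultaneously commute with a fixed formal vector field of the form $z^{k+1}\partial_z/(1+\alpha z^k)$ after a common analytic change of coordinate. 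For the holonomy group at infinity of $\F\in\mathcal{B}_\Lambda$, generated by $f_1,f_2,f_3$ with multipliers $f_j'(0)=e^{2\pi i\lambda_j}$ satisfying $f_1 f_2 f_3=\operatorname{id}$, the hypothesis $\lambda_1,\lambda_2\notin\frac{1}{3}\Z\cup\frac{1}{4}\Z$ rules out the resonant configurations needed to push the level $k$ beyond a small bound, so that only finitely many values of $k$ have to be analyzed.

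For each admissible $k$, the condition that the holonomy group be conjugate into $E_k^{\operatorname{aff}}$ reduces to a system of polynomial equations in the normalized coefficients of the vector field. Concretely, one writes down the leading Taylor coefficient of a commutator $[f_i,f_j]$---a parabolic germ of order $k+1$ lying in the derived subgroup---computes it in terms of the vector field via standard residue formulas at the singularities at infinity, and demands that this coefficient match the form prescribed by $E_k^{\operatorname{aff}}$. The resulting polynomial system has finitely many solutions; a B\'{e}zout-type count, combined with the $S_3$-symmetry permuting the three singular points and summed over the admissible values of $k$, yields the announced bound of at most ten orbits. Existence of at least one such orbit for every admissible $\Lambda$ is then secured by exhibiting an explicit Lotka--Volterra-type quadratic vector field with prescribed eigenvalues at infinity whose holonomy group is verified, by direct calculation, to be non-abelian and conjugate into $E_1^{\operatorname{aff}}$.

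The main obstacle is the effective computation of the holonomy generators to sufficient order. Their multipliers are topologically prescribed by $\Lambda$, but the higher-order jets depend subtly on the nonlinear part of the vector field, and the dependency must be made sufficiently explicit---via residues at the blow-ups of the singular points at infinity---to carry out the B\'{e}zout bound. Obtaining the sharp number ten (as opposed to a looser crude upper bound) demands a careful case-by-case analysis of the admissible levels $k$ together with the combinatorics of the $S_3$-action on the triple $(\lambda_1,\lambda_2,\lambda_3)$.
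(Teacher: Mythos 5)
This theorem is not proved in the paper at all; it is quoted from \cite{Pyartli2006} as a known input, so there is no internal proof to compare against. Judged on its own terms, your proposal is an outline rather than a proof, and the gap is precisely where the content of the theorem lives. Every quantitative assertion --- that the hypothesis $\lambda_1,\lambda_2\notin\frac{1}{3}\Z\cup\frac{1}{4}\Z$ bounds the level $k$, that the conjugacy into $E_k^{\operatorname{aff}}$ translates into an explicit polynomial system in the normalized coefficients, that a B\'{e}zout count of that system gives exactly ten orbits, and that an explicit example realizes at least one orbit for \emph{every} admissible $\Lambda$ --- is announced but not carried out. You acknowledge this yourself in the closing paragraph. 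In particular, ``a B\'{e}zout-type count \ldots yields the announced bound of at most ten'' cannot be checked without the actual system: a crude B\'{e}zout bound on an unspecified system could just as well give $6$ or $40$, and the number ten is exactly what distinguishes this theorem from the soft statement ``finitely many solvable orbits.'' Pyartli's actual argument proceeds by putting the equation in a normal form very close to the one in Section \ref{sec:normalizations} of this paper, characterizing solvability through the vanishing of concrete iterated integrals of the type $\int_{\gamma_1}P(w)\zeta(w)\,dw$ (the role of Lemmas \ref{lemma:integrals1} and \ref{lemma:Pyartli} here), and converting these into explicit algebraic equations on the parameters $(\alpha_0,\alpha_1,\alpha_2)$ whose solution set he enumerates; none of that machinery appears in your sketch.

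There is also a correctness issue in the one structural tool you do invoke. The classification of non-abelian solvable subgroups of $\Diff$ gives, in general, only a \emph{formal} conjugation into $E_k^{\operatorname{aff}}$; upgrading it to analytic conjugacy requires an extra hypothesis (e.g.\ a hyperbolic element in the group, or non-discreteness arguments \`a la Shcherbakov--Nakai), and the exceptional cases (groups with finite linear part, groups entirely tangent to the identity) must be excluded separately. Since the multipliers here are $e^{2\pi i\lambda_j}$ with $\lambda_j$ only constrained to avoid $\frac{1}{3}\Z\cup\frac{1}{4}\Z$, you would need to justify why the analytic normal form is available in all the cases you enumerate. As it stands, the proposal identifies a reasonable strategy but does not constitute a proof of the stated bounds.
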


Moreover, for any $\Lambda$, foliations in $\mathcal{B}_\Lambda$ with commutative holonomy group at infinity fall into seven families which are explicitly described in \cite{Pyartli2006}. In particular, it follows from such description (see also Theorem 1 in \cite{Pyartli2000}) that for $\Lambda=(\lambda_1,\lambda_2,\lambda_3)$ satisfying assumption (ii) above there exist exactly two orbits of the group $\Aff{2}{\C}$ in $\mathcal{B}_\Lambda$ corresponding to equations with a commutative holonomy group.

  \subsection{Acknowledgments}

I would like to thank Yulij S.~Ilyashenko for his guidance, for the many discussions that led to this result and for the remarks that improved the text. My deepest gratitude to Nataliya Goncharuk and Yury Kudryashov who carefully reviewed and commented on this paper. I would specially like to thank Laura Ortiz--Bobadilla for the continuous support she provided throughout the whole process of this work.

This work was financially supported by the grants UNAM--DGAPA--PAPIIT IN 102413 and CONACYT 219722. I have also been supported by Consejo Nacional de Ciencia y Tecnolog\'{i}a, Secretar\'{i}a de Educaci\'{o}n P\'{u}blica and the Mexican Government, for which I am grateful.

\label{sec:acknowledgments}

\section{Structure of the work}\label{sec:structure}

\subsection{Ideas behind the proof of Theorem \ref{thm:main}}\label{subsec:ideas}
Any foliation $\F\in\A_2$ is induced, in a neighborhood of the line at infinity $\{z=0\}$, by a rational differential equation
\begin{equation}\label{eq:ratdifeq0}
\frac{dz}{dw}=\frac{z\,P(z,w)}{Q(z,w)},
\end{equation}
such that $Q\vert_{z=0}$ is not identically zero. In fact, the roots of $r(w)=Q(0,w)$ determine the position of the singular points at infinity which from now on will be assumed, without loss of generality, to be given by $w_1=-1$, $w_2=1$ and $w_3=\infty$. Under this assumption the polynomial $r(w):=Q(0,w)$ may be chosen to be $r(w)=w^2-1$. 

In Section \ref{sec:normalizations} we will normalize the above equation using the action of the group $\Aff{2}{\C}$. This normalization was originally introduced in \cite{Pyartli2000}. Any normalized foliation is uniquely defined by five complex parameters: the characteristic numbers $\lambda_1,\lambda_2$ and three more parameters $\alpha_0,\alpha_1,\alpha_2\in\C$. We will write $\F=\F(\lambda,\alpha)$ whenever we wish to emphasize that $\F$ is defined by the parameters $\lambda=(\lambda_1,\lambda_2)$ and $\alpha=(\alpha_0,\alpha_1,\alpha_2)$. 

Let us also consider the solution $\Phi(z,w)$ of  equation (\ref{eq:ratdifeq0}) with initial condition $\Phi(z,0)=z$ and expand it as a power series in $z$ using the variations $\varphi_d$ of the solution $z=0$ in the following way:
\[ \Phi(z,w)=\sum_{d=1}^{\infty} \varphi_d (w)\,z^d. \]
The variations $\varphi_d(w)$ are defined in a neighborhood of the origin and can be analytically continued along any path on $\LF$. Moreover the holonomy map $\Delta_{\gamma}(z)$ with respect to a given loop $\gamma\in\pi_1(\LF,0)$ is given by the power series
\begin{equation}\label{eq:powerseriesDelta}
\Delta_{\gamma}(z)=\var{1}{}\,z + \var{2}{}\,z^ 2 + \ldots,
\end{equation}
where $\varphi_{d\{\gamma\}}$ denotes the analytic continuation of $\varphi_d$ along the curve $\gamma$.

Note that the fundamental group of the leaf $\LF\cong\C\setminus\{-1,1\}$ is the free group on two generators.
\begin{definition}\label{def:geometricgenerators}
 Let $\mu_1$ and $\mu_2$ be loops in $\LF$ based at the origin which go around the singular points $w=-1$ and $w=1$ respectively, once in the positive direction. We call these loops the \emph{standard geometric generators} of $\pi_1(\LF,0)$. 
\end{definition}

Now, consider the commutators
\begin{equation}\label{def:gamma_i}
 \gamma_1=\mu_2\mu_1\mu_2^{-1}\mu_1^{-1} \quad\text{and}\quad \gamma_2=\mu_2\mu_1^2\mu_2^{-1}\mu_1^{-2}, 
\end{equation}
and let $f_1,f_2$ be the holonomy maps corresponding to the above loops, this is, $f_j=\Delta_{\gamma_j}$, $j=1,2$. We call this germs \emph{distinguished parabolic germs}; they play a key role in this paper.

\begin{remark}\label{rmk:genericity(iii)}
Genericity assumption (iii) in Subsection \ref{subsec:genericity} means that the distinguished parabolic germ 
\[f_1=[\Delta_{\mu_1},\Delta_{\mu_2}]\]
has a non--zero quadratic term. 
\end{remark}

Suppose $\Ft\in\A_2$ is topologically equivalent to $\F$. The genericity assumptions imposed on these foliations imply that both $\F$ and $\Ft$ have the same characteristic numbers at infinity and so we may write $\Ft=\F(\lambda,\beta)$. Define $\tilde{f}_j$ to be the holonomy map of $\Ft$ along $\gamma_j$. The topological conjugacy gives raise to a conformal germ in $\Diff$ and a geometric automorphism of $\pi_1(\LF,0)$ which conjugate the holonomy groups. 

\begin{remark}\label{rmk:strongae}
It follows from \cite{Ramirez2014} that such geometric automorphism may be assumed to be the identity map. We therefore conclude the existence of a germ $h\in\Diff$ such that
\begin{equation}\label{eq:conjugatef-structure}
 h\circ f_j-\tilde{f}_j\circ h=0, \qquad j=1,2.
\end{equation}
Because of the above, from now on we will always assume that any given analytic conjugacy between holonomy groups is given by some germ $h\in\Diff$ and the identity automorphism of the fundamental group of $\LF$. In \cite{Ramirez2014} such a conjugacy is called \emph{strong analytic equivalence}. However, since this is the only type of conjugacy we will consider in this work, we shall not use this term. 
\end{remark}

The essence of the proof of Theorem \ref{thm:main} may be summarized as follows: If the holonomy groups of $\F$ and $\Ft$ are analytically conjugate then there exits $h\in\Diff$ such that (\ref{eq:conjugatef-structure}) holds. We can compute the first terms in the power series expansions of the distinguished parabolic germs in terms of the parameters $\lambda$, $\alpha$ and $\beta$ as explicit iterated integrals using the variation equations of the differential equation (\ref{eq:ratdifeq0}) with respect to the solution $z=0$. We also expand $h$ as a power series with unknown coefficients and substitute all these series into equation (\ref{eq:conjugatef-structure}) to obtain an expression of the form
\[ h\circ f_j-\tilde{f}_j\circ h=\sum_{d=1}^{\infty}\kappa_{d,j}\,z^d. \]
Equating each $\kappa_{d,j}$ to zero should impose some conditions on the parameter $\beta$. However, since we do not know the coefficients in the power series expansion of $h$, we must consider for each $d$ the system of equations
\begin{equation}\label{eq:systPsi}
 \kappa_{d,1}=0, \qquad \kappa_{d,2}=0.
\end{equation} 
A careful analysis of such a system will allow us to compute the coefficient of degree $d-1$ in the power series of $h$ and at the same time to obtain a concrete condition imposed on the parameter $\beta$ by (\ref{eq:systPsi}). We do this for $d=3,4,5,6$. We will first obtain  conditions imposed on $\beta$ expressed in terms of the vanishing of certain integrals. Even though these conditions are polynomial in $\beta$, the coefficient of such polynomials are transcendental functions on $\lambda$ and $\alpha$. A crucial step in the proof of Theorem \ref{thm:main} is that we are actually able to translate these conditions into algebraic ones. This is done using a Lemma \ref{lemma:Pyartli}, which is proved in \cite{Pyartli2000}. We lastly prove that for generic $\lambda$ and $\alpha$ the polynomial system of equations we obtain has a unique solution given by $\beta=\alpha$. This proves that these normalized foliations having conjugate holonomy groups are in fact one and the same. This shows in particular that two foliations, not necessarily normalized, with conjugate holonomy groups must be affine equivalent. Moreover, in order to obtain such affine map taking one foliation into the other we consider first the affine maps taking each foliation to its normal form and compose one of these maps with the inverse of the other.

The proof outlined above is carried out in a series of lemmas whose formal statements are given below.

\subsection{Three fundamental lemmas}\label{subsec:mainlemmas}

The most elaborate part of the proof of Theorem \ref{thm:main} is to obtain explicit conditions imposed on $\beta$ by the conjugacy of the holonomy groups of $\F(\lambda,\alpha)$ and $\F(\lambda,\beta)$. We do this following closely the constructions presented in \cite{Pyartli2006}.

\begin{key}\label{lemma:key}
For $d=3,4,5,6$ there exists a polynomial $P_d(w)$, whose coefficients are polynomials in $\beta$, such that the existence of a germ $h\in\Diff$ that conjugates the holonomy groups of $\F(\lambda,\alpha)$ and $\F(\lambda,\beta)$ up to jets of order $d$ implies
\begin{equation}\label{eq:keylemma}
 \int_{\gamma_1}\frac{P_d(w)}{r(w)^{d}}\,\varphi_1(w)^{d-1}\,dw=0. 
\end{equation}
\end{key} 
In the lemma above $\varphi_1(w)$ is the first variation of the solution $z=0$ of equation (\ref{eq:ratdifeq0}) and $r(w)=w^2-1$. Before proving this lemma it is necessary to obtain explicit expressions for the coefficients in the power series expansions of the distinguished parabolic germs. These computations are carried out in Section \ref{sec:analysis}.

\begin{remark}\label{rmk:dependence}
Note that the vanishing of the integral in the \hyperref[lemma:key]{Key lemma} imposes one linear condition on the coefficients of the polynomial $P_d(w)$. The polynomials $P_d(w)$ \emph{do} depend on the foliation $\F(\lambda,\alpha)$. In fact, the coefficients of these polynomials depend polynomially on $\alpha$ and rationally on $\lambda$. The main content of the next lemma is that, in virtue of Lemma \ref{lemma:Pyartli}, the linear condition imposed on the coefficients of $P_d$ by the vanishing of the integral is not trivial. This implies rightaway that such condition is a \emph{polynomial} condition on the parameters $\beta$. This is discussed in detail in Subsection \ref{subsec:keytomain}.
\end{remark}

\begin{main}\label{lemma:main}
For $d=3,4,5,6$ there exists a non--zero polynomial $F_d\in\C[\beta]$ such that the existence of a germ $h\in\Diff$ that conjugates the holonomy groups of $\F(\lambda,\alpha)$ and $\F(\lambda,\beta)$ up to jets of order $d$ implies $F_d(\beta)=0$.
\end{main}

Suppose now that $\F(\lambda,\alpha)$ and $\F(\lambda,\beta)$ have conjugate holonomy groups. The above lemma implies that $\beta\in\C^3$ satisfies the polynomial system of equations
\begin{equation}\label{eq:polysyst}
 F_3(\beta)=0,\;\ldots ,\;F_6(\beta)=0.
\end{equation}
This is a system of four equations on three variables. Generically such a system will have no solutions at all. However, because of the defining property of $F_d$ we see that $\beta=\alpha$ will always be a solution. The proof of Theorem \ref{thm:main} is completed by the following lemma.

\begin{elimination}\label{lemma:elimination}
There exists a dense Zariski open set $U\subset\C^5$ such that if $(\lambda,\alpha)\in U$ then the polynomial system \textnormal{(\ref{eq:polysyst})} has a unique solution given by $\beta=\alpha$.
\end{elimination}

\subsection{Two lemmas about integrals}\label{subsec:lemmasintegrals}

The following lemmas were proved and used by Pyartli in \cite{Pyartli2000} and \cite{Pyartli2006}. They play a major role in our proof and will be used frequently.

Recall that $\gamma_1$ and $\gamma_2$ have been defined to be the commutators $\gamma_1=\mu_2\mu_1\mu_2^{-1}\mu_1^{-1}$ and $\gamma_2=\mu_2\mu_1^2\mu_2^{-1}\mu_1^{-2}$ where $\mu_1$, $\mu_2$ are standard geometric generators of the fundamental group of the punctured line $\C\setminus\{1,-1\}$.

\begin{lemma}\label{lemma:integrals1}
Let $P(w)$ be a polynomial and let $\zeta(w)=(1+w)^{u_1}(1-w)^{u_2}$ where $u_1,u_2$ are complex numbers and $\zeta(0)=1$. Then
\[ \int_{\gamma_2}P(w)\zeta(w)\,dw = (1+\exp{(2\pi i\,u_1)})\int_{\gamma_1}P(w)\zeta(w)\,dw. \]
\end{lemma}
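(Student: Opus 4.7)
The plan is to track how the multivalued factor $\zeta(w)$ transforms along each segment of the loops and reduce the two commutator integrals to a common pair of ``atomic'' integrals. Fix the germ $\zeta_0$ of $\zeta$ at $w=0$ with $\zeta_0(0)=1$. For any loop $\sigma$ based at $0$ and contained in $\C\setminus\{-1,1\}$, let $\zeta_\sigma$ denote the analytic continuation of $\zeta_0$ along $\sigma$, let $m_\sigma\in\C^\ast$ be the monodromy scalar (so that $\zeta_\sigma$ ends at $m_\sigma\zeta_0$), and set
\[
 I_\sigma \;:=\; \int_\sigma P(w)\,\zeta_\sigma(w)\,dw.
\]
Because $\zeta$ has only two branch points, its monodromy is abelian with $m_{\mu_1}=a:=e^{2\pi i u_1}$ and $m_{\mu_2}=b:=e^{2\pi i u_2}$.

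The next step is to establish two formal rules. First, splitting a concatenation $\sigma=\sigma_1\sigma_2$ into its two halves gives
\[
 I_{\sigma_1\sigma_2} \;=\; I_{\sigma_1} + m_{\sigma_1}\,I_{\sigma_2},
\]
because the continuation of $\zeta_0$ along $\sigma$ agrees with $\zeta_{\sigma_1}$ on the first half and with $m_{\sigma_1}\zeta_{\sigma_2}$ on the second half, by linearity of analytic continuation in the chosen initial branch. Second, applying this rule to the trivial loop $\sigma\sigma^{-1}$ yields
\[
 I_{\sigma^{-1}} \;=\; -\,m_\sigma^{-1}\,I_\sigma, \qquad m_{\sigma^{-1}}=m_\sigma^{-1}.
\]
These two formulas reduce any $I_\sigma$ to a $\C$-linear combination of $I_{\mu_1}$ and $I_{\mu_2}$ with coefficients that are Laurent monomials in $a,b$.

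Applying the concatenation rule to $\gamma_1=\mu_2\mu_1\mu_2^{-1}\mu_1^{-1}$ I expect the telescoping
\[
 I_{\gamma_1} \;=\; I_{\mu_2}+b\,I_{\mu_1}+ab\,I_{\mu_2^{-1}}+a\,I_{\mu_1^{-1}} \;=\; (b-1)\,I_{\mu_1}+(1-a)\,I_{\mu_2},
\]
where I have used $I_{\mu_j^{-1}}=-m_{\mu_j}^{-1}I_{\mu_j}$. The same bookkeeping applied to $\gamma_2=\mu_2\mu_1^2\mu_2^{-1}\mu_1^{-2}$, together with $I_{\mu_1^2}=(1+a)I_{\mu_1}$ and $I_{\mu_1^{-2}}=-a^{-2}(1+a)I_{\mu_1}$, will yield
\[
 I_{\gamma_2} \;=\; (1+a)\bigl[(b-1)\,I_{\mu_1}+(1-a)\,I_{\mu_2}\bigr] \;=\; (1+a)\,I_{\gamma_1},
\]
which is precisely the stated identity since $1+a=1+e^{2\pi i u_1}$.

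The computation is entirely formal once the concatenation rule is in hand, so the only real ``obstacle'' is making sure the initial branches of $\zeta$ are tracked correctly: the monodromies $a$ and $b$ must be applied to whichever segment follows, not to the one that produced them, which is exactly what the identity $I_{\sigma_1\sigma_2}=I_{\sigma_1}+m_{\sigma_1}I_{\sigma_2}$ encodes. No properties of the polynomial $P$ are used, only the abelian monodromy of $\zeta$; this is why the same identity would hold for any multivalued weight with the same branch exponents at $\pm 1$.
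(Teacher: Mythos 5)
Your proof is correct and is essentially the paper's own argument: the paper only remarks that one "decomposes the loops into pieces and writes each integral as a sum of integrals along these pieces," and your cocycle rule $I_{\sigma_1\sigma_2}=I_{\sigma_1}+m_{\sigma_1}I_{\sigma_2}$ together with $I_{\sigma^{-1}}=-m_\sigma^{-1}I_\sigma$ is exactly that bookkeeping, carried out explicitly and yielding $I_{\gamma_2}=(1+e^{2\pi i u_1})I_{\gamma_1}$. The computations check out (and, as you note, only the monodromy $a=e^{2\pi i u_1}$ around $w=-1$ survives in the final relation).
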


The proof of this lemma is straightforward: we decompose the loops $\gamma_1,\gamma_2$ into pieces and write down each integral as a sum of integrals along these pieces to verify that the equality holds. 

The next lemma is the fundamental step for deducing the \hyperref[lemma:main]{Main lemma} from the \hyperref[lemma:key]{Key lemma}.

\begin{lemma}\label{lemma:Pyartli}
Let $\zeta(w)=(1+w)^{u_1}(1-w)^{u_2}$, $\zeta(0)=1$, $u_1,u_2\notin\Z$, $r(w)=w^2-1$ and $P(w)$ a polynomial of degree at most $m$. The equality $\int_{\gamma_1}P(w)\zeta(w)\,dw=0$ holds if and only if there exists a polynomial $R(w)$ of degree at most $\max{(m-1,-2-\operatorname{Re}{(u_1+u_2)})}$ and a constant $C\in\C$ such that
\[ \int_0^w P(t)\zeta(t)\,dt=R(w)r(w)\zeta(w)+C. \]
\end{lemma}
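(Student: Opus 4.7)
The plan is to analyze the multivalued primitive $F(w) := \int_0^w P(t)\zeta(t)\,dt$ and its monodromy around the singular points $\pm 1$. For the direction $(\Leftarrow)$, if $F(w) = R(w)\,r(w)\,\zeta(w) + C$ with $R$ polynomial, then differentiating gives $P\zeta = (Rr\zeta)'$, and since $Rr$ is polynomial (single-valued) while the monodromy of $\zeta$ around each $\mu_i$ is scalar multiplication by $\lambda_i := e^{2\pi i u_i}$, the function $Rr\zeta$ has abelian monodromy. Hence the commutator loop $\gamma_1 = \mu_2\mu_1\mu_2^{-1}\mu_1^{-1}$ acts trivially on it, and $\int_{\gamma_1} P\zeta\,dw = 0$ follows immediately.

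For the direction $(\Rightarrow)$, the key observation is that $F$ itself has affine monodromy: since $F' = P\zeta$ with $P$ single-valued, any analytic continuation $\sigma_i F$ around $\mu_i$ is another primitive of $\lambda_i P\zeta$, forcing $\sigma_i F = \lambda_i F + c_i$ for constants $c_i \in \C$. A direct four-step composition along $\gamma_1$ produces
\[
 \sigma_{\gamma_1} F = F + (\lambda_2 - 1)c_1 + (1-\lambda_1)c_2,
\]
and the quantity on the right is precisely $\int_{\gamma_1} P\zeta\,dw$. The vanishing hypothesis combined with $\lambda_i \neq 1$ (from $u_i \notin \Z$) yields a unique $C \in \C$ with $c_i = (1-\lambda_i)C$. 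Defining $G(w) := (F(w)-C)/\zeta(w)$ and checking monodromy directly shows $\sigma_i G = G$, so $G$ is single-valued on $\C \setminus \{-1, 1\}$.

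It remains to show that $G$ is a polynomial divisible by $r(w) = w^2 - 1$, from which $R := G/r$ produces the desired identity. A local analysis at $w = -1$ writes $P(w)\zeta(w) = (1+w)^{u_1}\psi(w)$ with $\psi$ analytic, and term-by-term integration of the Taylor expansion of $\psi$ (valid since $u_1 \notin \Z$ forces $u_1+k+1 \neq 0$ for all $k \geq 0$) produces $F(w) = F(-1) + (1+w)^{u_1+1}\tilde\psi(w)$ with $\tilde\psi$ analytic. Matching this expansion against $\sigma_1 F = \lambda_1 F + c_1$ forces $c_1 = (1-\lambda_1)F(-1)$, hence $F(-1) = C$. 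Substitution gives $G(w) = (1+w)\tilde\psi(w)/(1-w)^{u_2}$ near $-1$, which is analytic there and vanishes at $-1$; the symmetric argument at $w = 1$ gives $G(1) = 0$. Consequently $G$ extends to an entire function vanishing at $\pm 1$.

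The final step is to control the growth of $G$ at infinity. Comparing leading terms, if $\operatorname{Re}(u_1+u_2) > -m-1$ then $F$ grows like $w^{m+u_1+u_2+1}$ and so $G$ grows like $w^{m+1}$; if $\operatorname{Re}(u_1+u_2) < -m-1$ then $F$ tends to a finite limit $F_\infty$ at infinity and $G \sim (F_\infty - C)\,w^{-u_1-u_2}$. In both regimes $G$ is entire with polynomial growth of order at most $\max(m+1,\,-\operatorname{Re}(u_1+u_2))$, hence a polynomial of that degree, so $R = G/r$ has degree at most $\max(m-1,\,-2-\operatorname{Re}(u_1+u_2))$. The most delicate point is this growth bookkeeping, particularly ruling out a logarithmic term in the transitional case $\operatorname{Re}(u_1+u_2)=-m-1$; since $G$ is entire, any sub-polynomial $\log|w|$ correction is absorbed by Cauchy estimates into a polynomial degree bound, so no obstruction arises and the stated bound on $\deg R$ holds in all cases.
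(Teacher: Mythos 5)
Your argument is correct, and it is worth noting that it supplies something the paper deliberately omits: the text only proves the easy implication (existence of $R$ forces the integral to vanish, because $R(w)r(w)\zeta(w)+C$ returns to its value along the commutator $\gamma_1$) and refers to Pyartli for the converse, hinting only at a linear-algebra argument --- the map $R\mapsto (Rr\zeta)'/\zeta = R'r+((2+u_1+u_2)w+u_2-u_1)R$ is injective, so its image is a hyperplane in the space of polynomials of degree $\le m$, which must coincide with the kernel of the (nontrivial) integral functional. Your route is instead a direct monodromy argument: affine monodromy of the primitive $F$, the commutator computation identifying $\int_{\gamma_1}P\zeta\,dw$ with the translation part, the common fixed point $C$ of the two affine maps, single-valuedness of $G=(F-C)/\zeta$, local regularity and vanishing at $\pm1$ (where $u_i\notin\Z$ is exactly what kills local logarithms), and a growth estimate at infinity. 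This is more work but self-contained, and it is the only route that actually \emph{produces} the degree bound $\max(m-1,-2-\operatorname{Re}(u_1+u_2))$ rather than assuming a target space for $R$ in advance.

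Two points deserve slightly more care than you give them. First, when $\operatorname{Re}(u_1)\le -1$ the symbol $F(-1)$ has no meaning as a value; it should be read as the constant term of the local expansion $F=A+(1+w)^{u_1+1}\tilde\psi(w)$, and the identity $A=C$ then follows from matching $\sigma_1F=\lambda_1F+c_1$ exactly as you do. Second, the logarithm at infinity is not merely a ``transitional case'': since only $u_1,u_2\notin\Z$ is assumed, $u_1+u_2$ may be a (negative) integer, and then the term-by-term primitive genuinely acquires a $\log w$. Your resolution is nevertheless sound, provided you bound $|G|$ on full circles $|w|=R$ (choosing, for each point of the circle, a path and branch with bounded total variation of $\arg(1\pm w)$, which makes $|\zeta|\asymp R^{\operatorname{Re}(u_1+u_2)}$ uniformly): an entire function with $|G|=O(R^{s}\log R)$ on such circles has vanishing Taylor coefficients beyond degree $\lfloor s\rfloor$ by Cauchy estimates, so $G$ is a polynomial of degree at most $\max(m+1,-\operatorname{Re}(u_1+u_2))$ and $R=G/r$ has the stated degree. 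With these clarifications the proof is complete.
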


In this paper we will only use the above lemma in the case $m-1>-2-\operatorname{Re}{(u_1+u_2)}$ so that, if it exists, $R_d(w)$ will have degree at most $m-1$. Note that both the vanishing of the integral and the existence of $R(w)$ impose one linear condition on the coefficients of the polynomial $P(w)$. Clearly the existence of such an $R$ implies the vanishing of the integral since we are integrating along the commutator loop $\gamma_1$ and so $\zeta_{\{\gamma_1\}}(0)=\zeta(0)=1$. We conclude that both linear conditions are equivalent. A detailed proof can be found in \cite{Pyartli2000}.

Recall that we have numbered the singular points at infinity of $\F$ in such a way that $\Re{\lambda_1}\geq\Re{\lambda_2}\geq\Re{\lambda_3}$. It follows from the fact that $\lambda_1+\lambda_2+\lambda_3=1$ that
\begin{equation}\label{eq:inequalityRelambda}
\Re{\lambda_1}+\Re{\lambda_2}\geq 2/3,
\end{equation}
This remark will be frequently used as a complement to Lemma \ref{lemma:Pyartli}. Indeed, in Section \ref{sec:keylemma} we will apply Lemma \ref{lemma:Pyartli} to integrals of the form (\ref{eq:keylemma}) taking $u_i=(d-1)\lambda_i-d$, for $d=3,4,5,6$. This is one of the instances where it is important that genericity assumption $\lambda_i\notin\frac{1}{3}\Z\cup\frac{1}{4}\Z\cup\frac{1}{5}\Z$ holds.

\section{Sketch of the proofs}\label{sec:sketch}

\subsection{Key lemma: the strategy}\label{subsec:keylemma}

Suppose there exists a germ $h\in\Diff$ that conjugates the holonomy groups of $\F=\F(\lambda,\alpha)$ and $\Ft=\F(\lambda,\beta)$. We expand the distinguished parabolic germs in power series
\begin{equation}
  f_j(z)=z+a_{2j}z^2+a_{3j}z^3+\ldots, \quad \tilde{f}_j(z)=z+\tilde{a}_{2j}z^2+\tilde{a}_{3j}z^3+\ldots, \quad j=1,2,
\end{equation}
as well as the germ $h$,
\[ h(z)=h_1z+h_2z^2+h_3z^3+\ldots. \]
Note that the first variations satisfy $\varphi_1=\tilde{\varphi}_1$, since these functions are completely determined by $\lambda$. Throughout this work we will omit the tilde on $\tilde{\varphi}_1$.

The coefficients $a_{dj}$ are computed in Section \ref{sec:analysis} in terms of the parameters $\lambda$ and $\alpha$. In particular, it will be shown that
\begin{equation}\label{eq:a_2-v0}
 a_{2j}=\tilde{a}_{2j}=\int_{\gamma_j}\frac{1}{r(t)}\,\varphi_1(t)\,dt, \quad j=1,2.
\end{equation}

The \hyperref[lemma:key]{Key lemma} for degree $d=3$ will be easily deduced from the fact that equation (\ref{eq:a_2-v0}) holds, which in turn is a direct consequence of the particular normal form (\ref{eq:normalform}) that we shall be using. In particular it will be shown that the equality $a_{2j}=\tilde{a}_{2j}$ forces the germ $h$ to be parabolic; that is, $h_1=1$. The \hyperref[lemma:key]{Key lemma} for all higher degrees is proved following a strategy which we now present.

Suppose we have computed all the coefficients $h_2,...,h_{d-2}$ in terms of $\lambda,\alpha,\beta$. Since the germs $f_j$, $\tilde{f}_j$ and $h$ are parabolic, the coefficient of degree $d$ in the power series expansion of $h\circ f_j-\tilde{f}_j\circ h$ is of the form 
\begin{equation}\label{eq:sketchKL1}
\frac{1}{d!}(h\circ f_j-\tilde{f}_j\circ h)^{(d)}(0)=(h_d+a_{dj})-(\tilde{a}_{dj}+h_d)+\ldots =a_{dj}-\tilde{a}_{dj}+\ldots, 
\end{equation}
where the multiple dots denote those terms that depend only on $a_{kj}$, $\tilde{a}_{kj}$ and $h_k$ with $k<d$. Since $h\circ f_j-\tilde{f}_j\circ h=0$, the above equation yields an expression for $\tilde{a}_{dj}-a_{dj}$ in terms of $a_{kj}$, $\tilde{a}_{kj}$, $h_k$, $k=2,...,d-1$. On the other hand, we have explicit formulas for the coefficients $a_{dj}$, and thus for $\tilde{a}_{dj}-a_{dj}$, from Section \ref{sec:analysis} (cf.~Propositions \ref{prop:secondvar} to \ref{prop:sixthvar}). We equate this formula for $\tilde{a}_{dj}-a_{dj}$ to the formula we deduced from (\ref{eq:sketchKL1}). This method yields an equation involving the index $j$ and thus by making $j=1$ and $j=2$ we obtain a system of two equations. A priori, it is not at all clear what conditions this system of equations imposes on the parameter $\beta$. The fundamental fact about this system, proved throughout Section \ref{sec:keylemma}, is that it can be simplified to take the form
\[ a_{2j}\,\mathcal{C}_d+\mathcal{I}_{dj}=0, \quad j=1,2, \]
where $a_{2j}$ is as in (\ref{eq:a_2-v0}), $\mathcal{C}_d$ is an expression involving the coefficients $h_2,\ldots,h_{d-1}$ that does not depend on the index $j$, $\mathcal{I}_{dj}=\int_{\gamma_j}\frac{P_d}{r^d}\,\varphi_1^{d-1}\,dw$ and $P_d$ is a polynomial which will be computed explicitly. The \hyperref[lemma:key]{Key lemma} for degree $d$ is completed by the following proposition.

\begin{proposition}\label{prop:key}
 Let $d\geq3$. If $\lambda_1\notin\frac{1}{d-2}\Z$ and the polynomial $P_d(w)$ satisfies a system of equations of the form 
\begin{align}
 a_{21}\,\mathcal{C}_d+\mathcal{I}_{d1} &= 0 \nonumber \\
 a_{22}\,\mathcal{C}_d+\mathcal{I}_{d2} &= 0  \label{eq:sketchKL2}
\end{align}
where $\mathcal{C}_d$ is a complex number,
\begin{equation}\label{eq:defId}
 \mathcal{I}_{dj}=\int_{\gamma_j}\frac{P_d(w)}{r(w)^d}\,\varphi_1(w)^{d-1}\,dw, 
\end{equation}
and $a_{2j}$ is as in \textnormal{(\ref{eq:a_2-v0})} then 
\[ \mathcal{C}_d=\mathcal{I}_{dj}=0. \]
\end{proposition}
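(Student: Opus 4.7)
The plan is to use Lemma \ref{lemma:integrals1} to express $a_{22}$ as an explicit multiple of $a_{21}$ and $\mathcal{I}_{d2}$ as an explicit multiple of $\mathcal{I}_{d1}$. These two multipliers will turn out to be \emph{distinct} precisely when $\lambda_1\notin\tfrac{1}{d-2}\Z$, whereupon the system (\ref{eq:sketchKL2}) will force both $\mathcal{C}_d$ and $\mathcal{I}_{dj}$ to vanish.

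First, from the normalization carried out in Section \ref{sec:normalizations} the first variation has the closed form $\varphi_1(w)=(1+w)^{\lambda_1}(1-w)^{\lambda_2}$. Using $r(w)=-(1-w)(1+w)$, the integrands of $a_{2j}$ and $\mathcal{I}_{dj}$ can be rewritten as $-\zeta_a(w)$ and $(-1)^{d}P_d(w)\zeta_d(w)$ respectively, where
\[
\zeta_a(w)=(1+w)^{\lambda_1-1}(1-w)^{\lambda_2-1},\qquad \zeta_d(w)=(1+w)^{(d-1)\lambda_1-d}(1-w)^{(d-1)\lambda_2-d},
\]
both normalized so that $\zeta(0)=1$. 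Applying Lemma \ref{lemma:integrals1} with $P\equiv -1$ and with $P=(-1)^d P_d$ respectively, and using $e^{-2\pi i d}=1$, yields
\[
a_{22}=\rho\,a_{21},\qquad \mathcal{I}_{d2}=\sigma\,\mathcal{I}_{d1},\qquad \text{where }\rho:=1+e^{2\pi i\lambda_1},\;\;\sigma:=1+e^{2\pi i(d-1)\lambda_1}.
\]

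Next, $\rho=\sigma$ is equivalent to $e^{2\pi i(d-2)\lambda_1}=1$, i.e.\ to $\lambda_1\in\tfrac{1}{d-2}\Z$, and is therefore excluded by hypothesis. Substituting the two multiplicative relations above into the system (\ref{eq:sketchKL2}) produces the homogeneous linear system
\[
\begin{pmatrix} a_{21} & 1\\ \rho\,a_{21} & \sigma\end{pmatrix}\begin{pmatrix} \mathcal{C}_d\\ \mathcal{I}_{d1}\end{pmatrix}=\begin{pmatrix} 0\\ 0\end{pmatrix},
\]
whose determinant is $(\sigma-\rho)\,a_{21}$. By genericity assumption (iii) the quadratic coefficient $a_{21}$ of the distinguished parabolic germ $f_1$ is nonzero, and $\sigma-\rho\neq 0$ by the previous step; hence the determinant is nonzero and the only solution is $\mathcal{C}_d=\mathcal{I}_{d1}=0$. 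The relation $\mathcal{I}_{d2}=\sigma\,\mathcal{I}_{d1}$ then gives $\mathcal{I}_{d2}=0$.

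The main obstacle is preparatory rather than conceptual: one must justify the closed form of $\varphi_1$ produced by the normal form, verify that the resulting products $\varphi_1/r$ and $\varphi_1^{d-1}/r^d$ genuinely fall under the scope of Lemma \ref{lemma:integrals1} with $\zeta(0)=1$, and correctly identify the exponents so that $\lambda_1\notin\tfrac{1}{d-2}\Z$ is precisely the arithmetic condition separating $\rho$ from $\sigma$. Once this bookkeeping is in place, the proposition reduces to the one-line $2\times 2$ linear algebra argument above.
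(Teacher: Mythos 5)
Your proof is correct and follows essentially the same route as the paper: apply Lemma \ref{lemma:integrals1} to get $a_{22}=(1+\nu_1)a_{21}$ and $\mathcal{I}_{d2}=(1+\nu_1^{d-1})\mathcal{I}_{d1}$ with $\nu_1=e^{2\pi i\lambda_1}$, then observe that the resulting $2\times 2$ system has determinant $a_{21}\nu_1(\nu_1^{d-2}-1)\neq 0$ under the stated hypotheses. Your extra care with the sign $(-1)^d$ coming from $r(w)=-(1-w)(1+w)$ is a harmless refinement the paper elides.
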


\begin{proof}
We can regard (\ref{eq:sketchKL2}) as a linear system on three unknowns: $\mathcal{C}_d$, $\mathcal{I}_{d1}$ and $\mathcal{I}_{d2}$. Note that the integrand that appears in (\ref{eq:defId}) can be rewritten as $P_d(w)\zeta_d(w)$, where
\[\zeta_d(w)=\frac{1}{r(w)^d}\varphi_1(w)^{d-1}=(1+w)^{(d-1)\lambda_1-d}(1-w)^{(d-1)\lambda_2-d}, \]
since $\varphi_1(w)=(1+w)^{\lambda_1}(1-w)^{\lambda_2}$. Applying Lemma \ref{lemma:integrals1} we can express $\mathcal{I}_{d2}$ as a scalar multiple of $\mathcal{I}_{d1}$,
\[ \mathcal{I}_{d2}=(1+\nu_1^{d-1})\,\mathcal{I}_{d1}, \quad \nu_1=\exp{(2\pi i\lambda_1)}. \]
Since $a_{2j}$ is given in terms of the integral in (\ref{eq:a_2-v0}), Lemma \ref{lemma:integrals1} also implies that
\[  a_{22}=(1+\nu_1)\,a_{21}. \]
In this way system (\ref{eq:sketchKL2}) becomes
\begin{align}\label{eq:sketchKL3}
 a_{21}\,\mathcal{C}_d+\mathcal{I}_{d1} &= 0, \nonumber \\
 (1+\nu_1)\,a_{21}\,\mathcal{C}_d+(1+\nu_1^{d-1})\,\mathcal{I}_{d1} &= 0,
\end{align}
whose unknowns are $\mathcal{C}_d$ and $\mathcal{I}_{d1}$. The determinant of this system is
\[\begin{vmatrix}
    a_{21} & 1\\
    (1+\nu_1)\,a_{21} & 1+\nu_1^{d-1}
  \end{vmatrix} = a_{21}\nu_1(\nu_1^{d-2}-1), \]
which is not zero. Indeed, $\nu_1^{d-2}\neq 1$ since $\nu_1=\exp{(2\pi i\lambda_1)}$ and $\lambda_1\notin\frac{1}{d-2}\Z$, and by our genericity assumptions $a_{21}\neq 0$. This implies that $\mathcal{I}_{d1}=0$ and $\mathcal{C}_d=0$. 
\end{proof}

Note that the fact that $\mathcal{I}_{d1}=0$ proves the \hyperref[lemma:key]{Key lemma} for degree $d$ since the expression for $\mathcal{I}_d$ given in (\ref{eq:defId}) coincides with the lefthand side of (\ref{eq:keylemma}) in the \hyperref[lemma:key]{Key lemma}. On the other hand, $\mathcal{C}_d$ is given in terms of $h_2,\ldots h_{d-1}$ and so the fact that $\mathcal{C}_d=0$ allows us to find an expression for the coefficient $h_{d-1}$. In this way we are able to repeat the process now for degree $d+1$. That is, at every step $d$ we will prove the \hyperref[lemma:key]{Key lemma} for degree $d$ and compute $h_{d-1}$.

\subsection{Deducing Main lemma from Key lemma}\label{subsec:keytomain}

As pointed out in Remark \ref{rmk:dependence}, the equation
\begin{equation}\label{eq:keylemmabis}
\int_{\gamma_1}\frac{P_d}{r^d}\varphi_1^{d-1}\,dw=0
\end{equation}
imposes one linear condition on the coefficients of the polynomial $P_d(w)$. Since these coefficients are polynomials on $\beta$, we need only prove that this linear condition is non--trivial to conclude the \hyperref[lemma:main]{Main Lemma}. We prove this fact using Lemma \ref{lemma:Pyartli}. Indeed, Lemma \ref{lemma:Pyartli} claims that equation (\ref{eq:keylemmabis}) is equivalent to the existence of a polynomial $R_d(w)$ such that
\[ \int_0^w \frac{P_d}{r^d}\varphi_1^{d-1}\,dt=\frac{R_d(w)}{r(w)^{d-1}}\,\varphi_1(w)^{d-1}+C. \]
This means that
\[ \left(\frac{R_d(w)}{r(w)^{d-1}}\,\varphi_1(w)^{d-1}\right)^{\prime}=\frac{P_d(w)}{r(w)^d}\,\varphi_1(w)^{d-1}, \]
on the other hand a short computation shows that (cf.~the variation equation (\ref{eq:vareq1}) in Section \ref{sec:normalizations}),
\[ \left(\frac{R_d(w)}{r(w)^{d-1}}\,\varphi_1(w)^{d-1}\right)^{\prime}=\frac{R'_d(w)r(w)+(d-1)(s(w)-r'(w))R_d(w)}{r(w)^{d}}\,\varphi_1(w)^{d-1}, \]
where $s(w)=\lambda_1(w-1)+\lambda_2(w+1)$. This implies that
\begin{equation}\label{eq:P_dR_d}
 P_d=R'_dr+(d-1)(s-r')R_d. 
\end{equation}
We will see in Subsection \ref{subsec:mainlemmarevisited} that the polynomials $P_d$ have degree $2(d-1)$ and that $
\deg{R_d}\leq\deg{P_d}-1$. This fact, together with equation (\ref{eq:P_dR_d}), implies that the linear condition imposed on the coefficients of $P_d(w)$ by equation (\ref{eq:keylemmabis}) is non--trivial. The \hyperref[lemma:main]{Main Lemma} now follows immediately.

\begin{remark}
In Subsection \ref{subsec:computingFd} we will explain how to obtain explicit expressions for the polynomials $R_d(w)$ and $F_d(\beta)$ in terms of the coefficients of the polynomials $P_d(w)$. These will be later needed in order to prove the \hyperref[lemma:elimination]{Elimination Lemma}.
\end{remark}

\subsection{The Elimination lemma}\label{subsec:eliminationlemma}

The last step in the proof of Theorem \ref{thm:main} is to prove that the system
\[F_3(\beta)=0,\,\ldots\, ,F_6(\beta)=0, \]
has no solutions other than $\beta=\alpha$. This is done taking resultants of the polynomials $F_d$ with respect to successive variables $\beta_2,\beta_1,\beta_0$. Consider for the time being the parameters $\lambda$, $\alpha$ to be fixed, thus the coefficients of of the polynomials $F_d$ are also fixed complex numbers. 

Recall that if $f(x)=a_0x^n+\ldots+a_n$ and $g(x)=b_0x^m+\ldots+b_m$ are polynomials in $x$ with coefficients in some field $\mathbb{F}$, the resultant of $f$ and $g$ is defined to be
\[\Res_x(f(x),g(x))=a_0^m b_0^n \prod_{i,j}(u_i-v_j), \]
where $u_i$ and $v_j$ are the roots of $f(x)$ and $g(x)$, respectively, in $\overline{\mathbb{F}}$. The resultant can be defined for polynomials over any commutative ring. Over an integral domain it has the fundamental property that $\Res_x(f(x),g(x))=0$ if and only if $f(x)$ and $g(x)$ have a common factor of positive degree.

We will first take several resultants of the polynomials $F_d$ with respect to $\beta_2$. Second, we take resultants of these previously obtained resultants with respect to $\beta_1$. The final step has a twist; if we take now a last resultant with respect to $\beta_0$ we are guaranteed to get 0, since $\beta=\alpha$ is a solution to system (\ref{eq:polysyst}). We avoid this by dividing one of these resultants by the linear polynomial $\beta_0-\alpha_0$. More precisely, let us define
\begin{align*}
 \operatorname{Res}^1_j(\beta_0,\beta_1) &= \Res_{\beta_2}\big(F_3(\beta_0,\beta_1,\beta_2),F_j(\beta_0,\beta_1,\beta_2)\big), & j &= 4,5,6, \\
 \operatorname{Res}^2_j(\beta_0) &= \Res_{\beta_1}\big(\operatorname{Res}^1_4(\beta_0,\beta_1),\operatorname{Res}^1_j(\beta_0,\beta_1)\big), & j &= 5,6, \\
 \operatorname{Res}^3_6   &= \Res_{\beta_0}\big(\operatorname{Res}^2_5(\beta_0)/(\beta_0-\alpha_0),\operatorname{Res}^2_6(\beta_0)\big). & &
\end{align*}
Note that as long as we fix $\alpha$ and $\lambda$ we have that
\[ \operatorname{Res}^1_j\in\C[\beta_0,\beta_1],\quad \operatorname{Res}_j^2\in\C[\beta_0],\quad \operatorname{Res}^3_6\in\C. \]

\begin{proposition}\label{prop:elimination1}
 If $\operatorname{Res}^3_6\neq0$ then any solution $(u_0,u_1,u_2)$ of the polynomial system \textnormal{(\ref{eq:polysyst})} satisfies $u_0=\alpha_0$.
\end{proposition}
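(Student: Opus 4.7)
The plan is to prove the contrapositive using a three-level iteration of the principle that the resultant of two univariate polynomials sharing a common root must vanish. I would suppose $(u_0, u_1, u_2) \in \C^3$ is a solution of system (\ref{eq:polysyst}) with $u_0 \neq \alpha_0$ and derive $\operatorname{Res}^3_6 = 0$.

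The first step would be to climb the resultant tower. Since $F_3(u_0, u_1, u_2) = F_j(u_0, u_1, u_2) = 0$ for $j = 4, 5, 6$, the univariate polynomials $F_3(u_0, u_1, \beta_2)$ and $F_j(u_0, u_1, \beta_2) \in \C[\beta_2]$ share the common root $u_2$. This forces the specialization of their $\beta_2$-resultant to vanish, i.e.\ $\operatorname{Res}^1_j(u_0, u_1) = 0$ for $j = 4, 5, 6$. Applying the same principle one level down, $\operatorname{Res}^1_4(u_0, \beta_1)$ and $\operatorname{Res}^1_j(u_0, \beta_1)$ share the root $u_1$, yielding $\operatorname{Res}^2_j(u_0) = 0$ for $j = 5, 6$.

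Next I would exploit the known solution $\beta = \alpha$, which is the very reason for dividing by $\beta_0 - \alpha_0$ in the definition of $\operatorname{Res}^3_6$. The same iterated argument applied to $\beta = \alpha$ gives $\operatorname{Res}^2_5(\alpha_0) = 0$, so $(\beta_0 - \alpha_0)$ divides $\operatorname{Res}^2_5(\beta_0)$ in $\C[\beta_0]$ and $Q(\beta_0) := \operatorname{Res}^2_5(\beta_0)/(\beta_0 - \alpha_0)$ is a genuine polynomial. Since $u_0 - \alpha_0 \neq 0$ by assumption, the equality $\operatorname{Res}^2_5(u_0) = 0$ forces $Q(u_0) = 0$; combined with $\operatorname{Res}^2_6(u_0) = 0$, this exhibits $u_0$ as a common root of $Q$ and $\operatorname{Res}^2_6$, and therefore $\operatorname{Res}^3_6 = \Res_{\beta_0}(Q, \operatorname{Res}^2_6) = 0$.

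The main technical point to keep in mind is the specialization property of resultants: the identity $\phi(\Res_x(f,g)) = \Res_x(\phi(f), \phi(g))$ can fail when leading coefficients drop under $\phi$, but the weaker statement I actually need — that a common root of the specialized polynomials forces the specialized resultant to vanish — holds unconditionally, because a common root produces a common factor of positive degree and hence a non-trivial kernel in the specialized Sylvester matrix, whose determinant is by construction the specialized resultant. I therefore do not expect any real obstacle beyond this formal bookkeeping; the proof is essentially a three-level unwinding of elimination theory, with the division by $\beta_0 - \alpha_0$ serving precisely to remove the unavoidable contribution of the trivial solution $\beta = \alpha$.
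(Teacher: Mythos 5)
Your proof is correct and follows essentially the same route as the paper: climb the resultant tower using the fact that a common root of the specialized polynomials forces the specialized resultant to vanish, then use $u_0\neq\alpha_0$ to pass from $\operatorname{Res}^2_5(u_0)=0$ to $\big(\operatorname{Res}^2_5(\beta_0)/(\beta_0-\alpha_0)\big)\big|_{\beta_0=u_0}=0$ and conclude $\operatorname{Res}^3_6=0$. Your two added remarks — that $\beta=\alpha$ being a solution guarantees $(\beta_0-\alpha_0)$ divides $\operatorname{Res}^2_5(\beta_0)$ so the quotient is a genuine polynomial, and that the specialization-versus-resultant subtlety is harmless here — are points the paper leaves implicit, so they strengthen rather than alter the argument.
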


\begin{proof}
 Suppose on the contrary that $\operatorname{Res}^3_6\neq 0$ but $(u_0,u_1,u_2)$ is a solution of (\ref{eq:polysyst}) such that $u_0\neq\alpha_0$. Note that $F_3(u_0,u_1,\beta_2)$ and $F_j(u_0,u_1,\beta_2)$ have a common root $\beta_2=u_2$ for any $j=4,5,6$ and so
\[0=\Res_{\beta_2}\big(F_3(u_0,u_1,\beta_2),F_j(u_0,u_1,\beta_2)\big)=\operatorname{Res}^1_j(u_0,u_1), \qquad j=4,5,6. \]
In particular $\operatorname{Res}^1_4(u_0,\beta_1)$ has a common root, $\beta_1=u_1$, with both $\operatorname{Res}_5^1(u_0,\beta_1)$ and $\operatorname{Res}_6^1(u_0,\beta_1)$. We deduce that $\operatorname{Res}_5^2(u_0)=0$ and $\operatorname{Res}_6^2(u_0)=0$. Now, since $u_0\neq \alpha_0$ it is still true that $\operatorname{Res}_5^2(\beta_0)/(\beta_0-\alpha_0)$ and $\operatorname{Res}_6^2(\beta_0)$ have $\beta_0=u_0$ as a common root; in particular, $\operatorname{Res}^3_6=0$, a contradiction.
\end{proof}

We would like to be able to guarantee that $\operatorname{Res}^3_6$ is never zero, no matter the choice of $\lambda$ and $\alpha$. This need not be true. However, we can guarantee that for \emph{almost every} choice of $\lambda$ and $\alpha$ the resultant $\operatorname{Res}^3_6$ is not zero. Indeed, as mentioned in Remark \ref{rmk:dependence}, the coefficients of the polynomials $F_d$ depend polynomially on $\alpha$ and rationally on $\lambda$. In this way, if we allow $\alpha$ and $\lambda$ to vary, the coefficients of $F_d$ belong to the ring $\C(\lambda)[\alpha]$, in particular $\operatorname{Res}^3_6\in\C(\lambda)[\alpha]$. Let us thus introduce the notation $\operatorname{Res}^3_6(\lambda,\alpha)$. If $\operatorname{Res}^3_6(\lambda,\alpha)$ is not identically zero then the union of its divisors of zeroes and poles defines a proper algebraic subset of affine space $\C^5$. The complement $U$ of this algebraic set is a Zariski--open subset of $\C^5$ with the property that for any $(\lambda,\alpha)\in U$ we have $\operatorname{Res}^3_6(\lambda,\alpha)\neq 0$. Finally we will prove that $\operatorname{Res}^3_6(\lambda,\alpha)\not\equiv 0$ by exhibiting an explicit point $(\lambda,\alpha)\in\C^5$, given in (\ref{values}), for which $\operatorname{Res}^3_6$ does not vanish. 

The above argument shows that if $\F=\F(\lambda,\alpha)$ and $\Ft=\F(\lambda,\beta)$ have conjugate monodromy groups, then we must have $\alpha_0=\beta_0$. The polynomial $F_3(\beta)$ is linear and $F_4(\beta)$ is linear on $\beta_1,\beta_2$ yet quadratic on $\beta_0$. However, if we replace $\beta_0$ by $\alpha_0$ we obtain a linear system on $\beta_1,\beta_2$ (this is verified by direct inspection of the polynomials $F_3$ and $F_4$ which can be found in the \hyperref[sec:appendix]{Appendix}). The proof of the \hyperref[lemma:elimination]{Elimination lemma} is completed by the following proposition.

\begin{proposition}\label{prop:elimination2}
The pair of equations
\begin{equation}\label{eq:elimination2}
F_3(\alpha_0,\beta_1,\beta_2)=0,\qquad F_4(\alpha_0,\beta_1,\beta_2)=0,
\end{equation}
forms a linear inhomogeneous system on $\beta_1$ and $\beta_2$. Its determinant is a non--zero element of $\C(\lambda)[\alpha]$ and therefore for almost every $(\lambda,\alpha)\in\C^5$ the system has a unique solution which is necessarily given by 
\[ \beta_1=\alpha_1, \qquad \beta_2=\alpha_2. \]
\end{proposition}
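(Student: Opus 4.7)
My plan is to confirm the structural claim by direct inspection of $F_3$ and $F_4$, then reduce the uniqueness statement to the non--vanishing of a single determinant, and finally verify that determinant at a concrete sample point. I expect this last step to be the main obstacle.

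First, I would turn to the explicit formulas for $F_3$ and $F_4$ collected in the \hyperref[sec:appendix]{Appendix}. By the construction in Subsection \ref{subsec:mainlemmarevisited}, each $F_d$ comes from a non--trivial linear condition on the coefficients of the polynomial $P_d(w)$, whose coefficients depend polynomially on $\beta$. A direct look at these formulas shows that $F_3$ is affine--linear in the whole vector $\beta = (\beta_0,\beta_1,\beta_2)$, while $F_4$ is affine--linear in $\beta_1,\beta_2$ but quadratic in $\beta_0$. After substituting $\beta_0 = \alpha_0$, both $F_3(\alpha_0,\beta_1,\beta_2)$ and $F_4(\alpha_0,\beta_1,\beta_2)$ become affine--linear in the remaining unknowns $(\beta_1,\beta_2)$, so (\ref{eq:elimination2}) may be rewritten as
\[ M(\lambda,\alpha) \begin{pmatrix} \beta_1 \\ \beta_2 \end{pmatrix} = b(\lambda,\alpha), \]
where the entries of $M$ and $b$ lie in $\C(\lambda)[\alpha]$. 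This already establishes the first claim of the proposition.

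Next, I would observe that $F_d(\alpha)=0$ by the very construction of $F_d$ in the \hyperref[lemma:main]{Main Lemma}, so $(\alpha_1,\alpha_2)$ is always a solution of the above linear system. Consequently, the uniqueness part of the conclusion is equivalent to the non--vanishing of $D(\lambda,\alpha) := \det M(\lambda,\alpha) \in \C(\lambda)[\alpha]$. Assuming $D \not\equiv 0$, the union of the divisors of zeros and of poles of $D$ is a proper Zariski--closed subset of $\C^5$; on its complement $M$ is a well--defined invertible matrix over $\C$ and the only solution of (\ref{eq:elimination2}) is forced to be $(\beta_1,\beta_2)=(\alpha_1,\alpha_2)$.

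The main obstacle is to rule out the possibility $D \equiv 0$. My strategy, completely analogous to the one already used for $\operatorname{Res}^3_6$ in Subsection \ref{subsec:eliminationlemma}, is to exhibit a single explicit point $(\lambda^\ast,\alpha^\ast) \in \C^5$ at which $D$ is defined and non--zero, thereby proving that $D$ is not the zero element of $\C(\lambda)[\alpha]$. The natural first candidate is the explicit sample point (\ref{values}) already used to verify $\operatorname{Res}^3_6 \not\equiv 0$; if $D$ happens to vanish there, a small symbolic perturbation of the parameters will do. Once such a point is exhibited, the proposition follows, and the Zariski--open set $\mathcal{U}$ required in the \hyperref[lemma:elimination]{Elimination Lemma} can be taken as the intersection of $\{\operatorname{Res}^3_6 \neq 0\}$ with $\{D \neq 0\}$, completing the argument together with Proposition \ref{prop:elimination1}.
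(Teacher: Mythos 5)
Your proposal is correct and follows essentially the same route as the paper: direct inspection of the computer-generated $F_3$, $F_4$ to see that the system becomes linear in $(\beta_1,\beta_2)$ after setting $\beta_0=\alpha_0$, reduction of uniqueness to the non-vanishing of the determinant in $\C(\lambda)[\alpha]$ using the fact that $\beta=\alpha$ is always a solution, and certification of non-vanishing by evaluation at a single point. The paper in fact verifies the determinant at exactly the sample point (\ref{values}) you name as your first candidate, so no perturbation is needed.
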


The proof of this proposition is discussed in Subsection \ref{subsec:concludingelimination}. Propositions \ref{prop:elimination1} and \ref{prop:elimination2} together imply the \hyperref[lemma:elimination]{Elimination lemma}.

\begin{remark}
In the proof of the \hyperref[lemma:main]{Main lemma} and \hyperref[lemma:key]{Key lemma} all computations are carried out in terms of the rational functions $K_d(w)$ defined by the formula
\[ \frac{dz}{dw}=\frac{z\,P(z,w)}{Q(z,w)}=\sum_{d=1}^{\infty}K_d(w)\,z^d, \]
whose explicit dependence on $(\lambda,\alpha)$ is not provided until Section \ref{sec:elimination}. This has been done to avoid excessively large expressions and make the proof more transparent. However, in order to prove the \hyperref[lemma:elimination]{Elimination lemma} (more precisely, that the final resultant $\operatorname{Res}^3_6$ and the determinant of (\ref{eq:elimination2}) do not vanish identically) we do need to compute expressions for the polynomials $F_d$ in terms of the parameters $\lambda,\alpha,\beta$ explicitly. Obtaining these expressions and evaluating the resultant $\operatorname{Res}^3_6$ and the determinant of (\ref{eq:elimination2}) at a particular point has been done with computer assistance using \emph{Mathematica 9} \cite{Mathematica9}. This procedure is discussed in Section \ref{sec:elimination} and the program script is presented in the \hyperref[sec:appendix]{Appendix}.
\end{remark}

\section{Definitions and normalizations}\label{sec:normalizations}

A foliation $\F\in\A_2$ has three singular points at infinity. These can be brought to any other three different points on the infinite line by the action of the affine group of $\C^2$. We wish to normalize a foliation in such a way that the singular points are given by $w_1=-1$, $w_2=1$ and $w_3=\infty$ in coordinates $(z,w)=(1/x,\,y/x)$. If the characteristic numbers are pairwise different we can do this unambiguously by numbering the singular points in such a way that $\Re{\lambda_1}\geq\Re{\lambda_2}\geq\Re{\lambda_3}$ and if $\Re{\lambda_i}=\Re{\lambda_j}$ then $\Im{\lambda_i}>\Im{\lambda_j}$ provided $i<j$. 

Since the characteristic numbers are not integer numbers it follows from \cite{Pyartli2000} that we can find an affine change of coordinates such that in the chart $(z,w)$ the foliation is induced by 
\[ \frac{dz}{dw}=z\,\frac{s(w)(1+\alpha_0 z)+\kappa z+\eta z^2}{r(w)(1+\alpha_0\sigma z)+p(w)z^2}, \]
where $r(w)=w^2-1$, $s(w)=\lambda_1(w-1)+\lambda_2(w+1)$, $p(w)=\alpha_1(w-1)+\alpha_2(w+1)$, $\sigma=\lambda_1+\lambda_2$ and $\eta=\alpha_1+\alpha_2$.

It follows from \cite{Pyartli2006} that if $\lambda_1,\lambda_2\notin\Z$ then the parameter $\kappa$ above is non--zero, provided that the germ $f_1$ constructed in Definition \ref{def:geometricgenerators} as the commutator of the holonomy maps along the standard geometric generators has non--zero quadratic part. Moreover, if $\kappa\neq 0$ we can further normalize the above equation in such a way that $\kappa=1$. By one of our genericity hypotheses, $f_1$ has a non--trivial quadratic part and moreover this property also holds for any foliation whose holonomy group is analytically conjugate to that of $\F$. Therefore all foliations considered in this work may be normalized in such a way that $\kappa=1$. We arrive to the following normal form:
\begin{equation}\label{eq:normalform}
 \frac{dz}{dw}=z\,\frac{s(w)(1+\alpha_0 z)+z+\eta z^2}{r(w)(1+\alpha_0\sigma z)+p(w)z^2}
\end{equation}
In this way any generic foliation $\F\in\A_2$ is uniquely defined by five complex parameters: $\lambda_1$, $\lambda_2$, $\alpha_0$, $\alpha_1$, $\alpha_2$. We write $\F=\F(\lambda,\alpha)$ to emphasize this fact. In what follows $\Ft$ will denote a foliation from $\A_2$ whose holonomy group at infinity is analytically equivalent to that of $\F$. We deduce from such conjugacy and from the non--solvability of the holonomy groups that $\Ft$ has the same characteristic numbers at infinity. Therefore we may write $\Ft=\F(\lambda,\beta)$ where $\beta\in\C^3$.

Let us denote the right hand side of (\ref{eq:normalform}) by $\Psi(z,w)$. The rational function $\Psi$ has a power series expansion with respect to $z$ of the form
\begin{equation}\label{eq:expansionF}
\Psi(z,w)=\sum_{d=1}^{\infty}K_d(w)\,z^d,
\end{equation}
where $K_d$ is a rational function in $w$. Since $\Psi(0,w)$ has denominator $r(w)$ we can expect that the rational functions $K_d(w)$ to have $r(w)$ to some power as denominator. We will see in Proposition \ref{prop:c_dS_d} that this is in fact the case and that moreover such power can always be taken to be equal to $d$.

In particular the first coefficient $K_1(w)$ is the rational function
\[ K_1(w)=\frac{s(w)}{r(w)}=\frac{\lambda_1}{w+1}+\frac{\lambda_2}{w-1}. \]
The first variation of the solution $z=0$ to equation (\ref{eq:normalform}) satisfies the linear equation
\begin{equation}\label{eq:vareq1}
 \frac{d\varphi_1}{dw}=K_1(w)\,\varphi_1, \quad \varphi_1(0)=1,
\end{equation}
and so $\varphi_1(w)=(1+w)^{\lambda_1}(1-w)^{\lambda_2}$. The higher variations $\varphi_d$, $d\geq 2$, satisfy an inhomogeneous linear equation whose associate homogeneous equation is \textnormal{(\ref{eq:vareq1})}:
\[ \frac{d\varphi_d}{dw}=K_1(w)\,\varphi_d+b_d(w), \quad \varphi_d(0)=0. \]
Let us write $B_d(t)=\varphi_1(t)^{-1}\,b_d(t)$ so that the solution to the above equation is given by
\[ \varphi_d(w)=\varphi_1(w)\int_{0}^{w}B_d(t)\,dt. \]
Let us define $\phi_d(w)=\int_{0}^{w}B_d(t)\,dt$ and call these functions the \emph{reduced variations}. In this way $\varphi_d=\varphi_1\phi_d$. The non--linear terms $b_d(w)$ are well known for an equation of the form (\ref{eq:ratdifeq0}). The following proposition gives an explicit expression for $B_d=\varphi_1^{-1}\,b_d$.

\begin{proposition}\label{prop:formulasB_d}
The functions $B_d$ defined above are given by the following formulas:
\begin{align*}
B_2 & = K_2\varphi_1, \\
B_3 & = 2K_2\phi_2\varphi_1 + K_3\varphi_1^2, \\
B_4 & = K_2(2\phi_3\varphi_1+\phi_2^2\varphi_1) + 3K_3\phi_2\varphi_1^2 + K_4\varphi_1^3, \\
B_5 & = 2K_2(\phi_4\varphi_1+\phi_3\phi_2\varphi_1) + 3K_3(\phi_3\varphi_1^2+\phi_2^2\varphi_1^2) + 4K_4\phi_2\varphi_1^3 + K_5\varphi_1^4, \\
B_6 & = K_2(2\phi_5\varphi_1+2\phi_4\phi_2\varphi_1+\phi_3^2\varphi_1) + K_3(3\phi_4\varphi_1^2+6\phi_3\phi_2\varphi_1^2+\phi_2^3\varphi_1^2)\\ 
    &\phantom{=}+K_4(4\phi_3\varphi_1^3+6\phi_2^2\varphi_1^3) + 5K_5\phi_2\varphi_1^4 + K_6\varphi_1^5.
\end{align*}
\end{proposition}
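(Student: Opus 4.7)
The plan is to substitute the formal power series expansion of the solution $\Phi(z,w)=\sum_{d\geq 1}\varphi_d(w)z^d$ directly into the ODE $\Phi'=\Psi(\Phi,w)$ and match coefficients of $z^n$. Using the expansion $\Psi(z,w)=\sum_{k\geq 1}K_k(w)z^k$ from (\ref{eq:expansionF}) and the multinomial formula for $\Phi^k$, I obtain
\[
\varphi_n'(w)=\sum_{k=1}^{n}K_k(w)\sum_{\substack{d_1+\cdots+d_k=n\\ d_i\geq 1}}\varphi_{d_1}(w)\cdots\varphi_{d_k}(w).
\]
Isolating the $k=1$ contribution $K_1(w)\varphi_n(w)$ reproduces the linear part of the ODE for $\varphi_n$ (compatible with the variation equation (\ref{eq:vareq1}) for $\varphi_1$), and the remaining terms are precisely $b_n(w)$.

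The next step is to rewrite this in terms of the reduced variations. Adopting the convention $\phi_1\equiv 1$, the identity $\varphi_d=\varphi_1\phi_d$ holds for every $d\geq 1$. Substituting this into each $k$-fold product pulls out a factor $\varphi_1^k$, hence
\[
B_n=\varphi_1^{-1}b_n=\sum_{k=2}^{n}K_k\,\varphi_1^{k-1}\sum_{\substack{d_1+\cdots+d_k=n\\ d_i\geq 1}}\phi_{d_1}\cdots\phi_{d_k}.
\]
This master formula specialises to the five stated identities for $n=2,3,4,5,6$.

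The final step is pure bookkeeping: for each $n\in\{2,\ldots,6\}$ and each $k\in\{2,\ldots,n\}$, enumerate the ordered compositions of $n$ into $k$ positive parts, collect monomials in the $\phi_j$'s using $\phi_1=1$, and check agreement with the statement. For example, with $n=6$ and $k=3$ the unordered partitions $\{1,1,4\}$, $\{1,2,3\}$ and $\{2,2,2\}$ have $3$, $6$ and $1$ orderings and thus contribute $3\phi_4$, $6\phi_3\phi_2$ and $\phi_2^3$, yielding the summand $K_3(3\phi_4+6\phi_3\phi_2+\phi_2^3)\varphi_1^2$ displayed in the formula for $B_6$; the remaining cases are handled identically. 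There is no real obstacle beyond the combinatorial counting of compositions, and the proposition follows.
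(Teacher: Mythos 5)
Your proposal is correct: the master formula
\[
B_n=\sum_{k=2}^{n}K_k\,\varphi_1^{k-1}\!\!\sum_{\substack{d_1+\cdots+d_k=n\\ d_i\geq 1}}\!\!\phi_{d_1}\cdots\phi_{d_k}
\]
follows from matching coefficients of $z^n$ in $\Phi'=\Psi(\Phi,w)$ and factoring out $\varphi_1^k$ via $\varphi_d=\varphi_1\phi_d$, and your composition counts for $n=2,\dots,6$ all check out. The paper states this proposition without proof (the nonlinear terms $b_d$ are dismissed as "well known"), so you are simply supplying the standard derivation it omits; there is nothing to compare beyond that.
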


In order to compute the reduced variations $\phi_d(w)=\int_0^wB_d\,dt$ it will be convenient to split each of the rational functions $K_d(w)$ into two pieces, one of these a scalar multiple of $K_1(w)$. Computations are simplified since, in virtue of (\ref{eq:vareq1}), we can compute explicitly an integral of the form $\int_{0}^{w}K_1\varphi_1^m\,dt$.

\begin{definition}\label{def:splittingF}
Given a rational differential equation $\displaystyle\frac{dz}{dw}=\Psi(z,w)$ normalized as in \textnormal{(\ref{eq:normalform})} we define the rational function
\[ C(z,w)=z\frac{s(w)(1+\alpha_0 z)}{r(w)(1+\alpha_0\sigma z)}, \]
where $s(w),r(w),\sigma$ are as in \textnormal{(\ref{eq:normalform})}. We also define $S(z,w)$ by the formula 
\begin{equation}\label{eq:splittingF}
\Psi(z,w)=C(z,w)+S(z,w). 
\end{equation}
\end{definition}

\begin{remark}
It is proved in \cite{Pyartli2000} that a foliation given by 
\[ \frac{dz}{dw}=C(z,w), \]
with $C(z,w)$ as above has a commutative holonomy group. This holonomy group is in fact linearizable but it is not linear unless $\alpha_0=0$.
\end{remark}

Note that
\[ C(z,w)=K_1(w)\vartheta(z), \]
where $\vartheta(z)$ is the rational function $\vartheta(z)=z(1+\alpha_0 z)(1+\alpha_0\sigma z)^{-1}$. 

\begin{proposition}\label{prop:c_dS_d}
The splitting of $\Psi(z,w)$ given in equation \textnormal{(\ref{eq:splittingF})} implies that for each $d\geq1$,
\begin{equation}\label{eq:c_dS_d}
K_d(w)=c_d\,K_1(w)+\frac{S_d(w)}{r(w)^d},
\end{equation}
where the polynomials $S_d(w)$ are given by the formula
\[ S(z,w)=\sum_{d=2}^{\infty}\frac{S_d(w)}{r(w)^{d}}\,z^d, \]
and the constants $c_d$ are given by $\vartheta(z)=\sum_{d=1}^\infty c_dz^d.$
\end{proposition}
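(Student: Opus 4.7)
My plan is to deduce the decomposition by matching coefficients of $z^d$ on both sides of the splitting identity $\Psi(z,w)=C(z,w)+S(z,w)$ given in Definition \ref{def:splittingF}. Since $C(z,w)=K_1(w)\,\vartheta(z)$ and $\vartheta(z)=\sum_{d\geq1}c_d z^d$, expanding $C$ as a power series in $z$ gives
\[ C(z,w)=\sum_{d\geq 1} c_d\,K_1(w)\,z^d, \]
and combining with (\ref{eq:expansionF}) yields
\[ S(z,w)=\Psi(z,w)-C(z,w)=\sum_{d\geq 1}\bigl(K_d(w)-c_d K_1(w)\bigr)z^d. \]
Note that $\vartheta(z)=z+O(z^2)$, so $c_1=1$ and the summand vanishes for $d=1$; hence the series effectively starts at $d=2$, matching the shape of the expansion of $S(z,w)$ stated in the proposition.

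To complete the proof I must verify that each coefficient $K_d(w)-c_d K_1(w)$ can be written as $S_d(w)/r(w)^d$ with $S_d(w)$ a polynomial. Since $K_1(w)=s(w)/r(w)$ has denominator dividing $r(w)^d$ for every $d\geq 1$, this reduces to showing that $K_d(w)$ itself has denominator dividing $r(w)^d$. The plan is to expand $1/D(z,w)$, where $D(z,w)=r(w)(1+\alpha_0\sigma z)+p(w)z^2$ is the denominator of $\Psi$ in the normal form (\ref{eq:normalform}), as a geometric series:
\[ \frac{1}{D(z,w)}=\frac{1}{r(w)(1+\alpha_0\sigma z)}\sum_{k\geq 0}(-1)^k\left(\frac{p(w)\,z^2}{r(w)(1+\alpha_0\sigma z)}\right)^k. \]
The term with index $k$ contributes only to coefficients of $z^m$ with $m\geq 2k$, and it carries a denominator $r(w)^{k+1}$. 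Consequently, the coefficient of $z^m$ in $1/D(z,w)$ has denominator dividing $r(w)^{\lfloor m/2\rfloor+1}$.

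The numerator $N(z,w)=z\bigl[s(w)(1+\alpha_0 z)+z+\eta z^2\bigr]$ of $\Psi$ is a polynomial in $(z,w)$ of $z$-degrees $1,2,3$, so multiplying $1/D$ by $N$ only shifts the $z$-index by a bounded amount. Therefore the coefficient $K_d(w)$ of $z^d$ in $\Psi=N/D$ has denominator dividing $r(w)^{\lfloor(d-1)/2\rfloor+1}$, which divides $r(w)^d$ for every $d\geq 1$. Setting $S_d(w):=r(w)^d\bigl(K_d(w)-c_d K_1(w)\bigr)$ therefore defines a polynomial, and summing over $d\geq 2$ produces the asserted decomposition of $S(z,w)$ together with the identity (\ref{eq:c_dS_d}). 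The only technical step is the bookkeeping of the powers of $r(w)$ arising in the geometric expansion of $1/D$; I do not anticipate any conceptual obstacle beyond this.
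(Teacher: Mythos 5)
Your argument is correct. The decomposition $S=\Psi-C$ with $C=K_1\vartheta$ immediately gives $K_d-c_dK_1$ as the coefficient of $z^d$ in $S$, the observation that $c_1=1$ correctly kills the $d=1$ term, and your geometric-series expansion of $1/D$ with $D=r(1+\alpha_0\sigma z)+pz^2$ gives the right bookkeeping: the $k$-th term contributes only to $z^m$ with $m\geq 2k$ and carries denominator $r^{k+1}$, so the coefficient of $z^{d}$ in $N/D$ has denominator dividing $r^{\lfloor (d-1)/2\rfloor+1}$, which indeed divides $r^d$ for all $d\geq 1$.

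The paper does not actually write out a proof of this proposition; it treats it as a consequence of directly expanding the normal form and exhibits the resulting $c_d$ and $S_d$ explicitly (by computer) for the degrees $d\leq 6$ that are used later, at the beginning of Section \ref{sec:elimination}. Your route is therefore somewhat different in spirit: instead of verifying the claim degree by degree, you give a uniform denominator estimate valid for every $d\geq 1$, which is what the statement of the proposition actually asserts. What the explicit computation buys the paper is the concrete formulas for $S_2,\ldots,S_6$ needed in the Key and Elimination lemmas; what your argument buys is an honest proof of the general claim, and in fact a sharper bound ($r^{\lfloor(d-1)/2\rfloor+1}$ rather than $r^d$) on the denominator of $K_d$. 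No gap.
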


Explicit expressions for $c_d$ and $S_d$ in terms of the parameters $\lambda$ and $\alpha$ are given at the beginning of Section \ref{sec:elimination}.

\begin{remark}\label{rmk:a_dj=phi_d} 
We have expanded the distinguished parabolic germs in power series
\[ f_j(z)=z+a_{2j}z^2+a_{3j}z^3+\ldots. \]
According to (\ref{eq:powerseriesDelta}) we have $a_{dj}=\var{d}{j}$, and we also know that $\var{1}{j}=1$ since the loops $\gamma_1$, $\gamma_2$ are commutators. The equality $\varphi_d=\varphi_1\phi_d$ implies that in fact
\[ a_{dj}=\phi_{d\{\gamma_j\}}(0). \]
This fact will be used in the next section when computing the coefficients $a_{dj}$.
\end{remark}

\section[Analysis of the power series]{Analysis of the power series expansion of the distinguished parabolic germs \texorpdfstring{$f_j$}{}}\label{sec:analysis}

In this section we compute the coefficients $a_{dj}$ in the power series expansion of the distinguished parabolic germ $f_j$. These computations follow very closely computations carried out in \cite{Pyartli2006}. However, in \cite{Pyartli2006} it is assumed that the holonomy group at infinity of the foliation in question is solvable, and thus several simplifications take place. The computations provided here are completely general.

\subsection{Analysis of the terms of low degree}

\begin{proposition}\label{prop:secondvar}
The reduced second variation is given by
\[ \phi_2(w)=c_2(\varphi_1(w)-1)+\psi_2(w), \]
where 
\[ \psi_2(w)=\int_{0}^{w}\frac{S_2(t)}{r(t)^2}\,\varphi_1(t)\,dt, \]
and $c_2$, $S_2$ are as in Proposition \textnormal{\ref{prop:c_dS_d}}. In particular we have
\[ a_{2j}=\psi_{2j} \quad\text{with}\quad \psi_{2j}=\int_{\gamma_j}\frac{S_2(w)}{r(w)^2}\,\varphi_1(w)\,dw, \quad j=1,2. \]
\end{proposition}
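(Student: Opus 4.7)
The strategy is straightforward and uses only the three ingredients already available: the formula $B_2=K_2\varphi_1$ from Proposition~\ref{prop:formulasB_d}, the splitting $K_d=c_dK_1+S_d/r^d$ from Proposition~\ref{prop:c_dS_d}, and the fact that $\gamma_j$ is a commutator loop in $\pi_1(\LF,0)$.

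First I would write out the reduced second variation directly from its definition $\phi_2(w)=\int_0^w B_2(t)\,dt$. Inserting $B_2=K_2\varphi_1$ and then substituting the splitting $K_2=c_2K_1+S_2/r^2$ yields
\[ \phi_2(w)=c_2\int_0^w K_1(t)\varphi_1(t)\,dt+\int_0^w\frac{S_2(t)}{r(t)^2}\,\varphi_1(t)\,dt. \]
The first integral is explicit: the variation equation (\ref{eq:vareq1}) says exactly that $K_1\varphi_1=\varphi_1'$, so $\int_0^w K_1\varphi_1\,dt=\varphi_1(w)-\varphi_1(0)=\varphi_1(w)-1$. Calling the second integral $\psi_2(w)$ produces the claimed decomposition $\phi_2=c_2(\varphi_1-1)+\psi_2$.

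For the second assertion, I would use Remark~\ref{rmk:a_dj=phi_d}, according to which $a_{2j}=\phi_{2\{\gamma_j\}}(0)$. Because analytic continuation along a loop distributes over both summands, this equals $c_2\bigl(\varphi_{1\{\gamma_j\}}(0)-1\bigr)+\psi_{2\{\gamma_j\}}(0)$. Here the commutator structure of $\gamma_j$ enters: $\varphi_1(w)=(1+w)^{\lambda_1}(1-w)^{\lambda_2}$ is a character of $\pi_1(\LF,0)$ up to the value at the basepoint, so its monodromy around each $\mu_i$ is multiplication by a constant, and hence any commutator $[\mu_i,\mu_j]$ acts trivially on $\varphi_1$. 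Therefore $\varphi_{1\{\gamma_j\}}(0)=\varphi_1(0)=1$, the term $c_2(\varphi_1-1)$ contributes nothing, and we are left with $a_{2j}=\psi_{2\{\gamma_j\}}(0)=\int_{\gamma_j}\frac{S_2(w)}{r(w)^2}\varphi_1(w)\,dw$, which is the required formula.

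There is essentially no obstacle here; the only thing to watch is to justify carefully that the monodromy of $\varphi_1$ around the commutator $\gamma_j$ is trivial, which follows from the abelian (in fact rank-one scalar) nature of its monodromy representation. All the heavy lifting has already been done in the statements of Propositions~\ref{prop:formulasB_d} and \ref{prop:c_dS_d} and in the linear variation equation; this proposition is the base case of the inductive scheme outlined in Subsection~\ref{subsec:keylemma}, and its clean form is precisely what makes the identity $a_{2j}=\psi_{2j}$ available for use in the Key Lemma at degree $d=3$.
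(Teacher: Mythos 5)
Your proof is correct and follows exactly the paper's argument: expand $\phi_2=\int_0^w B_2\,dt$ with $B_2=K_2\varphi_1$, split $K_2=c_2K_1+S_2/r^2$, and use the variation equation $K_1\varphi_1=\varphi_1'$ to evaluate the first integral, with the ``in particular'' part following from Remark~\ref{rmk:a_dj=phi_d} and the triviality of the monodromy of $\varphi_1$ along the commutator loops $\gamma_j$. No differences worth noting.
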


\begin{proof}
The reduced variation is given by $\phi_2(w)=\int_0^wB_2\,dt$. It follows from Proposition \ref{prop:formulasB_d} and equation (\ref{eq:c_dS_d}) that
\[ \phi_2(w)=\int_{0}^{w} K_2(t)\varphi_1(t)\,dt=\int_{0}^{w} \left(c_2K_1(t)+\frac{S_2(t)}{r(t)^2}\right)\varphi_1(t)\,dt. \]
Note that
\[ \int_{0}^{w}K_1(t)\varphi_1(t)\,dt=\int_0^w\frac{d\varphi_1}{dt}\,dt=\varphi_1(w)-1, \]
and so
\[ \phi_2(w)=c_2(\varphi_1(w)-1)+\int_{0}^{w}\frac{S_2(t)}{r(t)^2}\,\varphi_1(t)\,dt, \]
as claimed.
\end{proof}

\begin{proposition}\label{prop:thirdvar}
The reduced third variation is given by
\[ \phi_3(w)=\phi_2(w)^2\varphi_1(w)+c_3\frac{\varphi_1(w)^2-1}{2}+\psi_3(w), \]
where 
\[ \psi_3(w)=\int_0^w\frac{S_3(t)}{r(t)^3}\varphi_1(t)^2\,dt, \]
and $c_3$, $S_3$ are as in Proposition \textnormal{\ref{prop:c_dS_d}}. In particular 
\[ a_{3j}=a_{2j}^2+\psi_{3j} \quad\text{with}\quad \psi_{3j}=\int_{\gamma_j}\frac{S_3(w)}{r(w)^3}\varphi_1(w)^2\,dw, \quad j=1,2. \]
\end{proposition}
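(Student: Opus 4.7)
The proof will be a direct computation closely parallel to the proof of Proposition 5.1, starting from the explicit formula for $B_3$ in Proposition \ref{prop:formulasB_d} and the splitting of $K_d$ in Proposition \ref{prop:c_dS_d}. The plan is first to compute $\phi_3$ as a function on $\LF$ and then specialize to the loops $\gamma_1,\gamma_2$ to read off $a_{3j}$.

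\textbf{Step 1: integrate $B_3$.} By definition $\phi_3(w)=\int_0^w B_3(t)\,dt$, and by Proposition \ref{prop:formulasB_d} we have $B_3 = 2K_2\phi_2\varphi_1 + K_3\varphi_1^2$. I would split this as
\[
\phi_3(w) \;=\; \int_0^w 2K_2(t)\,\phi_2(t)\,\varphi_1(t)\,dt \;+\; \int_0^w K_3(t)\,\varphi_1(t)^2\,dt,
\]
and treat each integrand separately.

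\textbf{Step 2: the first integral collapses to $\phi_2^2$.} The crucial identity is that $\phi_2' = B_2 = K_2\varphi_1$, which is just the defining relation for the reduced second variation from Proposition \ref{prop:formulasB_d}. Hence the first integrand equals $2\phi_2\,\phi_2' = (\phi_2^2)'$, and integration from $0$ gives $\phi_2(w)^2$ (using $\phi_2(0)=0$). This is the contribution of the quadratic term, matching the first summand in the claimed formula (up to the factor $\varphi_1$, whose role I discuss in Step~4).

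\textbf{Step 3: the second integral splits via Proposition \ref{prop:c_dS_d}.} Write $K_3 = c_3K_1 + S_3/r^3$. The term involving $S_3/r^3$ is by definition $\psi_3(w)$. For the $c_3K_1$ term, use the variation equation (\ref{eq:vareq1}), $K_1\varphi_1 = \varphi_1'$, so $K_1\varphi_1^2 = \varphi_1\varphi_1' = \tfrac{1}{2}(\varphi_1^2)'$, which integrates to $c_3(\varphi_1(w)^2-1)/2$. This yields
\[
\phi_3(w) \;=\; \phi_2(w)^2 \;+\; c_3\,\frac{\varphi_1(w)^2-1}{2} \;+\; \psi_3(w).
\]

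\textbf{Step 4: deduce the formula for $a_{3j}$.} By Remark \ref{rmk:a_dj=phi_d} one has $a_{3j}=\phi_{3\{\gamma_j\}}(0)$, so I would analytically continue the displayed formula for $\phi_3$ along $\gamma_j$ and evaluate at $0$. The key observation is that each loop $\gamma_j$ is a commutator in $\pi_1(\LF,0)$, so the monodromy of $\varphi_1$ along $\gamma_j$ is the commutator of the multiplicative characters $\nu_i = \exp(2\pi i\lambda_i)$, hence trivial: $\varphi_{1\{\gamma_j\}}(0) = \varphi_1(0) = 1$. Consequently the $c_3$ bracket evaluates to $c_3(1-1)/2 = 0$, and the quadratic contribution evaluates to $\phi_{2\{\gamma_j\}}(0)^2 = a_{2j}^2$, giving $a_{3j} = a_{2j}^2 + \psi_{3j}$ with $\psi_{3j} = \int_{\gamma_j} (S_3/r^3)\varphi_1^2\,dw$.

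There is no real obstacle here; the only subtlety is a bookkeeping one, namely ensuring that the reduction of $B_3$ to a derivative in Step~2 uses the same $\phi_2$ that appears in the statement (rather than $\varphi_2 = \varphi_1\phi_2$), and that the monodromy of $\varphi_1$ around the commutator loops vanishes so that the $c_3$ term drops out when passing from $\phi_3(w)$ to $a_{3j}$.
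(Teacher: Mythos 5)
Your proof is correct and follows essentially the same route as the paper's: integrate $2K_2\phi_2\varphi_1=(\phi_2^2)'$ directly and split $K_3=c_3K_1+S_3/r^3$, handling the $c_3K_1$ piece via the variation equation. The formula you actually derive, $\phi_3=\phi_2^2+c_3(\varphi_1^2-1)/2+\psi_3$ \emph{without} the extra factor $\varphi_1$ on $\phi_2^2$, is the one the paper's own proof obtains and the one used later (e.g.\ in the expansion of $J_{7j}$ in the proof of Proposition \ref{prop:sixthvar}), so the $\varphi_1(w)$ appearing in the stated display is a typo; as you note, the discrepancy is in any case invisible in $a_{3j}$ because $\varphi_{1\{\gamma_j\}}(0)=1$ along the commutator loops.
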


\begin{proof}
By Proposition \ref{prop:formulasB_d} we have that $\phi_3$ is given by
\[ \int_0^w B_3\,dt=2\int_0^w K_2\phi_2\varphi_1\,dt+\int_0^w K_3\varphi_1^2\,dt. \]
The first integral on the right--hand side can be easily computed:
\[ \int_0^w K_2\phi_2\varphi_1\,dt=\int_0^w B_2\phi_2\,dt=\int_0^w\frac{d\phi_2}{dt}\phi_2\,dt=\frac{1}{2}\phi_2^2. \]
For the second integral we split $K_3$ according to (\ref{eq:c_dS_d}):
\[ \int_0^w K_3\varphi_1^2\,dt=c_3\int_0^wK_1\varphi_1^2\,dt+\int_0^w\frac{S_3}{r^3}\varphi_1^2\,dt=c_3\frac{\varphi_1^2-1}{2}+\psi_3. \]
Adding up both integrals gives the desired result.
\end{proof}

\subsection{Analysis of the terms of higher degree}

For degrees higher than the third we shall not need an explicit expression for the reduced variation $\phi_d(w)$, so we focus only on the coefficient $a_{dj}=\phi_{d\{\gamma_j\}}(0)$. 

We stress that for any given exponent $n$ we have
\[ \int_{\gamma_j}K_1\varphi_1^n\,dw=0, \quad j=1,2, \]
since $\frac{d\varphi_1}{dw}=K_1\varphi_1$ and $\varphi_{1\{\gamma_j\}}(0)=\varphi_1(0)=1$.

\begin{proposition}\label{prop:fourthvar}
The coefficient of degree $4$ in the power series expansion of $f_j$ is given by
\[ a_{4j}=2a_{3j}a_{2j}-a_{2j}^3+\frac{c_3}{2}a_{2j}-c_2\psi_{3j}+\Delta_{1j}+\psi_{4j}, \]
where
\[ \Delta_{1j}=\int_{\gamma_j}\frac{S_3(w)}{r(w)^3}\,\psi_2(w)\varphi_1(w)^2\,dw, \quad \psi_{4j}=\int_{\gamma_j}\frac{q_4(w)}{r(w)^4}\,\varphi_1(w)^3\,dw,   \]
and the polynomial $q_4(w)$ is defined to be
\[ q_4(w)=S_4(w)+c_2S_3(w)r(w)-\frac{c_3}{2}S_2(w)r(w)^2, \]
with the terms $c_d$, $S_d$ as in Proposition \textnormal{\ref{prop:c_dS_d}}.
\end{proposition}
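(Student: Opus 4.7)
The plan is to compute $a_{4j} = \phi_{4\{\gamma_j\}}(0)$ by integrating the explicit formula for $B_4$ from Proposition \ref{prop:formulasB_d} along $\gamma_j$, applying integration by parts to handle the terms that involve the higher reduced variations, and then reducing everything using the two structural tools of the preceding section: the decomposition $K_d = c_d K_1 + S_d/r^d$ of Proposition \ref{prop:c_dS_d}, together with the fact that $\gamma_j$ is a commutator in $\pi_1(\LF, 0)$, so that
\begin{equation*}
\int_{\gamma_j} K_1 \varphi_1^n\, dt \;=\; \frac{1}{n+1}\,\bigl[\varphi_1^{n+1}\bigr]_{\gamma_j} \;=\; 0 \qquad \text{for every } n\geq 0.
\end{equation*}
The formula $\phi_2 = c_2(\varphi_1 - 1) + \psi_2$ of Proposition \ref{prop:secondvar} will also be invoked in the final steps.

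The only summand of $B_4$ containing $\phi_3$ is $2K_2 \phi_3 \varphi_1 = 2 \phi_3 \phi_2'$. Integrating by parts along $\gamma_j$ transforms it into $2 a_{3j}a_{2j} - 2\int_{\gamma_j} \phi_2 B_3\, dt$, and expanding $B_3$ via Proposition \ref{prop:formulasB_d}, together with $\int \phi_2^2\, d\phi_2 = \phi_2^3/3$, produces a cubic term $-\frac{4}{3} a_{2j}^3$ and a reduced integral $-2\int_{\gamma_j} K_3 \phi_2 \varphi_1^2 \, dt$. The $K_2 \phi_2^2 \varphi_1$ summand of $B_4$ contributes another $\frac{1}{3} a_{2j}^3$, and the $3K_3 \phi_2 \varphi_1^2$ summand combines with the $-2\int K_3 \phi_2 \varphi_1^2$ so that only one copy of $\int_{\gamma_j} K_3 \phi_2 \varphi_1^2\, dt$ survives. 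At this stage one has
\begin{equation*}
a_{4j} \;=\; 2 a_{3j} a_{2j} - a_{2j}^3 + \int_{\gamma_j} K_3 \phi_2 \varphi_1^2\, dt + \int_{\gamma_j} K_4 \varphi_1^3\, dt.
\end{equation*}

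The two surviving integrals are then reduced by splitting $K_3$ and $K_4$. In $\int K_4 \varphi_1^3$ the $c_4 K_1$ piece vanishes, leaving $\int_{\gamma_j} (S_4/r^4) \varphi_1^3\, dt$. In $\int K_3 \phi_2 \varphi_1^2$, the $c_3 K_1 \phi_2 \varphi_1^2$ piece calls for one further integration by parts via $K_1 \varphi_1^2 = \frac{1}{2}(\varphi_1^2)'$: this produces the leading $\frac{c_3}{2}\, a_{2j}$ plus a boundary-type integral $-\frac{c_3}{2}\int K_2 \varphi_1^3\, dt$, which upon applying $K_2 = c_2 K_1 + S_2/r^2$ becomes $-\frac{c_3}{2}\int_{\gamma_j} (S_2/r^2) \varphi_1^3\, dt$. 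For the $(S_3/r^3)\phi_2 \varphi_1^2$ piece I would substitute $\phi_2 = c_2(\varphi_1 - 1) + \psi_2$: the $\psi_2$ summand is exactly $\Delta_{1j}$, while the $c_2(\varphi_1 - 1)$ summand splits into $c_2 \int_{\gamma_j} (S_3/r^3)\varphi_1^3\, dt$ and $-c_2\, \psi_{3j}$.

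Finally, collecting all remaining $\varphi_1^3$-integrals over the common denominator $r^4$ shows that the numerator is precisely $S_4 + c_2 S_3 r - \frac{c_3}{2} S_2 r^2 = q_4$, so this last integral equals $\psi_{4j}$ and the formula in the statement drops out. The main obstacle is purely combinatorial accuracy: each sign and fractional coefficient must be tracked faithfully through the nested integration-by-parts steps, and one must check that the residual $\varphi_1^3$-integrals, which a priori carry denominators $r^2$, $r^3$ and $r^4$, assemble into exactly one polynomial numerator $q_4$ over $r^4$, with no extra boundary contribution surviving at the base point.
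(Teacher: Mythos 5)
Your proposal is correct and follows essentially the same route as the paper's proof: integrate $B_4$ along $\gamma_j$, integrate by parts on the $\phi_3$ term to reduce to $2a_{3j}a_{2j}-a_{2j}^3+\int K_3\phi_2\varphi_1^2+\int K_4\varphi_1^3$, then split $K_3$, $K_4$ via Proposition \ref{prop:c_dS_d} and substitute $\phi_2=c_2(\varphi_1-1)+\psi_2$ to assemble $q_4$. The only (immaterial) difference is that you integrate the $c_3K_1\phi_2\varphi_1^2$ piece by parts before substituting for $\phi_2$, whereas the paper substitutes first; both yield $\frac{c_3}{2}a_{2j}-\frac{c_3}{2}\int_{\gamma_j}\frac{S_2}{r^2}\varphi_1^3\,dw$, and all your coefficients check out.
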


\begin{proof}
By Proposition \ref{prop:formulasB_d} we know that $a_{4j}$ is given by
\begin{equation}\label{eq:phi_4-v1}
 a_{4j}=\int_{\gamma_j} B_4\,dw = 2H_{1j}+H_{2j}+3H_{3j}+H_{4j}, 
\end{equation}
where 
\[ H_{1j}=\int_{\gamma_j} K_2\phi_3\varphi_1\,dw,\quad H_{2j}=\int_{\gamma_j} K_2\phi_2^2\varphi_1\,dw,\quad H_{3j}=\int_{\gamma_j} K_3\phi_2\varphi_1^2\,dw,\quad H_{4j}=\int_{\gamma_j} K_4\varphi_1^3\,dw. \]
We now proceed to compute these integrals. It is straight forward that $H_{2j}=\int_{\gamma_j} B_2\phi_2^2\,dw$, hence
\[ H_{2j}=\int_{\gamma_j}\left(\frac{1}{3}\phi_2^3\right)^{\prime}\,dw=\frac{1}{3}a_{2j}^3. \] 
Note that  $H_{1j}=\int_{\gamma_j} B_2\phi_3\,dw$, so integration by parts yields
\[ \int_{\gamma_j}\frac{d\phi_2}{dt}\phi_3\,dw = a_{3j}a_{2j}-\int_{\gamma_j} B_3\phi_2\,dw. \]
Using the expression for $B_3$ provided in Proposition \ref{prop:formulasB_d} we see that
\[ H_{1j}=a_{3j}a_{2j}-2\int_{\gamma_j}K_2\varphi_1\phi_2^2\,dw-\int_{\gamma_j}K_3\phi_2\varphi_1^2\,dw=a_{3j}a_{2j}-\frac{2}{3}a_2^3-H_{3j}. \]
Equation (\ref{eq:phi_4-v1}) becomes 
\begin{equation}\label{eq:phi_4-v2}
a_{4j}=2a_{3j}a_{2j}-a_{2j}^3+H_{3j}+H_{4j}. 
\end{equation}

We split $K_3$ using equation (\ref{eq:c_dS_d}), thus
\begin{equation}\label{eq:I3-v0}
 H_{3j}=\int_{\gamma_j}c_3K_1\phi_2\varphi_1^2\,dw + \int_{\gamma_j}\frac{S_3}{r^3}\phi_2\varphi_1^2\,dw. 
\end{equation}
By Proposition \ref{prop:secondvar} we have $\phi_2=c_2(\varphi_1-1)+\psi_2$ therefore the first of the integrals above is given by
\[ \int_{\gamma_j}c_3K_1( c_2(\varphi_1-1)+\psi_2)\varphi_1^2\,dw=\int_{\gamma_j}c_3K_1\psi_2\varphi_1^2\,dw, \]
since 
\[ \int_{\gamma_j}K_1(\varphi_1-1)\varphi_1^2\,dw=0. \]
Note that integration by parts yields
\[ c_3\int_{\gamma_j}K_1\psi_2\varphi_1^2\,dw=c_3\int_{\gamma_j}\left(\frac{1}{2}\varphi_1^2\right)^{\prime} \psi_2\,dw=\frac{c_3}{2}a_{2j}-\int_{\gamma_j}\frac{\frac{c_3}{2}S_2}{r^2}\varphi_1^3\,dw. \]
On the other hand, the last integral in (\ref{eq:I3-v0}) is given by
\begin{align*} 
 \int_{\gamma_j}\frac{S_3}{r^3}( c_2(\varphi_1-1)+\psi_2)\varphi_1^2\,dw &= \int_{\gamma_j}\frac{c_2S_3}{r^3}\varphi_1^3\,dw-c_2\int_{\gamma_j}\frac{S_3}{r^3}\varphi_1^2\,dw+\int_{\gamma_j}\frac{S_3}{r^3}\psi_2\varphi_1^2\,dw. \\
								  &= \int_{\gamma_j}\frac{c_2S_3}{r^3}\varphi_1^3\,dw-c_2\psi_3+\Delta_{1j}.
\end{align*}
Therefore
\[ H_{3j}= \frac{c_3}{2}a_{2j}-\int_{\gamma_j}\frac{\frac{c_3}{2}S_2}{r^2}\varphi_1^3\,dw+\int_{\gamma_j}\frac{c_2S_3}{r^3}\varphi_1^3\,dw-c_2\psi_3+\Delta_{1j}. \]

Lastly, splitting $K_4$ according to equation (\ref{eq:c_dS_d}) we get
\[ H_{4j}=\int_{\gamma_j}\left(c_4K_1+\frac{S_4}{r^4}\right)\varphi_1^3\,dw=\int_{\gamma_j}\frac{S_4}{r^4}\varphi_1^3\,dw. \]

Substituting the above expressions for $H_{3j}$ and $H_{4j}$ in (\ref{eq:phi_4-v2}) and taking into account that we have defined $q_4=S_4+c_2S_3r-\frac{c_3}{2}S_2r^2$ we obtain the desired expression for $a_{4j}$.
\end{proof}

\begin{proposition}\label{prop:fifthvar}
The coefficient of degree $5$ in the power series expansion of $f_j$ is given by
\begin{align*}
a_{5j}	&= 2a_{4j}a_{2j}+\frac{3}{2}a_{3j}^2-4a_{3j}a_{2j}^2+\frac{3}{2}a_{2j}^4+ \frac{c_3}{2}a_{2j}^2+\frac{2c_4-c_3c_2}{3}a_{2j} \\
	&\phantom{=} +\,c_2^2\psi_{3j}-2c_2\psi_{4j}-2c_2\Delta_{1j}+\Delta_{2j}+2\Gamma_{1j}+\psi_{5j}, 
\end{align*}
where 
\[ \Delta_{2j} = \int_{\gamma_j}\frac{S_3(w)}{r(w)^3}\,\psi_2(w)^2\varphi_1(w)^2\,dw, \qquad \Gamma_{1j} = \int_{\gamma_j}\frac{q_4(w)}{r(w)^4}\,\psi_2(w)\varphi_1(w)^3\,dw, \]
\[\psi_{5j} = \int_{\gamma_j}\frac{q_5(w)}{r(w)^5}\,\varphi_1(w)^4\,dw, \]
and the polynomial $q_5(w)$ is defined to be
\[ q_5=S_5+2c_2S_4r+c_2^2S_3r^2-\frac{2}{3}(c_4+c_3c_2)S_2r^3  \]
with the terms $c_d$, $S_d$ as in Proposition \textnormal{\ref{prop:c_dS_d}}.
\end{proposition}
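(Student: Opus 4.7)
The proof follows the same template as Proposition \ref{prop:fourthvar} but with one more layer of expansion. Starting from $a_{5j}=\int_{\gamma_j}B_5\,dw$ and using the formula for $B_5$ in Proposition \ref{prop:formulasB_d}, I would split $a_{5j}$ into six elementary integrals corresponding to the six monomials of $B_5$; call them $J_1,\ldots,J_6$ in the order they appear. The first step is to extract the purely algebraic terms $2a_{4j}a_{2j}$, $\tfrac{3}{2}a_{3j}^2$, $-4a_{3j}a_{2j}^2$, $\tfrac{3}{2}a_{2j}^4$ via integration by parts. For $J_1=\int K_2\phi_4\varphi_1\,dw$, use $K_2\varphi_1=B_2=\phi_2'$ and integrate by parts so that the boundary contribution $a_{2j}a_{4j}$ separates, while the remainder $-\int B_4\phi_2\,dw$ pulls in combinations of $J_2,J_4,J_5$ together with the auxiliary integral $\int K_2\phi_2^3\varphi_1\,dw=\tfrac{1}{4}a_{2j}^4$. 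For $J_3=\int K_3\phi_3\varphi_1^2\,dw$, use the identity $K_3\varphi_1^2=B_3-2K_2\phi_2\varphi_1=\phi_3'-2\phi_2'\phi_2$ and integrate by parts again to produce $\tfrac{1}{2}a_{3j}^2$ plus corrections absorbable into $J_2$ and $J_4$. After these manipulations, all remaining integral terms are concentrated into $J_4,J_5,J_6$, modulo the algebraic combinations listed.

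The second step applies Proposition \ref{prop:c_dS_d} to split $K_d=c_dK_1+S_d/r^d$ for $d=3,4,5$ inside $J_4,J_5,J_6$. The $c_dK_1$-pieces multiplied by a pure power of $\varphi_1$ integrate to zero along $\gamma_j$, whereas the $c_dK_1$-pieces multiplied by $\phi_2^m\varphi_1^n$ require substituting $\phi_2=c_2(\varphi_1-1)+\psi_2$ from Proposition \ref{prop:secondvar}. The purely $\varphi_1^n$-summands then vanish by $\int_{\gamma_j}K_1\varphi_1^n\,dw=0$, while each $\psi_2$-summand is treated by a further integration by parts using $K_1\varphi_1^n=\tfrac{1}{n+1}(\varphi_1^{n+1})'$, yielding the coefficients $\tfrac{c_3}{2}a_{2j}^2$ and $\tfrac{2c_4-c_3c_2}{3}a_{2j}$ that appear in the statement. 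The analogous substitution $\phi_2=c_2(\varphi_1-1)+\psi_2$ in the $S_d/r^d$-pieces generates the mixed contributions $-2c_2\psi_{4j}$, $-2c_2\Delta_{1j}$ and $c_2^2\psi_{3j}$ via the same bookkeeping used in the proof of Proposition \ref{prop:fourthvar}.

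The final step is the algebraic collection of residual integrals. After all manipulations above, the integrals that do not reduce to polynomials in the $a_{2j}$'s take the form of $S_d/r^d$ multiplied by powers of $\varphi_1$ and $\psi_2$. Grouping the single-$\psi_2$ pieces multiplied by $\varphi_1^3$ gives $2\Gamma_{1j}$, the $\psi_2^2\varphi_1^2$ piece gives $\Delta_{2j}$, and the residual pure $\varphi_1^4$-terms assemble into $\psi_{5j}=\int_{\gamma_j}q_5(w)r(w)^{-5}\varphi_1(w)^4\,dw$ where $q_5=S_5+2c_2S_4r+c_2^2S_3r^2-\tfrac{2}{3}(c_4+c_3c_2)S_2r^3$ is precisely the combination dictated by the constants $c_d$ that were pulled out during the splitting procedure. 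The main obstacle will be the combinatorial bookkeeping: the number of terms is roughly twice what arose in Proposition \ref{prop:fourthvar}, and one must track carefully where each cross-term $c_dc_{d'}$ originates—most delicately the factor $c_3c_2$, which contributes both to the coefficient of $a_{2j}$ and to the polynomial $q_5$—in order to verify that the final sum matches the claimed expression verbatim.
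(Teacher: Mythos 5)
Your proposal follows the paper's proof essentially verbatim: the same decomposition of $\int_{\gamma_j}B_5\,dw$ into the six elementary integrals, the same integration-by-parts identities ($K_2\varphi_1=\phi_2'$, $K_3\varphi_1^2=B_3-2K_2\phi_2\varphi_1$, $\int_{\gamma_j}K_2\phi_2^3\varphi_1\,dw=\tfrac14 a_{2j}^4$) to reduce to $I_{4j}+2I_{5j}+I_{6j}$ plus the algebraic terms, and the same splitting $K_d=c_dK_1+S_d/r^d$ with $\phi_2=c_2(\varphi_1-1)+\psi_2$ followed by grouping by powers of $\varphi_1$ to produce $c_2^2\psi_{3j}-2c_2\Delta_{1j}+\Delta_{2j}$, $-2c_2\psi_{4j}+2\Gamma_{1j}$ and $\psi_{5j}$. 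The plan is correct and the remaining work is, as you say, only bookkeeping.
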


\begin{proof}
According to Proposition \ref{prop:formulasB_d} we know that $a_{5j}$ is given by
\begin{equation}\label{eq:a5-pt0} 
\phi_{5\{\gamma_j\}}(0)=\int_{\gamma_j}B_5\,dw = 2I_{1j}+2I_{2j}+3I_{3j}+3I_{4j}+4I_{5j}+I_{6j}, 
\end{equation}
where 
\begin{align*}
 I_{1j}&=\int_{\gamma_j}K_2\phi_4\varphi_1\,dw, 	& I_{2j}&=\int_{\gamma_j}K_2\phi_3\phi_2\varphi_1\,dw, 	& I_{3j}&=\int_{\gamma_j}K_3\phi_3\varphi_1^2\,dw, \\
 I_{4j}&=\int_{\gamma_j}K_3\phi_2^2\varphi_1^2\,dw, 	& I_{5j}&=\int_{\gamma_j}K_4\phi_2\varphi_1^3\,dw, 	& I_{6j}&=\int_{\gamma_j}K_5\varphi_1^4\,dw.
\end{align*}
The first integrals are computed as follows:
\begin{align*}
I_{2j}	&= \int_{\gamma_j}\left(\frac{1}{2}\phi_2^2\right)^\prime\phi_3\,dw=\frac{1}{2}a_{2j}^2a_{3j}-\frac{1}{2}\int_{\gamma_j}B_3\phi_2^2\,dw \\
	&= \frac{1}{2}a_{3j}a_{2j}^2-\frac{1}{2}\int_{\gamma_j}2K_2\phi_2^3\varphi_1\,dw-\frac{1}{2}\int_{\gamma_j}K_3\phi_2^2\varphi_1^2\,dw \\
	&= \frac{1}{2}a_{3j}a_{2j}^2-\frac{1}{4}a_{2j}^4-\frac{1}{2}I_{4j}.
\end{align*}
\begin{align*}
I_{3j}	&= \int_{\gamma_j}(K_3\varphi_1^2)\phi_3\,dw=\int_{\gamma_j}(B_3-2K_2\phi_2\varphi_1)\phi_3\,dw=\frac{1}{2}a_{3j}^2-2I_{2j} \\
	&= \frac{1}{2}a_{3j}^2-a_{3j}a_{2j}^2+\frac{1}{2}a_{2j}^4+I_{4j}.
\end{align*}
\begin{align*}
I_{1j}	&= \int_{\gamma_j}\frac{d\phi_2}{dw}\phi_4\,dw=a_{4j}a_{2j}-\int_{\gamma_j}B_4\phi_2\,dw \\
	&= a_{4j}a_{2j}-2\int_{\gamma_j}K_2\phi_3\phi_2\varphi_1\,dw-\int_{\gamma_j}K_2\phi_2^3\varphi_1\,dw-3\int_{\gamma_j}K_3\phi_2^2\varphi_1^2\,dw-\int_{\gamma_j}K_4\phi_2\varphi_1^3\,dw \\
	&= a_{4j}a_{2j}-2I_{2j}-\frac{1}{4}a_{2j}^4-3I_{4j}-I_{5j}\\
	&= a_{4j}a_{2j}-a_{3j}a_{2j}^2+\frac{1}{4}a_{2j}^4-2I_{4j}-I_{5j}.
\end{align*}
Therefore equation (\ref{eq:a5-pt0}) becomes
\begin{equation}\label{eq:a5-pt1}
a_{5j} = 2a_{4j}a_{2j}+\frac{3}{2}a_{3j}^2-4a_{3j}a_{2j}^2+\frac{3}{2}a_{2j}^4+I_{4j}+2I_{5j}+I_{6j}.
\end{equation}

Next we break $K_3$ according to (\ref{eq:c_dS_d}), so $I_{4j}=\int_{\gamma_j}\left(\frac{S_3}{r^3}+c_3K_1\right)\phi_2^2\varphi_1^2\,dw$. Now, using Proposition \ref{prop:secondvar} we get
\begin{align*} 
\int_{\gamma_j}\frac{S_3}{r^3}\phi_2^2\varphi_1^2\,dw	&= \int_{\gamma_j}\frac{S_3}{r^3}(c_2(\varphi_1-1)+\psi_2)^2\varphi_1^2\,dw \\
							&= \int_{\gamma_j}\frac{S_3}{r^3}(c_2^2(\varphi_1^4-2\varphi_1^3+\varphi_1^2)+2c_2(\varphi_1^3-\varphi_2^2)\psi_2+\psi_2^2\varphi_1^2)\,dw.
\end{align*}
Let us group under a same integral those terms having the same exponent on $\varphi_1$,
\begin{align}\label{eq:a5-I4-pt1}
\int_{\gamma_j}\frac{S_3}{r^3}\phi_2^2\varphi_1^2\,dw	&= \int_{\gamma_j}\frac{c_2^2S_3}{r^3}\varphi_1^4\,dw + \int_{\gamma_j}\frac{-2c_2^2S_3+2c_2S_3\psi_2}{r^3}\varphi_1^3\,dw \nonumber \\
							&\phantom{=} + \int_{\gamma_j}\frac{c_2^2S_3-2c_2S_3\psi_2+S_3\psi_2^2}{r^3}\varphi_1^2\,dw.
\end{align}
On the other hand,
\[ \int_{\gamma_j}c_3K_1\phi_2^2\varphi_1^2\,dw=c_3\int_{\gamma_j}\left(\frac{1}{2}\varphi_1^2\right)^{\prime}\phi_2^2\,dw=\frac{c_3}{2}a_{2j}^2-c_3\int_{\gamma_j}B_2\phi_2\varphi_1^2\,dw. \]
The last integral above is given by $\int_{\gamma_j}K_2\phi_2\varphi_1^3\,dw=\int_{\gamma_j}\left(\frac{S_2}{r^2}+c_2K_1\right)\phi_2\varphi_1^3\,dw$.
\[ \int_{\gamma_j}\frac{S_2}{r^2}\phi_2\varphi_1^3\,dw = \int_{\gamma_j}\frac{S_2}{r^2}(c_2(\varphi_1-1)+\psi_2)\varphi_1^3\,dw = \int_{\gamma_j}\frac{c_2S_2}{r^2}\varphi_1^4\,dw+\int_{\gamma_j}\frac{-c_2S_2+S_2\psi_2}{r^2}\varphi_1^3\,dw \]
and
\[ \int_{\gamma_j}c_2K_1\phi_2\varphi_1^3\,dw = \int_{\gamma_j}c_2K_1(c_2(\varphi_1-1)+\psi_2)\varphi_1^3\,dw = c_2\int_{\gamma_j}K_1\psi_2\varphi_1^3\,dw, \]
since $\int_{\gamma_j}K_1(\varphi_1^4-\varphi_1^3)\,dw=0$. The integral on the right--hand side above can be integrated by parts to obtain
\[ \int_{\gamma_j}\left(\frac{1}{3}\varphi_1^3\right)^{\prime}\psi_2\,dw=\frac{1}{3}a_{2j}-\frac{1}{3}\int_{\gamma_j}\frac{S_2}{r^2}\varphi_1^4\,dw. \]
We conclude that
\begin{equation}\label{eq:a5-I4-pt2}
\int_{\gamma_j}c_3K_1\phi_2^2\varphi_1^2\,dw=\frac{c_3}{2}a_{2j}^2-\frac{c_3c_2}{3}a_{2j}+\int_{\gamma_j}\frac{c_3c_2S_2-c_3S_2\psi_2}{r^2}\varphi_1^3\,dw+\int_{\gamma_j}\frac{-\frac{2}{3}c_3c_2S_2}{r^2}\varphi_1^4\,dw.
\end{equation}

We proceed in a similar way to compute $2I_{5j}=2\int_{\gamma_j}\left(\frac{S_4}{r^4}+c_4K_1\right)\phi_2\varphi_1^3\,dw$. By Proposition \ref{prop:secondvar}, $\phi_2=c_2(\varphi_1-1)+\psi_2$, so
\begin{equation}\label{eq:a5-I5-pt1}
2\int_{\gamma_j}\frac{S_4}{r^4}\phi_2\varphi_1^3\,dw=2\int_{\gamma_j}\frac{c_2S_4}{r^4}\varphi_1^4\,dw+2\int_{\gamma_j}\frac{-c_2S_4+S_4\psi_2}{r^4}\varphi_1^3\,dw. 
\end{equation}
On the other hand, 
\begin{align}\label{eq:a5-I5-pt2}
2\int_{\gamma_j}c_4K_1\phi_2\varphi_1^3\,dw	&= 2c_4\int_{\gamma_j}\left(\frac{1}{3}\varphi_1^3\right)^{\prime}\phi_2\,dw=\frac{2c_4}{3}a_{2j}-\frac{2c_4}{3}\int_{\gamma_j}B_2\varphi_1^3\,dw \nonumber \\
						&= \frac{2c_4}{3}a_{2j}+\int_{\gamma_j}\frac{-\frac{2}{3}c_4S_2}{r^2}\varphi_1^4\,dw, 
\end{align}
since $B_2=\left(\frac{S_2}{r^2}+c_2K_1\right)\varphi_1$ and $\int_{\gamma_j}K_1\varphi_1^4\,dw=0$.

Lastly, note that writing $K_5=\frac{S_5}{r^5}+c_5K_1$ immediately yields
\begin{equation}\label{eq:a5-I6}
I_{6j}=\int_{\gamma_j}\frac{S_5}{r^5}\varphi_1^4\,dw.
\end{equation}

The formula claimed for $a_{5j}$ is obtained by combining equations (\ref{eq:a5-pt1}) to (\ref{eq:a5-I6}). Indeed, substituting in (\ref{eq:a5-pt1}) the expressions found in (\ref{eq:a5-I4-pt1}) -- (\ref{eq:a5-I6}) yields
\[ a_{5j}=2a_{4j}a_{2j}+\frac{3}{2}a_{3j}^2-4a_{3j}a_{2j}^2+\frac{3}{2}a_{2j}^4+ \frac{c_3}{2}a_{2j}^2+\frac{2c_4-c_3c_2}{3}a_{2j}+E_{2j}+E_{3j}+E_{4j}, \]
where we have grouped all integrals containing $\varphi_1$ to the $k$--th power in a single integral $E_{kj}$ given by the following expressions:
\begin{align*}
E_{2j}	&= \int_{\gamma_j}\frac{c_2^2S_3-2c_2S_3\psi_2+S_3\psi_2^2}{r^3}\varphi_1^2\,dw = c_2^2\psi_{3j}-2c_2\Delta_{1j}+\Delta_{2j}, \\[4pt]
E_{3j}	&= \int_{\gamma_j}\frac{-2c_2^2S_3r+c_3c_2S_2r^2-2c_2S_4+(2c_2S_3r-c_3S_2r^2+2S_4)\psi_2}{r^4}\varphi_1^3\,dw\\[4pt]
	&= \int_{\gamma_j}\frac{-c_2q_4+2q_4\psi_2}{r^4}\varphi_1^3\,dw = -2c_2\psi_{4j}+2\Gamma_{1j}, \\[4pt]
E_{4j}	&= \int_{\gamma_j}\frac{c_2^2S_3r^2-\frac{2}{3}c_3c_2S_2r^3+2c_2S_4r-\frac{2}{3}c_4S_2r^3+S_5}{r^5}\varphi_1^4\,dw\\[4pt]
	&= \int_{\gamma_j}\frac{q_5}{r^5}\varphi_1^4\,dw = \psi_{5j}.
\end{align*}
This is exactly the expression claimed by Proposition \ref{prop:fifthvar}.
\end{proof}

\begin{proposition}\label{prop:sixthvar}
The coefficient of degree $6$ in the power series expansion of $f_j$ is given by
\begin{align*}
a_{6j} 	&=	2a_{5j}a_{2j}+3a_{4j}a_{3j}-4a_{4j}a_{2j}^2-5a_{3j}^2a_{2j}+7a_{3j}a_{2j}^3-2a_{2j}^5 \\
&\phantom{=}	+\frac{c_3}{2}a_{2j}^3+\left(c_4-\frac{c_3c_2}{2}\right)a_{2j}^2+\left(\frac{3c_5}{4}-\frac{c_4c_2}{2}-\frac{c_3^2}{8}+\frac{c_3c_2^2}{4}+\frac{c_3}{2}\psi_{3j}\right)a_{2j} \\
&\phantom{=}	-\frac{c_2}{2}\psi_{3j}^2+\left(\frac{c_4}{3}+\frac{c_3c_2}{3}-c_2^3\right)\psi_{3j}+\left(-\frac{c_3}{2}+3c_2^2\right)\Delta_{1j}-3c_2\Delta_{2j}+\Delta_{3j}+\Delta_{(1,1)j} \\
&\phantom{=}	+\left(-\frac{c_3}{2}+3c_2^2\right)\psi_{4j}-6c_2\Gamma_{1j}+3\Gamma_{2j}+\Gamma_{(0,1)j}-3c_2\psi_{5j}+3\mathrm{B}_{1j}+\psi_{6j}.
\end{align*}
where
\begin{align*}
 \Delta_{3j} &= \int_{\gamma_j}\frac{S_3}{r^3}\,\psi_2^3\varphi_1^2\,dw, & 
 \Delta_{(1,1)j} &= \int_{\gamma_j}\frac{S_3}{r^3}\,\psi_2\psi_3\varphi_1^2\,dw, \\
 \Gamma_{2j} &= \int_{\gamma_j}\frac{q_4}{r^4}\,\psi_2^2\varphi_1^3\,dw, & 
 \Gamma_{(0,1)j} &= \int_{\gamma_j}\frac{q_4}{r^4}\,\psi_3\varphi_1^3\,dw, \\
 \mathrm{B}_{1j} &= \int_{\gamma_j}\frac{q_5}{r^5}\,\psi_2\varphi_1^4\,dw, &
 \psi_{6j} &= \int_{\gamma_j}\frac{q_6}{r^6}\,\varphi_1^5\,dw, 
\end{align*}
and the polynomial $q_6(w)$ is defined to be
\begin{align*}
q_6 	&= 	S_6+3c_2S_5r+\left(\frac{c_3}{2}+3c_2^2\right)S_4r^2+\left(-\frac{c_4}{3}+\frac{c_3c_2}{6}+c_2^3\right)S_3r^3 \\
&\phantom{=}	+\left(-\frac{3c_5}{4}-\frac{3c_4c_2}{2}-\frac{c_3^2}{8}-\frac{3c_3c_2^2}{4}\right)S_2r^4,
\end{align*}
with the terms $c_d$, $S_d$ as in Proposition \textnormal{\ref{prop:c_dS_d}}.
\end{proposition}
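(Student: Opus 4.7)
The plan is to follow exactly the same template used in the proofs of Propositions \ref{prop:fourthvar} and \ref{prop:fifthvar}, now applied to the expression for $B_6$ given in Proposition \ref{prop:formulasB_d}. Starting from
\[ a_{6j}=\int_{\gamma_j}B_6\,dw, \]
I would expand $B_6$ according to Proposition \ref{prop:formulasB_d} and write $a_{6j}$ as a linear combination of integrals $J_{kj}$ corresponding to each monomial in the $\phi_i$'s appearing in $B_6$. These are ten integrals in total, involving factors of $K_2,K_3,K_4,K_5,K_6$ and products of the reduced variations $\phi_2,\phi_3,\phi_4,\phi_5$ against powers of $\varphi_1$.

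The first step is to reduce the ``non-terminal'' integrals (those containing $\phi_k$ with $k\geq 3$) to integrals involving only $\phi_2$ and the $\psi_k$'s, using integration by parts exactly as in the proof of Proposition \ref{prop:fifthvar}. Concretely, each integral containing a factor $K_2\phi_k\varphi_1=B_2\phi_k$ is integrated by parts to produce a term of the form $a_{kj}a_{2j}$ minus $\int_{\gamma_j}B_k\phi_2\,dw$, and then $B_k$ is re-expanded by Proposition \ref{prop:formulasB_d}. Similarly, integrals with a $K_3\phi_k\varphi_1^2$ factor are rewritten using $B_3=2K_2\phi_2\varphi_1+K_3\varphi_1^2$, and so on. After finitely many such reductions, every occurrence of $\phi_k$, $k\geq 3$, is traded for the known combinations $a_{kj}a_{2j}$, $a_{kj}a_{3j}$, etc., plus integrals containing only $\phi_2$. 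This should reproduce the polynomial combination of the $a_{kj}$'s displayed in the statement (in particular the coefficients $2,3,-4,-5,7,-2$).

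The second step is to split every remaining $K_d$ using Proposition \ref{prop:c_dS_d} as $K_d=c_dK_1+S_d/r^d$, replace $\phi_2$ by $c_2(\varphi_1-1)+\psi_2$ via Proposition \ref{prop:secondvar}, and replace $\phi_3$ by $\phi_2^2\varphi_1+\tfrac{c_3}{2}(\varphi_1^2-1)+\psi_3$ via Proposition \ref{prop:thirdvar}. Then, within each integral, all terms of the form $K_1\varphi_1^n$ (with no factor of $\psi_2$ or $\psi_3$) give zero because $\varphi_{1\{\gamma_j\}}(0)=1$, while terms of the form $K_1\psi_k\varphi_1^n$ are integrated by parts using $(\varphi_1^{n+1}/(n+1))'=K_1\varphi_1^{n+1}$ to produce a boundary contribution $\tfrac{1}{n+1}a_{kj}$ and an integral involving $B_k/\varphi_1=S_k/r^k+c_kK_1$. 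This is the same mechanism that generated the $a_{2j}$-prefactors and the $c_3c_2$-type scalars in Proposition \ref{prop:fifthvar}; here it should generate the explicit polynomial combinations in $c_3,c_4,c_5,c_2$ that multiply $a_{2j}^3, a_{2j}^2, a_{2j}, \psi_{3j}$ in the statement.

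The final step is bookkeeping: grouping the remaining integrals by the power of $\varphi_1$ they contain. Integrals with $\varphi_1^2$ combine into $\psi_{3j}^2$, $\Delta_{1j}$, $\Delta_{2j}$, $\Delta_{3j}$ and $\Delta_{(1,1)j}$; integrals with $\varphi_1^3$ combine using the polynomial $q_4$ into $\psi_{4j},\Gamma_{1j},\Gamma_{2j},\Gamma_{(0,1)j}$; integrals with $\varphi_1^4$ combine using $q_5$ into $\psi_{5j}$ and $\mathrm{B}_{1j}$; and integrals with $\varphi_1^5$ combine into a single integral of the form $\int_{\gamma_j}Q/r^6\cdot\varphi_1^5\,dw$, where $Q$ is the explicit linear combination
\[ S_6+3c_2S_5r+\Bigl(\tfrac{c_3}{2}+3c_2^2\Bigr)S_4r^2+\Bigl(-\tfrac{c_4}{3}+\tfrac{c_3c_2}{6}+c_2^3\Bigr)S_3r^3+\Bigl(-\tfrac{3c_5}{4}-\tfrac{3c_4c_2}{2}-\tfrac{c_3^2}{8}-\tfrac{3c_3c_2^2}{4}\Bigr)S_2r^4, \]
which is precisely $q_6$. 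The main obstacle is not conceptual but combinatorial: there are many cross-terms, each contributing to several of the rational coefficients in front of $a_{2j}^k$, $\psi_{kj}$, $\Delta_{kj}$ and $\Gamma_{kj}$, and a single sign or factor $1/n$ error propagates. I would therefore carry out the computation by keeping, at every intermediate stage, an auxiliary table of ``current coefficient of $\varphi_1^n$'' for $n=2,3,4,5$, updating it as each integration by parts is performed, so that at the end the coefficients can be read off directly and compared with those defining $q_6$.
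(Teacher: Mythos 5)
Your proposal follows exactly the same route as the paper's proof: expand $B_6$ into the ten integrals $J_{kj}$, integrate by parts to trade every $\phi_k$ with $k\geq 3$ for polynomial combinations of the $a_{kj}$'s, then split each $K_d$ as $c_dK_1+S_d/r^d$, substitute the explicit forms of $\phi_2$ and $\phi_3$, kill the pure $K_1\varphi_1^n$ terms, and regroup the surviving integrals by the power of $\varphi_1$ so that the $\varphi_1^5$ block assembles into $q_6$. The plan is correct and matches the paper's computation step for step; the only caveat is that the actual verification of the rational coefficients is left as bookkeeping, which is precisely what the paper's proof carries out in detail.
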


\begin{proof}
According to Proposition \ref{prop:formulasB_d}, $a_{6j}$ is given by
\[ \phi_{6\{\gamma_j\}}(0)=\int_{\gamma_j}B_6\,dw=2J_{1j}+2J_{2j}+J_{3j}+3J_{4j}+6J_{5j}+J_{6j}+4J_{7j}+6J_{8j}+5J_{9j}+J_{10j}, \]
where
\begin{align*}
J_{1j}&=\int_{\gamma_j}K_2\phi_5\varphi_1\,dw, &
J_{2j}&=\int_{\gamma_j}K_2\phi_4\phi_2\varphi_1\,dw, & 
J_{3j}&=\int_{\gamma_j}K_2\phi_3^2\varphi_1\,dw, \\
J_{4j}&=\int_{\gamma_j}K_3\phi_4\varphi_1^2\,dw, &
J_{5j}&=\int_{\gamma_j}K_3\phi_3\phi_2\varphi_1^2\,dw, &
J_{6j}&=\int_{\gamma_j}K_3\phi_2^3\varphi_1^2\,dw, \\
J_{7j}&=\int_{\gamma_j}K_4\phi_3\varphi_1^3\,dw, &
J_{8j}&=\int_{\gamma_j}K_4\phi_2^2\varphi_1^3\,dw, &
J_{9j}&=\int_{\gamma_j}K_5\phi_2\varphi_1^4\,dw, \\
J_{10j}&=\int_{\gamma_j}K_6\varphi_1^5\,dw. & & 
\end{align*}

Let us compute some of these integrals. First of all let us define $J_{0j}=\int_{\gamma_j}K_2\phi_3\phi_2^2\varphi_1\,dw$. Taking into account the expression for $B_4$ presented in Proposition \ref{prop:formulasB_d}, we have
\begin{align*}
J_{2j}	&= \int_{\gamma_j}\left(\frac{1}{2}\phi_2^2\right)^{\prime}\phi_4\,dw=\frac{1}{2}a_{4j}a_{2j}^2-\frac{1}{2}\int_{\gamma_j}B_4\phi_2^2\,dw \\
	&= \frac{1}{2}a_{4j}a_{2j}^2-J_{0j}-\frac{1}{10}a_{2j}^5-\frac{3}{2}J_{6j}-\frac{1}{2}J_{8j}.
\end{align*}
Similarly, taking into account the expression found for $B_5$ we obtain
\begin{align*}
J_{1j} 	&= \int_{\gamma_j}B_2\phi_5\,dw=a_{5j}a_{2j}-\int_{\gamma_j}B_5\phi_2\,dw \\
	&= a_{5j}a_{2j}-2J_{2j}-2J_{0j}-3J_{5j}-3J_{6j}-4I_{8j}-J_{9j} \\
	&= a_{5j}a_{2j}-a_{4j}a_{2j}^2+\frac{1}{5}a_{2j}^5-3J_{5j}-3J_{8j}-J_{9j}.
\end{align*}
We also have
\[ J_{3j}=\int_{\gamma_j}B_2\phi_3^2\,dw=a_{3j}^2a_{2j}-2\int_{\gamma_j}B_3\phi_3\phi_2\,dw=a_{3j}^2a_{2j}-4J_{0j}-2J_{5j}, \]
and
\begin{align*}
J_{4j}	&= \int_{\gamma_j}(B_3-2K_2\phi_2\varphi_1)\phi_4\,dw=a_{4j}a_{3j}-\int_{\gamma_j}B_4\phi_3\,dw-2J_{2j} \\
	&= a_{4j}a_{3j}-2J_{3j}-J_{0j}-3J_{5j}-J_{7j}-2J_{2j}.
\end{align*}
Taking into account the expressions for $J_{3j}$ and $J_{2j}$ above we obtain
\[ J_{4j}=a_{4j}a_{3j}-a_{4j}a_{2j}^2-2a_{3j}^2a_{2j}+\frac{1}{5}a_{2j}^5+9J_{0j}+J_{5j}+3J_{6j}-J_{7j}+J_{8j}. \]

We conclude that 
\begin{align*} 
a_{6j} &=	2a_{5j}a_{2j}+3a_{4j}a_{3j}-4a_{4j}a_{2j}^2-5a_{3j}^2a_{2j}+\frac{4}{5}a_{2j}^5 \\
&\phantom{=}	+21J_{0j}+J_{5j}+7J_{6j}+J_{7j}+2J_{8j}+3J_{9j}+J_{10j}.
\end{align*}
Note that $J_{0j}=\int_{\gamma_j}\left(\frac{1}{3}\phi_2^3\right)^{\prime}\phi_3\,dw=\frac{1}{3}a_{3j}a_{2j}^3-\frac{1}{3}\int_{\gamma_j}B_3\phi_2^3\,dw$ and $\int_{\gamma_j}B_3\phi_2^3\,dw=\frac{2}{5}a_{2j}^5+J_{6j}$. This shows that
\[ J_{0j}=\frac{1}{3}a_{3j}a_{2j}^3-\frac{2}{15}a_{2j}^5-\frac{1}{3}J_{6j}. \]
We arrive to the following formula for $a_{6j}$,
\begin{align}\label{eq:a6-v1}
a_{6j}	&= 2a_{5j}a_{2j}+3a_{4j}a_{3j}-4a_{4j}a_{2j}^2-5a_{3j}^2a_{2j}+7a_{3j}a_{2j}^3-2a_{2j}^5\nonumber \\
&\phantom{=}	+J_{5j}+J_{7j}+2J_{8j}+3J_{9j}+J_{10j}.
\end{align}

Let us now compute $J_{5j}=\int_{\gamma_j}K_3\phi_3\phi_2\varphi_1^2\,dw$. We split $K_3$ according to (\ref{eq:c_dS_d}) and write $J_{5j}=J_{5j}^{(1)}+J_{5j}^{(2)}$, where
\[ J_{5j}^{(1)}=\int_{\gamma_j}\frac{S_3}{r^3}\phi_3\phi_2\varphi_1^2\,dw, \quad J_{5j}^{(2)}=\int_{\gamma_j}c_3K_1\phi_3\phi_2\varphi_1^2\,dw. \]
Note first that using Proposition \ref{prop:secondvar} and Proposition \ref{prop:thirdvar} we can write $\phi_3\phi_2\varphi_1^2$ as
\[ \big( c_2^2(\varphi_1^2-2\varphi_1+1)+\frac{1}{2}c_3(\varphi_1^2-1)+2c_2(\varphi_1-1)\psi_2+ \psi_2^2+\psi_3\big)\big( c_2(\varphi_1-1)+\psi_2\big)\varphi_1^2, \]
therefore we obtain
\begin{align}
\phi_3\phi_2\varphi_1^2 &= c_2^3(\varphi_1^5-3\varphi_1^4+3\varphi_1^3-\varphi_1^2)+\frac{1}{2}c_3c_2(\varphi_1^5-\varphi_1^4-\varphi_1^3+\varphi_1^2) \nonumber \\
 &\phantom{=} +3c_2^2(\varphi_1^4-2\varphi_1^3+\varphi_1^2)\psi_2+\frac{1}{2}c_3(\varphi_1^4-\varphi_1^2)\psi_2 +3c_2(\varphi_1^3-\varphi_1^2)\psi_2^2 \label{eq:J5v0} \\
&\phantom{=} +\psi_2^3\varphi_1^2 +c_2(\varphi_1^3-\varphi_1^2)\psi_3 +\psi_3\psi_2\varphi_1^2. \nonumber 
\end{align}
We substitute the above expression for $\phi_3\phi_2\varphi_1^2$ in $J_{5j}^{(1)}$ and regroup under the same integral those terms having the same power of $\varphi_1$ to obtain
\begin{align}
 J_{5j}^{(1)} &= \int_{\gamma_j}\frac{(c_2^3+\frac{1}{2}c_3c_2)S_3}{r^3}\varphi_1^5\,dw +\int_{\gamma_j}\frac{(-3c_2^3-\frac{1}{2}c_3c_2)S_3+(3c_2^2+\frac{1}{2}c_3)S_3\psi_2}{r^3}\,\varphi_1^4\,dw \nonumber \\
&\phantom{=} +\int_{\gamma_j}\frac{(3c_2^3-\frac{1}{2}c_3c_2)S_3-6c_2^2S_3\psi_2+3c_2S_3\psi_2^2+c_2S_3\psi_3}{r^3}\varphi_1^3\,dw \label{eq:J5(1)v0} \\
&\phantom{=} +\int_{\gamma_j}\frac{(-c_2^3+\frac{1}{2}c_3c_2)S_3+(3c_2^2-\frac{1}{2}c_3)S_3\psi_2-3c_2S_3\psi_2^2+S_3\psi_2^3-c_2S_3\psi_3+S_3\psi_3\psi_2}{r^3}\varphi_1^2\,dw. \nonumber
\end{align}
We shall simplify only one of the above terms: Note that 
\[ \int_{\gamma_j}\frac{-c_2S_3\psi_3}{r^3}\varphi_1^2\,dw =-c_2\int_{\gamma_j}\frac{d\psi_3}{dw}\psi_3\,dw=-\frac{1}{2}c_2\psi_{3j}^2.\]
We thus obtain
\begin{align}
 J_{5j}^{(1)} &= -\frac{1}{2}c_2\psi_{3j}^2 + \int_{\gamma_j}\frac{(c_2^3+\frac{1}{2}c_3c_2)S_3}{r^3}\varphi_1^5\,dw +\int_{\gamma_j}\frac{(-3c_2^3-\frac{1}{2}c_3c_2)S_3+(3c_2^2+\frac{1}{2}c_3)S_3\psi_2}{r^3}\,\varphi_1^4\,dw \nonumber \\
&\phantom{=} +\int_{\gamma_j}\frac{(3c_2^3-\frac{1}{2}c_3c_2)S_3-6c_2^2S_3\psi_2+3c_2S_3\psi_2^2+c_2S_3\psi_3}{r^3}\varphi_1^3\,dw \label{eq:J5(1)} \\
&\phantom{=} +\int_{\gamma_j}\frac{(-c_2^3+\frac{1}{2}c_3c_2)S_3+(3c_2^2-\frac{1}{2}c_3)S_3\psi_2-3c_2S_3\psi_2^2+S_3\psi_2^3+S_3\psi_3\psi_2}{r^3}\varphi_1^2\,dw. \nonumber
\end{align}

For computing $J_{5j}^{(2)}=\int_{\gamma_j}c_3K_1\phi_3\phi_2\varphi_1^2\,dw$ we also substitute the expression for $\phi_3\phi_2\varphi_1^2$ found in (\ref{eq:J5v0}). Splitting the integral into individual terms, we get expressions of the form $\int_{\gamma_j}K_1\psi_3^s\psi_2^t\varphi_1^k\,dw$. For each of these terms we use one of the following integration by parts formulas:
\begin{align}
\int_{\gamma_j}K_1\psi_2^s\varphi_1^k\,dw &= \frac{1}{k}a_{2j}^s-\frac{s}{k}\int_{\gamma_j} \frac{S_2}{r^2}\psi_2^{s-1}\varphi_1^{k+1}\,dw, \nonumber \\
\int_{\gamma_j}K_1\psi_3^s\varphi_1^k\,dw &= \frac{1}{k}\psi_{3j}^s-\frac{s}{k}\int_{\gamma_j} \frac{S_3}{r^3}\psi_3^{s-1}\varphi_1^{k+2}\,dw, \label{eq:intbypartsformulas} \\
\int_{\gamma_j}K_1\psi_3\psi_2\varphi_1^2\,dw &= \frac{1}{2}\psi_{3j}a_{2j}-\frac{1}{2}\int_{\gamma_j}\frac{S_3}{r^3}\psi_2\varphi_1^4\,dw -\frac{1}{2}\int_{\gamma_j}\frac{S_2}{r^2}\psi_3\varphi_1^3\,dw, \nonumber
\end{align}
or the fact that $\int_{\gamma_j}K_1\varphi_1^k\,dw=0$. After regrouping we obtain an expression
\begin{align}
 J_{5j}^{(2)} &= \frac{1}{2}c_3a_{2j}^3-\frac{1}{2}c_3c_2a_{2j}^2 +(-\frac{1}{8}c_3^2+\frac{1}{4}c_3c_2^2)a_{2j} +\frac{1}{2}c_3\psi_{3j}a_{2j}-\frac{1}{6}c_3c_2\psi_{3j} \nonumber \\
&\phantom{=} +\int_{\gamma_j}\frac{(-\frac{1}{8}c_3^2-\frac{3}{4}c_3c_2^2)S_2r-\frac{1}{3}c_3c_2S_3}{r^3}\,\varphi_1^5\,dw, \nonumber \\
&\phantom{=} +\int_{\gamma_j}\frac{2c_3c_2^2S_2r+\frac{1}{2}c_3c_2S_3-2c_3c_2S_2\psi_2r-\frac{1}{2}c_3S_3\psi_2}{r^3}\,\varphi_1^4\,dw, \label{eq:J5(2)} \\
&\phantom{=} +\int_{\gamma_j}\frac{(\frac{1}{4}c_3^2-\frac{3}{2}c_3c_2^2)S_2+3c_3c_2S_2\psi_2-\frac{3}{2}c_3S_2\psi_2^2-\frac{1}{2}c_3S_2\psi_3}{r^2}\,\varphi_1^3\,dw. \nonumber
\end{align}

Note that by Proposition \ref{prop:thirdvar}, $J_{7j}=\int_{\gamma_j}K_4(\phi_2^2+\frac{1}{2}c_3(\varphi_1^2-1)+\psi_3)\varphi_1^3\,dw $. Regrouping we get $J_{7j}=\int_{\gamma_j}K_4(\phi_2^2\varphi_1^3-\frac{1}{2}c_3\varphi_1^3+\psi_3\varphi_1^3+\frac{1}{2}c_3\varphi_1^5)\,dw$. Since the integral $J_{8j}$ is defined to be $\int_{\gamma_j}K_4\phi_2^2\varphi_1^3\,dw$ we see that 
\[ J_{7j}=J_{8j}+\int_{\gamma_j}\left(\frac{S_4}{r^4}+c_4K_1\right)\left(-\frac{1}{2}c_3\varphi_1^3+\psi_3\varphi_1^3+\frac{1}{2}c_3\varphi_1^5\right)\,dw. \]
Expanding the above product and using the integration by parts formula (\ref{eq:intbypartsformulas}) we obtain
\begin{equation}\label{eq:J7}
 J_{7j}=J_{8j}+\frac{1}{3}c_4\psi_{3j}+\int_{\gamma_j}\frac{\frac{1}{2}c_3S_4-\frac{1}{3}c_4S_3r}{r^4}\,\varphi_1^5\,dw + \int_{\gamma_j}\frac{-\frac{1}{2}c_3S_4+S_4\psi_3}{r^4}\,\varphi_1^3\,dw.
\end{equation}

Now, let us also split $J_{8j}=\int_{\gamma_j}K_4\phi_2^2\varphi_1^3\,dw$ as $J_{8j}=J_{8j}^{(1)}+J_{8j}^{(2)}$ with
\[ J_{8j}^{(1)}=\int_{\gamma_j}\frac{S_4}{r^4}\phi_2^2\varphi_1^3\,dw, \quad 
   J_{8j}^{(2)}=\int_{\gamma_j}c_4K_1\phi_2^2\varphi_1^3\,dw. \]
Expanding and substituting $\phi_2^2=\big(c_2(\varphi_1-1)+\psi_2\big)^2$ into the above expressions we obtain
\begin{align}
 J_{8j}^{(1)} &= 
\int_{\gamma_j}\frac{c_2^2S_4}{r^4}\,\varphi_1^5\,dw + \int_{\gamma_j}\frac{-2c_2^2S_4+2c_2S_4\psi_2}{r^4}\,\varphi_1^4\,dw + \int_{\gamma_j}\frac{c_2^2S_4-2c_2S_4\psi_2+S_4\psi_2^2}{r^4}\,\varphi_1^3\,dw, \label{eq:J8(1)} \\
 J_{8j}^{(2)} &= 
\frac{1}{3}c_4a_{2j}^2-\frac{1}{6}c_4c_2a_{2j}+\int_{\gamma_j}\frac{-\frac{1}{2}c_4c_2S_2}{r^2}\,\varphi_1^5\,dw+\int_{\gamma_j}\frac{\frac{2}{3}c_4c_2S_2-\frac{2}{3}c_4S_2\psi_2}{r^2}\,\varphi_1^4\,dw. \label{eq:J8(2)}
\end{align}

For the last integrals $J_{9j}$ and $J_{10j}$ we proceed in an analogous way. We obtain
\begin{align}
 J_{9j}&=\frac{1}{4}c_5a_{2j}+\int_{\gamma_j}\frac{c_2S_5-\frac{1}{4}c_5S_2r^3}{r^5}\,\varphi_1^5\,dw+\int_{\gamma_j}\frac{-c_2S_5+S_5\psi_2}{r^5}\,\varphi_1^4\,dw, \label{eq:J9} \\
 J_{10j}&=\int_{\gamma_j}\frac{S_6}{r^6}\,\varphi_1^5\,dw. \label{eq:J10}
\end{align}

If we now substitute in (\ref{eq:a6-v1}) the expressions we have found for $J_{5j},\ldots,J_{10j}$ given by equations (\ref{eq:J5(1)}) to (\ref{eq:J10}), we obtain
\begin{align*}
 a_{6j} &= 
2a_{5j}a_{2j}+3a_{4j}a_{3j}-4a_{4j}a_{2j}^2-5a_{3j}^2a_{2j}+7a_{3j}a_{2j}^3-2a_{2j}^5 \\
&\phantom{=}+\frac{c_3}{2}a_{2j}^3+\left(c_4-\frac{c_3c_2}{2}\right)a_{2j}^2+\left(\frac{3c_5}{4}-\frac{c_4c_2}{2}-\frac{c_3^2}{8}+\frac{c_3c_2^2}{4}+\frac{c_3}{2}\psi_{3j}\right)a_{2j} \\
&\phantom{=}-\frac{1}{2}c_2\psi_{3j}^2+\left(\frac{c_4}{3}-\frac{c_3c_2}{6}\right)\psi_{3j} +D_{2j}+D_{3j}+D_{4j}+D_{5j},
\end{align*}
where we have grouped all integrals containing $\varphi_1^k$ into a single expression $D_{kj}$. This expressions are given explicitly below.
\begin{align*}
 D_{2j} &= \int_{\gamma_j}\frac{(-c_2^3+\frac{1}{2}c_3c_2)S_3+(3c_2^2-\frac{1}{2}c_3)S_3\psi_2-3c_2S_3\psi_2^2+S_3\psi_2^3+S_3\psi_3\psi_2}{r^3}\,\varphi_1^2\,dw  \\
	&=  (-c_2^3+\frac{1}{2}c_3c_2)\psi_{3j} +(3c_2^2-\frac{1}{2}c_3)\Delta_{1j}-3c_2\Delta_{2j} +\Delta_{3j} +\Delta_{(1,1)j}.
\end{align*}

Recall that $q_4=S_4+c_2S_3r-\frac{1}{2}c_3S_2r^2$. We have:
\begin{align*}
 D_{3j} &= \int_{\gamma_j}\frac{(3c_2^2-\frac{1}{2}c_3)(S_4+c_2S_3r-\frac{1}{2}c_3S_2r^2)-6c_2(S_4+c_2S_3r-\frac{1}{2}c_3S_2r^2)\psi_2}{r^4}\,\varphi_1^3\,dw  \\
&\phantom{=} +\int_{\gamma_j}\frac{3(S_4+c_2S_3r-\frac{1}{2}c_3S_2r^2)\psi_2^2+(S_4+c_2S_3r-\frac{1}{2}c_3S_2r^2)\psi_3}{r^4}\,\varphi_1^3\,dw  \\
	&= (3c_2^2-\frac{1}{2}c_3)\psi_{4j}-6c_2\Gamma_{1j}+3\Gamma_{2j}+\Gamma_{(0,1)j}.
\end{align*}

Recall also that $q_5=S_5+2c_2S_4r+c_2^2S_3r^2-\frac{2}{3}(c_4+c_3c_2)S_2r^3$. Thus,
\begin{align*}
 D_{4j} &= \int_{\gamma_j}\frac{-3c_2\left(S_5+2c_2S_4r+c_2^2S_3r^2-\frac{2}{3}(c_4+c_3c_2)S_2r^3\right)}{r^5}\,\varphi_1^4\,dw \\
&\phantom{=} +\int_{\gamma_j}\frac{3\left(S_5+2c_2S_4r+c_2^2S_3r^2-\frac{2}{3}(c_4+c_3c_2)S_2r^3\right)\psi_2}{r^5}\,\varphi_1^4\,dw  \\
	&= -3c_2\psi_{5j}+3\mathrm{B}_{1j}.
\end{align*}

Lastly, we obtain
\begin{align*}
 D_{5j} &= \int_{\gamma_j}\frac{S_6+3c_2S_5r+\left(\frac{1}{2}c_3+3c_2^2\right)S_4r^2+\left(-\frac{1}{3}c_4+\frac{1}{6}c_3c_2+c_2^3\right)S_3r^3}{r^6}\,\varphi_1^5\,dw \\
&\phantom{=} +\int_{\gamma_j}\frac{\left(-\frac{3}{4}c_5-\frac{3}{2}c_4c_2-\frac{1}{8}c_3^2-\frac{3}{4}c_3c_2^2\right)S_2r^4}{r^6}\,\varphi_1^5\,dw,  \\
\end{align*}
which is exactly $\psi_{6j}$, by definition of $q_6(w)$.

In this way we obtain exactly the expression claimed by Proposition \ref{prop:sixthvar}, hence concluding its proof.
\end{proof}

\section{Proof of the Key lemma}\label{sec:keylemma}

We now proceed to prove the \hyperref[lemma:key]{Key lemma}. Let us consider now a normalized foliation $\Ft$ whose holonomy group at infinity is analytically conjugate to the holonomy group of $\F$. The genericity assumptions imposed on $\F$ and the way we have normalized imply that both foliations have the same characteristic numbers at infinity at the same singular points. Therefore if $\F=\F(\lambda,\alpha)$, we may write $\Ft=\F(\lambda,\beta)$. For every object we have defined for foliation $\F$ we define the analogous object for $\Ft$ and denote it by the same symbol with a tilde on top. In particular $\tilde{f}_1$ and $\tilde{f}_2$ denote the corresponding distinguished parabolic germs which are defined as the holonomy maps along the same loops $\gamma_1$ and $\gamma_2$ from Definition \ref{def:geometricgenerators}. By the conjugacy of the holonomy groups, and in virtue of Remark \ref{rmk:strongae}, there exists a conformal germ $h\in\Diff$ such that
\begin{equation}\label{eq:conjugatef}
h\circ f_j-\tilde{f}_j\circ h=0,\quad j=1,2. 
\end{equation}

We reemphasize that the idea of the \hyperref[lemma:key]{Key lemma} is to show that the above equation imposes certain conditions on the parameter $\beta$. We do this by proving the existence of polynomials $P_d(w)$, whose coefficients depend on $\lambda$, $\alpha$ and $\beta$, with the property that if equation (\ref{eq:conjugatef}) holds up to jets of order $d$ then 
\[ \int_{\gamma_1}\frac{P_d(w)}{r(w)^d}\,\varphi_1(w)^{d-1}\,dw = 0. \]

We will first compare the terms of degree 2 in equation (\ref{eq:conjugatef}) and prove that the normal form (\ref{eq:normalform}) that we have chosen forces the germ $h$ to be parabolic. The \hyperref[lemma:key]{Key lemma} for degree $d=3$ will be a corollary of this fact. Once we have done this we will prove the \hyperref[lemma:key]{Key lemma} for higher degrees, one degree at the time, following the strategy explained in Subsection \ref{subsec:keylemma}.

\subsection{Comparison of the terms of low degree}

We start with an important observation about the normal form (\ref{eq:normalform}).

\begin{proposition}\label{prop:S2=r}
 The polynomial $S_2(w)$ defined in Proposition \textnormal{\ref{prop:c_dS_d}} by the property $K_2=c_2K_1+\frac{S_2}{r^2}$ is exactly $S_2(w)=r(w)$. In particular the function  
\[ \psi_2(w)=\int_0^w\frac{S_2}{r^2}\varphi_1\,dt=\int_0^w\frac{1}{r}\varphi_1\,dt \]
depends only on the characteristic numbers $\lambda_1,\lambda_2$ and not on the parameter $\alpha$, and so we have $\tilde{\psi}_2(w)=\psi_2(w)$.
\end{proposition}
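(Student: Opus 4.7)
The plan is to compute $S_2(w)$ directly from the splitting $\Psi = C + S$ given in Definition \ref{def:splittingF} and read off the coefficient of $z^2$ in $S(z,w)$. By definition of the polynomials $S_d$, this coefficient is precisely $S_2(w)/r(w)^2$, so the whole proposition reduces to a leading-order computation in $z$.

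First I would bring $\Psi - C$ to the common denominator $r(w)(1+\alpha_0\sigma z)\bigl[r(w)(1+\alpha_0\sigma z) + p(w)z^2\bigr]$. Because the numerators of $\Psi$ and $C$ share the term $s(w)(1+\alpha_0 z)$, the product $s(w)(1+\alpha_0 z)\cdot r(w)(1+\alpha_0\sigma z)$ cancels, leaving the numerator of $\Psi - C$ in the form
\[
z\bigl\{(z+\eta z^2)\,r(w)(1+\alpha_0\sigma z) - s(w)(1+\alpha_0 z)\,p(w)\,z^2\bigr\}.
\]
The lowest-order term in $z$ of this expression is $r(w)\,z^2$, and the common denominator has leading term $r(w)^2$. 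Dividing, one obtains $S(z,w) = z^2/r(w) + O(z^3)$, which forces $S_2(w) = r(w)$.

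The second assertion is then immediate: substituting $S_2(w) = r(w)$ into the formula for $\psi_2$ from Proposition \ref{prop:secondvar} yields $\psi_2(w) = \int_0^w \varphi_1(t)/r(t)\,dt$, and since $\varphi_1(w) = (1+w)^{\lambda_1}(1-w)^{\lambda_2}$ depends only on $\lambda_1,\lambda_2$, so does $\psi_2$. The identity $\tilde{\psi}_2 = \psi_2$ then follows because $\F$ and $\Ft$ share the same $\lambda$.

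The calculation is entirely routine; the only point requiring care is tracking the cancellation in the numerator of $\Psi - C$. Conceptually, this cancellation reflects the fact that the normal form \textnormal{(\ref{eq:normalform})} was designed precisely so that the non-commutative correction $S(z,w)$ starts with a term independent of the parameters $\alpha_1,\alpha_2$, which is exactly what makes $\psi_2$ depend only on the characteristic numbers at infinity.
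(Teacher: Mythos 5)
Your computation is correct and is exactly the routine expansion the paper alludes to (the paper omits the proof, saying it amounts to expanding the right-hand side of the normal form in powers of $z$ and reading off the quadratic coefficient, with the resulting $S_2(w)=r(w)$ listed explicitly at the start of Section~\ref{sec:elimination}). The cancellation of $s(w)(1+\alpha_0 z)\,r(w)(1+\alpha_0\sigma z)$ in $\Psi-C$ and the identification $S_2/r^2=1/r$ are carried out correctly, so nothing is missing.
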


This proposition is proved by just expanding $F(z,w)$ in a power series and computing the quadratic coefficient $K_2$. We omit the proof here since we shall give explicit expression for all the terms $S_d$ and $c_d$ at the begining of Section \ref{sec:elimination}.

\begin{proposition}\label{prop:h2}
If $h\in\Diff$ conjugates the holonomy groups of $\F$ and $\Ft$ then $h$ is necessarily a parabolic germ and its quadratic coefficient $h_2=\frac{1}{2}h''(0)$ is given by $h_2=\tilde{c}_2-c_2$, with $c_2,\tilde{c}_2$ as in Proposition \textnormal{\ref{prop:c_dS_d}}
\end{proposition}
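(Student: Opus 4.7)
The plan has two steps: first, show that $h$ must be parabolic by examining the degree-$2$ equation for the commutator $f_1$; then, extract $h_2$ by passing from $f_j$ to the geometric generator $\Delta_{\mu_j}$, whose nontrivial linear part $\nu_j := e^{2\pi i \lambda_j}$ is what couples $h_2$ into a second-order computation.

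For the parabolic step, I would compare coefficients of $z^2$ on both sides of $h\circ f_1=\tilde f_1\circ h$ to obtain $h_1\,a_{21}=h_1^2\,\tilde a_{21}$. By Proposition~\ref{prop:secondvar}, $a_{21}=\psi_{21}$ and $\tilde a_{21}=\tilde\psi_{21}$; by Proposition~\ref{prop:S2=r}, $S_2=\tilde S_2=r$, so $\psi_2=\tilde\psi_2$ as functions and hence $a_{21}=\tilde a_{21}$. Since the genericity assumption (iii) of Subsection~\ref{subsec:genericity} amounts precisely to $a_{21}\neq 0$, we conclude $h_1=1$.

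The key observation for the second step is that $h_2$ cannot be extracted from $h\circ f_j=\tilde f_j\circ h$ alone: both sides are parabolic with matching quadratic coefficient, so the degree-$3$ equation collapses to $a_{3j}=\tilde a_{3j}$ with no trace of $h_2$. Instead, by Remark~\ref{rmk:strongae} the conjugacy of the full holonomy groups gives $h\circ\Delta_{\mu_j}=\tilde\Delta_{\mu_j}\circ h$ for $j=1,2$. Remark~\ref{rmk:a_dj=phi_d} together with $\varphi_2=\varphi_1\phi_2$ yields the expansion $\Delta_{\mu_j}(z)=\nu_j z+\nu_j\,\phi_{2\{\mu_j\}}(0)\,z^2+O(z^3)$. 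Comparing $z^2$ coefficients, with $h_1=1$ in hand, the equation reduces to
\[
\phi_{2\{\mu_j\}}(0)-\tilde\phi_{2\{\mu_j\}}(0)=h_2\,(1-\nu_j).
\]

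To finish, Proposition~\ref{prop:secondvar} provides the decomposition $\phi_2(w)=c_2\bigl(\varphi_1(w)-1\bigr)+\psi_2(w)$; since $\varphi_1$ picks up the monodromy factor $\nu_j$ upon analytic continuation along $\mu_j$, we have $\phi_{2\{\mu_j\}}(0)=c_2(\nu_j-1)+\psi_{2\{\mu_j\}}(0)$, and Proposition~\ref{prop:S2=r} ensures $\tilde\psi_{2\{\mu_j\}}(0)=\psi_{2\{\mu_j\}}(0)$. Therefore $\phi_{2\{\mu_j\}}(0)-\tilde\phi_{2\{\mu_j\}}(0)=(c_2-\tilde c_2)(\nu_j-1)$. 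Substituting gives $h_2(1-\nu_j)=-(c_2-\tilde c_2)(1-\nu_j)$, and since genericity (ii) forces $\lambda_j\notin\Z$ and hence $\nu_j\neq 1$, we may cancel to conclude $h_2=\tilde c_2-c_2$. The one step that requires insight—rather than routine expansion—is the switch from the parabolic elements $f_j$ to the non-parabolic geometric generators $\Delta_{\mu_j}$; after that move, the computation is a single second-order comparison exploiting Propositions~\ref{prop:secondvar} and~\ref{prop:S2=r}.
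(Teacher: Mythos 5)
Your proposal is correct and follows essentially the same route as the paper: the parabolicity of $h$ is forced by $a_{21}=\tilde a_{21}\neq 0$ (via Propositions \ref{prop:secondvar} and \ref{prop:S2=r}), and $h_2$ is then extracted from the order-$2$ comparison of the conjugacy along a non-parabolic geometric generator $\Delta_{\mu_j}$, using $\varphi_2=\varphi_1\phi_2$, the decomposition $\phi_2=c_2(\varphi_1-1)+\psi_2$, and $\tilde\psi_2=\psi_2$. The only cosmetic difference is that the paper divides by $\varphi_{1\{\gamma\}}(0)\bigl(\varphi_{1\{\gamma\}}(0)-1\bigr)$ at the end rather than factoring out $\nu_j$ first, and it phrases the parabolicity step as a general fact about conjugating parabolic germs with equal non-zero quadratic part instead of writing out $h_1a_{21}=h_1^2\tilde a_{21}$.
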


\begin{proof}
If the germ $h$ conjugates the holonomy groups it conjugates the distinguished parabolic germs, which by genericity hypothesis have non--zero quadratic part. By Proposition \ref{prop:secondvar} the quadratic coefficient in the power series of $f_j$ is $a_{2j}=\psi_{2j}$, and by Proposition \ref{prop:S2=r} $\psi_2(w)$ depends only on the characteristic numbers $\lambda_1$, $\lambda_2$. This implies that $a_{2j}=\tilde{a}_{2j}$. Any germ that conjugates two parabolic germs with equal non--zero quadratic part must be parabolic itself, hence $h$ is parabolic.

We now prove the second claim. This is the only instance in this paper where we will consider holonomy maps other than the distinguished parabolic germs. Choose any holonomy map $\Delta_\gamma$ that is not parabolic (for example, choose $\gamma=\mu_1$, a standard geometric generator) and consider its power series expansion: $\Delta_\gamma=\var{1}{}\,z+\var{2}{}\,z^2+O(z^3)$. We also consider the corresponding power series expansion for $\widetilde{\Delta}_\gamma$. Taking into account that $\widetilde{\varphi}_1=\varphi_1$, an easy computation shows that $h\circ\Delta_\gamma-\widetilde{\Delta}_\gamma\circ h$ has a power series expansion of the form
\[ \left(\var{2}{}-\vart{2}{}+h_2\var{1}{}(\var{1}{}-1)\right)\,z^2+O(z^3), \]
which implies
\[ h_2=\frac{\vart{2}{}-\var{2}{}}{\var{1}{}(\var{1}{}-1)} \]
since $h\circ\Delta_\gamma-\widetilde{\Delta}_\gamma\circ h\equiv 0$. Now, we use the relation $\varphi_2=\varphi_1\phi_2$ and Proposition \ref{prop:secondvar} to simplify the numerator. Taking into account that $\psi_2(w)=\tilde{\psi}_2(w)$, we get that $h_2=\tilde{c}_2-c_2$.
\end{proof}

We remark that the fact that $h$ is forced to be parabolic depends strongly on the fact that both $\F$ and $\Ft$ have been normalized as in (\ref{eq:normalform}). Without this normalization the above proposition need not hold.

In virtue of the above proposition we may write
\[ h(z)=z+\sum_{d=2}^{\infty}h_dz^d. \]

\begin{proposition}\label{prop:key3}
Define $P_3(w)=\widetilde{S}_3(w)-S_3(w)$. If a germ $h\in\Diff$ conjugates corresponding pairs of distinguished parabolic germs up to $3$--jets then
\[ \int_{\gamma_1}\frac{P_3(w)}{r(w)^3}\,\varphi_1(w)^2\,dw=0. \]
\end{proposition}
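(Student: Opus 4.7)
The plan is to compare the coefficients of $z^3$ in the conjugacy equation $h \circ f_j - \tilde{f}_j \circ h = 0$ for $j = 1, 2$ and extract the desired integral identity essentially for free.

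First I would expand $h \circ f_j - \tilde{f}_j \circ h$ as a power series in $z$, using that $h$ is parabolic (by Proposition \ref{prop:h2}), so
\[ h(z) = z + h_2 z^2 + h_3 z^3 + O(z^4), \qquad f_j(z) = z + a_{2j} z^2 + a_{3j} z^3 + O(z^4), \]
and analogously for $\tilde{f}_j$. A direct computation gives
\[ h \circ f_j - \tilde{f}_j \circ h = (a_{2j} - \tilde{a}_{2j})\,z^2 + \bigl(a_{3j} - \tilde{a}_{3j} + 2 h_2 (a_{2j} - \tilde{a}_{2j})\bigr)\,z^3 + O(z^4); \]
note that the unknown coefficient $h_3$ cancels identically, as is typical in such comparisons for parabolic germs.

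Next I would invoke Proposition \ref{prop:S2=r}, which gives $\psi_2 \equiv \tilde{\psi}_2$, hence $a_{2j} = \psi_{2j} = \tilde{\psi}_{2j} = \tilde{a}_{2j}$ for $j = 1, 2$ by Proposition \ref{prop:secondvar}. This kills both the $z^2$ coefficient and the $h_2$-term in the $z^3$ coefficient. The hypothesis that $h$ conjugates $f_j$ and $\tilde{f}_j$ up to $3$-jets then reduces to
\[ a_{3j} - \tilde{a}_{3j} = 0, \qquad j = 1, 2. \]

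Finally, I would apply Proposition \ref{prop:thirdvar}, which asserts $a_{3j} = a_{2j}^2 + \psi_{3j}$ (and analogously $\tilde{a}_{3j} = \tilde{a}_{2j}^2 + \tilde{\psi}_{3j}$). Using $a_{2j} = \tilde{a}_{2j}$, this yields $\psi_{3j} = \tilde{\psi}_{3j}$, i.e.
\[ \int_{\gamma_j} \frac{\widetilde{S}_3(w) - S_3(w)}{r(w)^3}\, \varphi_1(w)^2\, dw = \int_{\gamma_j} \frac{P_3(w)}{r(w)^3}\, \varphi_1(w)^2\, dw = 0. \]
Specializing to $j = 1$ gives the statement. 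The argument is essentially a bookkeeping exercise; there is no real obstacle, since the normalization built into (\ref{eq:normalform}) (namely $S_2 = r$) does the work of eliminating every unknown quantity, so no appeal to Lemma \ref{lemma:integrals1} or Proposition \ref{prop:key} is needed at this degree.
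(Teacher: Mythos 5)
Your proposal is correct and follows essentially the same route as the paper: the paper obtains $a_{3j}=\tilde{a}_{3j}$ by noting that the group of $3$--jets of parabolic germs is commutative (so conjugation by the parabolic $h$ fixes the $3$--jet), which is exactly the abstract form of your explicit observation that $h_3$ cancels and the $h_2$--term is killed by $a_{2j}=\tilde{a}_{2j}$. From there both arguments conclude identically via Proposition \ref{prop:thirdvar} and $\tilde{\psi}_2=\psi_2$.
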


\begin{proof}
It is easy to check that the commutator of any two parabolic germs is of the form $z+O(z^4)$. This implies that the group of 3--jets of parabolic germs is commutative, in particular $f_j$ and $\tilde{f}_j$ have the same 3--jet since $h\circ f_j=\tilde{f}_j\circ h$ and all these germs are parabolic. This tells us that $a_{3j}=\tilde{a}_{3j}$ and moreover $\psi_{3j}=\widetilde{\psi}_{3j}$ since, by Proposition \ref{prop:thirdvar},  $a_{3j}=a_{2j}^2+\psi_{3j}$, and $\tilde{a}_{2j}=a_{2j}$. Recall that we have defined $\psi_{3j}=\int_{\gamma_j}\frac{S_3}{r^3}\,\varphi_1^2\,dw$. Hence,
\[0=\widetilde{\psi}_{31}-\psi_{31}=\int_{\gamma_1}\frac{\widetilde{S}_3-S_3}{r^3}\,\varphi_1^2\,dw=\int_{\gamma_1}\frac{P_3}{r^3}\,\varphi_1^2\,dw. \]
\end{proof}

Before moving on to the \hyperref[lemma:key]{Key Lemma} for degree four, we will use Lemma \ref{lemma:Pyartli} to introduce a polynomial $R_3(w)$ needed in the next subsection (see Subsection \ref{subsec:keytomain} for the general description of the polynomials $R_d(w)$).

\begin{proposition}\label{prop:R3}
If $\lambda_1,\lambda_2\notin\frac{1}{2}\Z$ there exists a polynomial $R_3(w)$ such that 
\[ \int_0^w\frac{P_3(t)}{r(t)^3}\,\varphi_1(t)^2\,dt=\frac{R_3(w)}{r(w)^2}\,\varphi_1(w)^2-R_3(0). \]
\end{proposition}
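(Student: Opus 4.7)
The plan is to obtain Proposition \ref{prop:R3} as a direct application of Lemma \ref{lemma:Pyartli} (Pyartli's lemma) combined with the vanishing established in Proposition \ref{prop:key3}. The starting point is the observation that the integrand $\frac{P_3(w)}{r(w)^3}\varphi_1(w)^2$ can be rewritten in the product form $P(w)\zeta(w)$ demanded by Lemma \ref{lemma:Pyartli}. Since $\varphi_1(w)=(1+w)^{\lambda_1}(1-w)^{\lambda_2}$ and $r(w)=(w-1)(w+1)=-(1-w)(1+w)$, we have
\[
\frac{\varphi_1(w)^2}{r(w)^3}=-(1+w)^{2\lambda_1-3}(1-w)^{2\lambda_2-3},
\]
so I would set $\zeta(w)=(1+w)^{2\lambda_1-3}(1-w)^{2\lambda_2-3}$ (which satisfies $\zeta(0)=1$) and $P(w)=-P_3(w)$, so the integrand becomes $P(w)\zeta(w)$ with exponents $u_1=2\lambda_1-3$ and $u_2=2\lambda_2-3$.

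The hypothesis $\lambda_1,\lambda_2\notin\frac{1}{2}\Z$ is exactly what guarantees $u_1,u_2\notin\Z$, so Lemma \ref{lemma:Pyartli} is applicable. Proposition \ref{prop:key3} provides $\int_{\gamma_1}P(w)\zeta(w)\,dw=0$, and hence Lemma \ref{lemma:Pyartli} furnishes a polynomial $\widetilde{R}(w)$ and a constant $C\in\C$ such that
\[
\int_0^w P(t)\zeta(t)\,dt=\widetilde{R}(w)\,r(w)\,\zeta(w)+C.
\]
Undoing the sign change, i.e.\ setting $R_3(w):=-\widetilde{R}(w)$, and multiplying the identity $r(w)\zeta(w)=-\varphi_1(w)^2/r(w)^2$ through, yields
\[
\int_0^w\frac{P_3(t)}{r(t)^3}\,\varphi_1(t)^2\,dt=\frac{R_3(w)}{r(w)^2}\,\varphi_1(w)^2+C'
\]
for some new constant $C'$. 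Evaluating at $w=0$ (where $\varphi_1(0)=1$ and $r(0)^2=1$) gives $0=R_3(0)+C'$, fixing $C'=-R_3(0)$ and completing the proof.

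The whole argument is essentially bookkeeping; the only genuine ingredients are (i) the vanishing of the integral along $\gamma_1$, which is Proposition \ref{prop:key3}, and (ii) the ``if'' direction of Lemma \ref{lemma:Pyartli}. No obstacle is expected beyond taking care of the $(-1)$ signs when converting between $r(w)^3$ and $(1-w)^3(1+w)^3$, and checking that $u_1,u_2\notin\Z$ under the stated genericity condition on $\lambda_1,\lambda_2$.
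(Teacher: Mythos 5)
Your argument is correct and is exactly the paper's proof: the paper simply cites Lemma \ref{lemma:Pyartli} with $P(w)=P_3(w)$ and $u_j=2\lambda_j-3$, using the vanishing from Proposition \ref{prop:key3}. Your extra care with the sign coming from $r(w)^3=-(1-w)^3(1+w)^3$ and with pinning down the constant by evaluating at $w=0$ is sound bookkeeping that the paper leaves implicit.
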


\begin{proof}
The above proposition is exactly Lemma \ref{lemma:Pyartli} with $P(w)=P_3(w)$ and $u_j=2\lambda_j-3$.
\end{proof}

\subsection{Key lemma for degree four}

In Subsection \ref{subsec:keylemma} we have reduced the proof of the \hyperref[lemma:key]{Key lemma} on degree 4 to the proof of existence of a polynomial $P_4(w)$ and a complex number $\mathcal{C}_4$ such that
\[ a_{2j}\,\mathcal{C}_4+\mathcal{I}_{4j}=0, \quad j=1,2, \]
where $\mathcal{I}_{4j}=\int_{\gamma_j}\frac{P_4}{r^4}\,\varphi_1^{3}\,dw$. Thus, in order to prove the next proposition we shall prove the existence of a polynomial $P_4$ and a number $\mathcal{C}_4$ satisfying the above conditions and cite Proposition \ref{prop:key}.

\begin{proposition}\label{prop:key4}
Let $P_4(w)=\tilde{q}_4(w)-q_4(w)-S_2(w)R_3(w)$ with the polynomials $q_4(w)$ as in Proposition \textnormal{\ref{prop:fourthvar}} and $R_3(w)$ as in Proposition \textnormal{\ref{prop:R3}}. If a germ $h\in\Diff$ conjugates corresponding pairs of distinguished parabolic germs up to $4$--jets then
\[ \int_{\gamma_1}\frac{P_4(w)}{r(w)^4}\,\varphi_1(w)^3\,dw=0. \]

Moreover the cubic coefficient in the power series of $h$ is given by
\begin{equation}\label{eq:h3}
 h_3=h_2^2+\frac{\tilde{c}_3-c_3}{2}+R_3(0). 
\end{equation}
\end{proposition}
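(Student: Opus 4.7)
The plan is to execute, at degree $d=4$, the general strategy set up in Subsection \ref{subsec:keylemma}: equate two expressions for $\tilde a_{4j}-a_{4j}$ (one coming from comparing coefficients of $h\circ f_j-\tilde f_j\circ h$, the other coming from the closed formula in Proposition \ref{prop:fourthvar}) and then apply Proposition \ref{prop:key}.

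First I would extract the degree $4$ coefficient of $h\circ f_j-\tilde f_j\circ h$, using that $h(z)=z+h_2z^2+h_3z^3+\cdots$ is parabolic (Proposition \ref{prop:h2}) and that $a_{2j}=\tilde a_{2j}$, $a_{3j}=\tilde a_{3j}$ (the last equality is the content of Proposition \ref{prop:key3}). A direct calculation yields
\[
0=(a_{4j}-\tilde a_{4j})+a_{2j}(h_3-h_2^2)+h_2(a_{2j}^2-a_{3j}),
\]
and combining with $a_{3j}=a_{2j}^2+\psi_{3j}$ (Proposition \ref{prop:thirdvar}) gives $\tilde a_{4j}-a_{4j}=a_{2j}(h_3-h_2^2)-h_2\psi_{3j}$.

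Next I would compute $\tilde a_{4j}-a_{4j}$ by subtracting the formulas of Proposition \ref{prop:fourthvar} for the two foliations, using $\tilde\psi_{3j}=\psi_{3j}$ and $\tilde\psi_2=\psi_2$. All terms not involving $\tilde c_3-c_3$, $\tilde c_2-c_2=h_2$, $\tilde S_3-S_3=P_3$ or $\tilde q_4-q_4$ cancel, leaving
\[
\tilde a_{4j}-a_{4j}=\tfrac{\tilde c_3-c_3}{2}a_{2j}-h_2\psi_{3j}+\int_{\gamma_j}\frac{P_3}{r^3}\psi_2\varphi_1^2\,dw+\int_{\gamma_j}\frac{\tilde q_4-q_4}{r^4}\varphi_1^3\,dw.
\]
The key manipulation is to rewrite the first integral using Proposition \ref{prop:R3}: since $\bigl(R_3r^{-2}\varphi_1^2\bigr)'=P_3r^{-3}\varphi_1^2$, integration by parts along $\gamma_j$ yields
\[
\int_{\gamma_j}\frac{P_3}{r^3}\psi_2\varphi_1^2\,dw=\bigl[R_3r^{-2}\varphi_1^2\psi_2\bigr]_{\gamma_j}-\int_{\gamma_j}\frac{R_3}{r^3}\varphi_1^3\,dw,
\]
where I used $\psi_2'=S_2r^{-2}\varphi_1=r^{-1}\varphi_1$ (Proposition \ref{prop:S2=r}). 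The boundary term equals $R_3(0)a_{2j}$, because along the commutator loop $\gamma_j$ one has $(\varphi_1^2)_{\{\gamma_j\}}(0)=1$, $\psi_{2,\{\gamma_j\}}(0)=a_{2j}$ and $\psi_2(0)=0$. Using $S_2=r$ again, the remaining integral is $\int_{\gamma_j}S_2R_3r^{-4}\varphi_1^3\,dw$, so everything combines into
\[
\tilde a_{4j}-a_{4j}=a_{2j}\Bigl(\tfrac{\tilde c_3-c_3}{2}+R_3(0)\Bigr)-h_2\psi_{3j}+\int_{\gamma_j}\frac{\tilde q_4-q_4-S_2R_3}{r^4}\varphi_1^3\,dw.
\]

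Equating the two expressions for $\tilde a_{4j}-a_{4j}$ and simplifying, the terms $-h_2\psi_{3j}$ cancel and I obtain
\[
a_{2j}\Bigl(h_2^2-h_3+\tfrac{\tilde c_3-c_3}{2}+R_3(0)\Bigr)+\int_{\gamma_j}\frac{P_4}{r^4}\varphi_1^3\,dw=0,\qquad j=1,2,
\]
which is precisely the form $a_{2j}\mathcal{C}_4+\mathcal{I}_{4j}=0$ required by Proposition \ref{prop:key} (with $d=4$, for which $\lambda_1\notin\tfrac12\Z$ is implied by the genericity assumption $\lambda_1\notin\tfrac14\Z$). That proposition then forces $\mathcal{I}_{4j}=0$ (giving the desired vanishing of the integral in the Key lemma for $d=4$) and $\mathcal{C}_4=0$ (which rearranges to the stated formula for $h_3$). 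The only delicate step is bookkeeping the boundary term of the integration by parts around the commutator loop; once the identities $\varphi_{1,\{\gamma_j\}}(0)=1$ and $\psi_{2,\{\gamma_j\}}(0)=a_{2j}$ are used, the proof reduces to an algebraic verification.
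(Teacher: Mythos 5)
Your proposal is correct and follows essentially the same route as the paper: extract the degree-$4$ coefficient of $h\circ f_j-\tilde f_j\circ h$, subtract the two instances of Proposition \ref{prop:fourthvar} using $\tilde\psi_2=\psi_2$ and $\tilde\psi_{3j}=\psi_{3j}$, integrate $\int_{\gamma_j}\frac{P_3}{r^3}\psi_2\varphi_1^2\,dw$ by parts via Proposition \ref{prop:R3} to produce the boundary term $R_3(0)a_{2j}$ and the $-\int_{\gamma_j}\frac{S_2R_3}{r^4}\varphi_1^3\,dw$ correction that turns $\tilde q_4-q_4$ into $P_4$, and then invoke Proposition \ref{prop:key}. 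The bookkeeping of the boundary term and the observation that $\lambda_1\notin\frac14\Z$ implies $\lambda_1\notin\frac12\Z$ are both handled correctly.
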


\begin{proof}
Taking into account that we know $\tilde{a}_{2j}=a_{2j}$ and $\tilde{a}_{3j}=a_{3j}$, a short computation shows that the coefficient of degree 4 in the power series expansion of $h\circ f_j-\tilde{f}_j\circ h$ is given by $(h_3-h_2^2)a_{2j}-h_2(a_{3j}-a_{2j}^2)-\tilde{a}_{4j}+a_{4j}$. This implies
\begin{equation}\label{eq:a4t-a4_v1}
 \tilde{a}_{4j}-a_{4j}=(h_3-h_2^2)a_{2j}-h_2(a_{3j}-a_{2j}^2), \quad j=1,2. 
\end{equation}

On the other hand, it follows from Proposition \ref{prop:fourthvar} that
\[ \tilde{a}_{4j}-a_{4j}=\frac{\tilde{c_3}-c_3}{2}a_{2j}-(\tilde{c}_2-c_2)\psi_{3j}+\widetilde{\Delta}_{1j}-\Delta_{1j}+\widetilde{\psi}_{4j}-\psi_{4j}. \]
In the above expression we are using the fact that $\tilde{a}_{2j}=a_{2j}$, $\tilde{a}_{3j}=a_{3j}$ and also that $\widetilde{\psi}_{3j}=\psi_{3j}$. Now, using the fact that $\widetilde{\psi}_2(w)=\psi_2(w)$ we see that
\[ \widetilde{\Delta}_{1j}-\Delta_{1j}=\int_{\gamma_j}\frac{\widetilde{S}_3-S_3}{r^3}\psi_2\varphi_1^2\,dw=\int_{\gamma_j}\frac{P_3}{r^3}\psi_2\varphi_1^2\,dw. \]
Using Proposition \ref{prop:R3} we can integrate by parts the last integral above to obtain
\begin{equation}\label{formula:Delta1t-Delta1} \widetilde{\Delta}_{1j}-\Delta_{1j}=\int_{\gamma_j}\left(\frac{R_3}{r^2}\varphi_1^2\right)^\prime\psi_2\,dw= R_3(0)a_{2j}-\int_{\gamma_j}\frac{R_3S_2}{r^4}\varphi_1^3\,dw, 
\end{equation}
Taking into account that we have defined $P_4=\tilde{q}_4-q_4-S_2R_3$ we see that
\begin{equation}\label{eq:a4t-a4_v2}
\tilde{a}_{4j}-a_{4j}=\frac{\tilde{c_3}-c_3}{2}a_{2j}-(\tilde{c}_2-c_2)\psi_{3j}+R_3(0)a_{2j}+\int_{\gamma_j}\frac{P_4}{r^4}\varphi_1^3\,dw, \quad j=1,2.
\end{equation}

We now substitute the right hand side of (\ref{eq:a4t-a4_v1}) into (\ref{eq:a4t-a4_v2}) to obtain an expression
\[ (h_3-h_2^2)a_{2j}-h_2(a_{3j}-a_{2j}^2)=\frac{\tilde{c_3}-c_3}{2}a_{2j}-(\tilde{c}_2-c_2)\psi_{3j}+R_3(0)a_{2j}+\int_{\gamma_j}\frac{P_4}{r^4}\varphi_1^3\,dw. \]
Recall that $h_2=\tilde{c}_2-c_2$ by Proposition \ref{prop:h2}, and recall also that $a_{3j}=a_{2j}^2+\psi_{3j}$ by Proposition \ref{prop:thirdvar}, therefore $(\tilde{c}_2-c_2)\psi_{3j}=h_2(a_{3j}-a_{2j}^2)$. The equation above is thus simplified to
\[ (h_3-h_2^2)a_{2j}=\left(\frac{\tilde{c_3}-c_3}{2}+R_3(0)\right)a_{2j}+\int_{\gamma_j}\frac{P_4}{r^4}\varphi_1^3\,dw, \]
which can be rewritten in the form
\[ a_{2j}\,\mathcal{C}_4+\mathcal{I}_{4j}=0, \]
where
\[ \mathcal{C}_4=\frac{\tilde{c_3}-c_3}{2}+R_3(0)+h_2^2-h_3, \]
and
\[ \mathcal{I}_{4j}=\int_{\gamma_j}\frac{P_4}{r^4}\,\varphi_1^3\,dw. \]
By Proposition \ref{prop:key} we have 
\[ \mathcal{I}_{41}=\int_{\gamma_1}\frac{P_4}{r^4}\varphi_1^3\,dw=0, \qquad \mathcal{C}_4=0. \]
This proves the \hyperref[lemma:key]{Key lemma} for degree four. Note that $\mathcal{C}_4=0$ implies
\[ h_3=h_2^2+\frac{\tilde{c}_3-c_3}{2}+R_3(0). \]
\end{proof}

We conclude this subsection by introducing the polynomial $R_4(w)$.
\begin{proposition}\label{prop:R4}
If $\lambda_1,\lambda_2\notin\frac{1}{3}\Z$ there exists a polynomial $R_4(w)$ such that 
\[ \int_0^w\frac{P_4(t)}{r(t)^4}\,\varphi_1(t)^3\,dt=\frac{R_4(w)}{r(w)^3}\,\varphi_1(w)^3+R_4(0). \]
\end{proposition}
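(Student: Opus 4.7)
The plan is to deduce Proposition \ref{prop:R4} as a direct application of Lemma \ref{lemma:Pyartli}, in exact analogy with the way Proposition \ref{prop:R3} was obtained. The first step is to rewrite the integrand $P_4(t)r(t)^{-4}\varphi_1(t)^3$ in the form $P_4(t)\zeta(t)$ with $\zeta$ of the shape required by Lemma \ref{lemma:Pyartli}. Since $r(w)=w^2-1=-(1+w)(1-w)$, one has $r(w)^4=(1+w)^4(1-w)^4$, and together with $\varphi_1(w)=(1+w)^{\lambda_1}(1-w)^{\lambda_2}$ this gives
\[
\zeta(w):=\frac{\varphi_1(w)^3}{r(w)^4}=(1+w)^{3\lambda_1-4}(1-w)^{3\lambda_2-4},
\]
so that $u_1=3\lambda_1-4$ and $u_2=3\lambda_2-4$. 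Note that $\zeta(0)=1$ and that the hypothesis $\lambda_1,\lambda_2\notin\frac{1}{3}\Z$ is exactly what is needed to guarantee $u_1,u_2\notin\Z$.

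Next I would verify the vanishing hypothesis of Lemma \ref{lemma:Pyartli} for the polynomial $P_4$. This is precisely what Proposition \ref{prop:key4} provides: under the assumption that the germ $h\in\Diff$ conjugates the distinguished parabolic germs up to $4$--jets, we have
\[
\int_{\gamma_1}P_4(w)\,\zeta(w)\,dw=\int_{\gamma_1}\frac{P_4(w)}{r(w)^4}\,\varphi_1(w)^3\,dw=0.
\]
Lemma \ref{lemma:Pyartli} therefore yields a polynomial $R_4(w)$ and a constant $C\in\C$ such that
\[
\int_0^w P_4(t)\,\zeta(t)\,dt=R_4(w)\,r(w)\,\zeta(w)+C.
\]
Substituting back the explicit form of $\zeta$ and simplifying $r(w)\cdot r(w)^{-4}=r(w)^{-3}$, this becomes
\[
\int_0^w\frac{P_4(t)}{r(t)^4}\,\varphi_1(t)^3\,dt=\frac{R_4(w)}{r(w)^3}\,\varphi_1(w)^3+C.
\]

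Finally, the value of the constant $C$ is pinned down by evaluating both sides at $w=0$. Since $r(0)=-1$ and $\varphi_1(0)=1$, the left--hand side vanishes while the right--hand side equals $-R_4(0)+C$, forcing $C=R_4(0)$, which is exactly the normalization stated in the proposition. I do not expect any genuine obstacle here, as the entire argument is a direct specialization of Lemma \ref{lemma:Pyartli}; the only subtlety worth flagging (analogous to the remark made just after Lemma \ref{lemma:Pyartli}) is that since $P_4$ has degree $2(d-1)=6$ and the genericity assumption (ii) together with $\operatorname{Re}\lambda_1+\operatorname{Re}\lambda_2\geq 2/3$ give $m-1=5>-2-\operatorname{Re}(u_1+u_2)$, the polynomial $R_4(w)$ produced by the lemma has degree at most $\deg P_4-1=5$, which will be needed in Subsection \ref{subsec:mainlemmarevisited}.
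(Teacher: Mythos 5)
Your proof is correct and is exactly the paper's argument: the paper's proof of this proposition is the one-line invocation of Lemma \ref{lemma:Pyartli} with $P=P_4$ and $u_j=3\lambda_j-4$, and you have simply filled in the routine verifications (identification of $\zeta$, non-integrality of the exponents via $\lambda_j\notin\frac{1}{3}\Z$, vanishing of the integral from Proposition \ref{prop:key4}, and the normalization $C=R_4(0)$ using $r(0)=-1$). Your closing remark on $\deg R_4\leq 5$ likewise matches what the paper establishes later in Proposition \ref{prop:degRd}.
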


\begin{proof}
Apply Lemma \ref{lemma:Pyartli} with $P(w)=P_4(w)$ and $u_j=3\lambda_j-4$. 
\end{proof}

\subsection{Key lemma for degree five}

We proceed in exactly the same way as we did in the previous subsection.

\begin{proposition}\label{prop:key5}
Let $P_5(w)=\tilde{q}_5(w)-q_5(w)-2S_2(w)R_4(w)$ with the polynomials $q_5(w)$ as in Proposition \textnormal{\ref{prop:fifthvar}} and $R_4(w)$ as in Proposition \textnormal{\ref{prop:R4}}. If a germ $h\in\Diff$ conjugates corresponding pairs of distinguished parabolic germs up to $5$--jets then
\[ \int_{\gamma_1}\frac{P_5(w)}{r(w)^5}\,\varphi_1(w)^4\,dw=0. \]
Moreover, the coefficient of degree four in the power series expansion of $h$ is given by
\begin{equation}\label{eq:h4}
 h_4=\frac{\tilde{c}_4-c_4}{3}-\frac{\tilde{c}_3\tilde{c}_2-c_3c_2}{6}-R_4(0)-\tilde{c}_2R_3(0)+3h_3h_2-2h_2^3+\frac{c_3}{2}h_2. 
\end{equation}
\end{proposition}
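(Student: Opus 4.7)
\medskip
\noindent\textbf{Proof proposal.}
The plan is to mimic the template of Proposition \ref{prop:key4} one level up, exploiting the polynomials $R_3$ and $R_4$ produced by Propositions \ref{prop:R3} and \ref{prop:R4} for precisely this purpose. First I would expand the functional equation $h\circ f_j-\tilde{f}_j\circ h=0$ and extract the coefficient of $z^5$. Since $h$ is parabolic with $h_2,h_3$ already determined by Propositions \ref{prop:h2} and \ref{prop:key4}, a direct expansion yields an identity of the form
\begin{equation*}
\tilde{a}_{5j}-a_{5j}=2h_4\,a_{2j}+Q_j\bigl(h_2,h_3,a_{kj},\tilde{a}_{kj}\colon k\le4\bigr),\qquad j=1,2,
\end{equation*}
where $Q_j$ is explicit. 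In parallel, Proposition \ref{prop:fifthvar} applied to both $\F(\lambda,\alpha)$ and $\F(\lambda,\beta)$ expresses $\tilde{a}_{5j}-a_{5j}$ as a linear combination of $\tilde{c}_k-c_k$ together with the integral differences $\widetilde{\psi}_{kj}-\psi_{kj}$, $\widetilde{\Delta}_{kj}-\Delta_{kj}$, $\widetilde{\Gamma}_{1j}-\Gamma_{1j}$ and $\widetilde{\psi}_{5j}-\psi_{5j}$. The master identity is produced by equating these two expressions.

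The next step is to simplify each integral difference using the tools already at hand. Because $\widetilde{\psi}_2=\psi_2$ by Proposition \ref{prop:S2=r}, every $\Delta$ and $\Gamma$ difference collapses into an integral whose numerator involves only $\widetilde{S}_3-S_3=P_3$ and $\tilde{q}_4-q_4=P_4+S_2R_3$. For the $P_3$-type pieces I would use the identity $\tfrac{P_3}{r^3}\varphi_1^2=\bigl(\tfrac{R_3}{r^2}\varphi_1^2\bigr)'$ from Proposition \ref{prop:R3} and integrate by parts against $\psi_2$ and $\psi_2^2$, exactly as in formula \eqref{formula:Delta1t-Delta1}. This generates boundary contributions of the form $R_3(0)\,a_{2j}^k$ together with residual integrals carrying the weight $\tfrac{R_3S_2}{r^4}\varphi_1^3$. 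For the $P_4$-piece arising in $\widetilde{\Gamma}_{1j}-\Gamma_{1j}$, the analogous step uses $\tfrac{P_4}{r^4}\varphi_1^3=\bigl(\tfrac{R_4}{r^3}\varphi_1^3\bigr)'$ from Proposition \ref{prop:R4}; integration by parts against $\psi_2$ produces a boundary term $-R_4(0)\,a_{2j}$ and a residual $-\int_{\gamma_j}\tfrac{R_4S_2}{r^5}\varphi_1^4\,dw$. The factor $2$ in front of $\Gamma_{1j}$ in Proposition \ref{prop:fifthvar} doubles this residual, which is the source of the term $-2S_2R_4$ in the definition of $P_5$. The lower-order integrals $\widetilde{\psi}_{3j}-\psi_{3j}=0$ and the mixed weights $\tfrac{R_3S_2}{r^4}\varphi_1^3$ and $\tfrac{S_2R_3}{r^4}\psi_2\varphi_1^3$ produced above must then be checked to cancel against the contributions coming from $-2c_2\psi_{4j}$, $+\Delta_{2j}$ and $+2\Gamma_{1j}$ in Proposition \ref{prop:fifthvar}, once one substitutes the known expressions $h_2=\tilde{c}_2-c_2$ and $h_3=h_2^2+\tfrac{\tilde{c}_3-c_3}{2}+R_3(0)$.

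After this reorganization the master identity becomes
\begin{equation*}
a_{2j}\,\mathcal{C}_5+\mathcal{I}_{5j}=0,\qquad j=1,2,
\end{equation*}
with $\mathcal{I}_{5j}=\int_{\gamma_j}\tfrac{P_5}{r^5}\varphi_1^4\,dw$, $P_5=\tilde{q}_5-q_5-2S_2R_4$, and $\mathcal{C}_5$ a polynomial expression in $h_2,h_3,h_4,c_k,\tilde{c}_k,R_3(0),R_4(0)$ that is affine in the single unknown $h_4$. Invoking Proposition \ref{prop:key} with $d=5$ (whose hypothesis $\lambda_1\notin\tfrac{1}{3}\Z$ is guaranteed by genericity assumption (ii)) forces simultaneously $\mathcal{I}_{51}=0$, which is the Key lemma at degree five, and $\mathcal{C}_5=0$, which is a linear equation for $h_4$ whose solution is precisely the formula \eqref{eq:h4}. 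The main obstacle is not conceptual but combinatorial: at degree five the number of integral differences roughly doubles compared to the previous step, and one must keep meticulous track of the numerical coefficients produced by the successive integrations by parts, in particular verifying the coefficient $-2$ in front of $S_2R_4$ and confirming that every mixed integral carrying $\psi_2$ or $\psi_3$ ultimately cancels, so that only the clean weight $r^{-5}\varphi_1^4$ survives in $\mathcal{I}_{5j}$.
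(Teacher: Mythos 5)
Your proposal follows essentially the same route as the paper's proof: two expressions for $\tilde{a}_{5j}-a_{5j}$ (one from the degree-five coefficient of $h\circ f_j-\tilde f_j\circ h$, one from Proposition \ref{prop:fifthvar}), integration by parts via $R_3$ and $R_4$ to collapse the $\Delta$- and $\Gamma$-differences, correct identification of the doubled residual $-2\int_{\gamma_j}\tfrac{S_2R_4}{r^5}\varphi_1^4\,dw$ as the origin of the $-2S_2R_4$ term in $P_5$, and the final appeal to Proposition \ref{prop:key} to get both $\mathcal{I}_{51}=0$ and the formula for $h_4$ from $\mathcal{C}_5=0$. The remaining work is exactly the bookkeeping you flag, and it goes through as you describe.
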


\begin{proof}
Taking into account that $\tilde{a}_{2j}=a_{2j}$ and $\tilde{a}_{3j}=a_{3j}=a_{2j}^2+\psi_{3j}$, a straightforward computation shows that the coefficient of degree 5 in the power series expansion of $h\circ f_j-\tilde{f}_j\circ h$ is given by
\begin{align}\label{eq:a5t-a5-v0}
&- \tilde{a}_{5j}+a_{5j}-4h_2(\tilde{a}_{4j}-a_{4j})-2h_2a_{4j}+2h_2a_{2j}^3+3(h_3-h_2^2)a_{2j}^2 \nonumber \\
&\phantom{-}+(2h_4-2h_3h_2+2h_2\psi_{3j})a_{2j}-3h_2^2\psi_{3j}. 
\end{align}
By Proposition \ref{prop:fourthvar}, 
\[ a_{4j}=2(a_{2j}^2+\psi_{3j})a_{2j}-a_{2j}^3+\frac{c_3}{2}a_{2j}-c_2\psi_{3j}+\Delta_{1j}+\psi_{4j}, \] 
and equation (\ref{eq:a4t-a4_v1}) implies 
\[ \tilde{a}_{4j}-a_{4j}=(h_3-h_2^2)a_{2j}-h_2\psi_{3j}. \] 
Using the above identities and equating (\ref{eq:a5t-a5-v0}) to zero we obtain
\begin{align}\label{eq:a5t-a5-v1}
\tilde{a}_{5j}-a_{5j} &= 3(h_3-h_2^2)a_{2j}^2+(2h_4-4(h_3-h_2^2)h_2-2h_3h_2-2h_2\psi_{3j}-c_3h_2)a_{2j} \nonumber \\
		      &\phantom{=} +(h_2^2+2c_2h_2)\psi_{3j}-2h_2\Delta_{1j}-2h_2\psi_{4j}.
\end{align}

On the other hand, we can use Proposition \ref{prop:fifthvar} to compute $\tilde{a}_{5j}-a_{5j}$. We use once more the facts $\tilde{a}_{2j}=a_{2j}$, $\tilde{a}_{3j}=a_{3j}$ and $\tilde{\psi}_{3j}=\psi_{3j}$, thus
\begin{align}
 \tilde{a}_{5j}-a_{5j} 
&=		2(\tilde{a}_{4j}-a_{4j})a_{2j}+\frac{\tilde{c}_3-c_3}{2}a_{2j}^2+\left(2\frac{\tilde{c}_4-c_4}{3}-\frac{\tilde{c}_3\tilde{c}_2-c_3c_2}{3}\right)a_{2j} \label{eq:a5t-a5-v2-pt1} \\
&\phantom{=}	+(\tilde{c}_2^2-c_2^2)\psi_{3j}-2\tilde{c}_2\tilde{\psi}_{4j}+2c_2\psi_{4j}-2\tilde{c}_2\widetilde{\Delta}_{1j}+2c_2\Delta_{1j} \label{eq:a5t-a5-v2-pt2} \\
&\phantom{=}	+\widetilde{\Delta}_{2j}-\Delta_{2j}+2\widetilde{\Gamma}_{1j}-2\Gamma_{1j}+\tilde{\psi}_{5j}-\psi_{5j}. \label{eq:a5t-a5-v2-pt3}
\end{align}
First, note that using the expression found for $\tilde{a}_{4j}-a_{4j}$ in (\ref{eq:a4t-a4_v1}) we can rewrite the right--hand side of (\ref{eq:a5t-a5-v2-pt1}) as
\begin{equation}\label{eq:a5t-a5-PQ}
 \left(2(h_3-h_2^2)+\frac{\tilde{c}_3-c_3}{2}\right)a_{2j}^2+\left(-2h_2\psi_{3j}+2\frac{\tilde{c}_4-c_4}{3}-\frac{\tilde{c}_3\tilde{c}_2-c_3c_2}{3}\right)a_{2j}.
\end{equation}
Now, note that $\widetilde{\Delta}_{2j}-\Delta_{2j}=\int_{\gamma_j}\frac{P_3}{r^3}\psi_2^2\varphi_1^2\,dw$, and so integration by parts yields
\begin{equation}\label{formula:Delta2t-Delta2}
 \widetilde{\Delta}_{2j}-\Delta_{2j}=R_3(0)a_{2j}^2-\int_{\gamma_j}\frac{2R_3S_2}{r^4}\psi_2\varphi_1^3\,dw. 
\end{equation}
Recall that $P_4=\tilde{q}_4-q_4-2S_2R_3$, therefore
\begin{align*}
\widetilde{\Delta}_{2j}-\Delta_{2j}+2\widetilde{\Gamma}_{1j}-2\Gamma_{1j}
&= R_3(0)a_{2j}^2-2\int_{\gamma_j}\frac{S_2R_3}{r^4}\psi_2\varphi_1^3\,dw+2\int_{\gamma_j}\frac{\tilde{q}_4-q_4}{r^4}\psi_2\varphi_1^3\,dw \\
&= R_3(0)a_{2j}^2+2\int_{\gamma_j}\frac{P_4}{r^4}\psi_2\varphi_1^3\,dw.
\end{align*}
Integrating by parts the last integral we obtain
\[ \int_{\gamma_j}\frac{P_4}{r^4}\psi_2\varphi_1^3\,dw=-R_4(0)a_{2j}-\int_{\gamma_j}\frac{R_4S_2}{r^5}\varphi_1^4\,dw. \]
We conclude that
\[ \widetilde{\Delta}_{2j}-\Delta_{2j}+2\widetilde{\Gamma}_{1j}-2\Gamma_{1j} = R_3(0)a_{2j}^2-2R_4(0)a_{2j}-\int_{\gamma_j}\frac{2S_2R_4}{r^5}\varphi_1^4\,dw. \]
Since we defined $P_5=\tilde{q}_5-q_5-2S_2R_4$ and $\psi_{5j}=\int_{\gamma_j}\frac{q_5}{r^5}\,\varphi_1^4\,dw$ we see that expression (\ref{eq:a5t-a5-v2-pt3}) is given by
\begin{equation}\label{eq:a5t-a5-D2G1P5}
 R_3(0)a_{2j}^2-2R_4(0)a_{2j}+\int_{\gamma_j}\frac{P_5}{r^5}\varphi_1^4\,dw.
\end{equation}
Let us now analyse expression (\ref{eq:a5t-a5-v2-pt2}). Note that $\tilde{c}_2^2-c_2^2=h_2^2+2c_2h_2$, since $h_2=\tilde{c_2}-c_2$, therefore the first term in (\ref{eq:a5t-a5-v2-pt2}) can be rewritten as $(h_2^2+2c_2h_2)\psi_{3j}$. Next,
\[ -2\tilde{c}_2\tilde{\psi}_{4j}+2c_2\psi_{4j}=-2h_2\psi_{4j}-2\tilde{c}_2(\tilde{\psi}_{4j}-\psi_{4j}), \]
and
\[ -2\tilde{c}_2\widetilde{\Delta}_{1j}+2c_2\Delta_{1j}=-2h_2\Delta_{1j}-2\tilde{c}_2(\widetilde{\Delta}_{1j}-\Delta_{1j}). \]
We've seen already that $\widetilde{\Delta}_{1j}-\Delta_{1j}=R_3(0)a_{2j}-\int_{\gamma_j}\frac{S_2R_3}{r^4}\varphi_1^3\,dw$, so taking into account that $\psi_{4j}=\int_{\gamma_j}\frac{q_4}{r^4}\varphi_1^3\,dw$ and $P_4=\tilde{q}_4-q_4-S_2R_3$ we get that expression (\ref{eq:a5t-a5-v2-pt2}) is given by
\begin{align}\label{eq:a5t-a5-otros}
&  (h_2^2+2c_2h_2)\psi_{3j}-2h_2\psi_{4j}-2h_2\Delta_{1j}-2\tilde{c}_2R_3(0)a_{2j}-2\tilde{c}_2\int_{\gamma_j}\frac{P_4}{r^4}\varphi_1^3\,dw, \nonumber \\
&= (h_2^2+2c_2h_2)\psi_{3j}-2h_2\psi_{4j}-2h_2\Delta_{1j}-2\tilde{c}_2R_3(0)a_{2j},
\end{align}
since, according to Proposition \ref{prop:key4}, $\int_{\gamma_j}\frac{P_4}{r^4}\varphi_1^3\,dw=0$. Adding up expressions (\ref{eq:a5t-a5-PQ}), (\ref{eq:a5t-a5-D2G1P5}) and (\ref{eq:a5t-a5-otros}), and taking into account that 
\[ h_3-h_2^2=\frac{\tilde{c}_3-c_3}{2}+R_3(0), \]
(which also follows from Proposition \ref{prop:key4}) we finally obtain
\begin{align}\label{eq:a5t-a5-v2}
 \tilde{a}_{5j}-a_{5j} &= 
		3(h_3-h_2^2)a_{2j}^2 + \left(-2h_2\psi_{3j}+2\frac{\tilde{c}_4-c_4}{3}-\frac{\tilde{c}_3\tilde{c}_2-c_3c_2}{3}-2R_4(0)-2\tilde{c}_2R_3(0)\right)a_{2j} \nonumber \\
&\phantom{=}	+(h_2^2+2c_2h_2)\psi_{3j}-2h_2\Delta_{1j}-2h_2\psi_{4j}+\int_{\gamma_j}\frac{P_5}{r^5}\varphi_1^4\,dw.
\end{align}
	  
We now equate the right hand sides of (\ref{eq:a5t-a5-v1}) and (\ref{eq:a5t-a5-v2}). Note that we can cancel those terms with $a_{2j}^2$ as well as those terms where $a_{2j}$ does not appear, with the exception of $\int_{\gamma_j}\frac{P_5}{r^5}\varphi_1^4\,dw$. We thus obtain an equation
\[ a_{2j}\,\mathcal{C}_5+\mathcal{I}_{5j}=0, \]
where 
\[ \mathcal{C}_5=2\frac{\tilde{c}_4-c_4}{3}-\frac{\tilde{c}_3\tilde{c}_2-c_3c_2}{3}-2R_4(0)-2\tilde{c}_2R_3(0)+6h_3h_2-4h_2^3+c_3h_2-2h_4, \]
and
\[ \mathcal{I}_{5j}=\int_{\gamma_j}\frac{P_5}{r^5}\,\varphi_1^{4}\,dw. \]
By Proposition \ref{prop:key},
\[ \mathcal{I}_{51}=\int_{\gamma_1}\frac{P_5}{r^5}\,\varphi_1^4\,dw=0, \qquad \mathcal{C}_5=0. \]
This proves the \hyperref[lemma:key]{Key lemma} for degree five. Moreover, it follows from $\mathcal{C}_5=0$ that
\[ h_4=\frac{\tilde{c}_4-c_4}{3}-\frac{\tilde{c}_3\tilde{c}_2-c_3c_2}{6}-R_4(0)-\tilde{c}_2R_3(0)+3h_3h_2-2h_2^3+\frac{c_3}{2}h_2. \]
Proposition \ref{prop:key5} is now proved.
\end{proof}

We now introduce the polynomial $R_5(w)$.

\begin{proposition}\label{prop:R5} 
If $\lambda_1,\lambda_2\notin\frac{1}{4}\Z$ there exists a polynomial $R_5(w)$ such that
\[ \int_0^w\frac{P_5(t)}{r(t)^5}\,\varphi_1(t)^4\,dt=\frac{R_5(w)}{r(w)^4}\,\varphi_1^4-R_5(0). \]
\end{proposition}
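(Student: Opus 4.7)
The plan is to invoke Lemma \ref{lemma:Pyartli} directly, following the template already used for Propositions \ref{prop:R3} and \ref{prop:R4}. I would set $P(w) := P_5(w)$ and take
\[ \zeta(w) = \frac{\varphi_1(w)^4}{r(w)^5} = (1+w)^{4\lambda_1 - 5}(1-w)^{4\lambda_2 - 5}, \]
so that the exponents entering the lemma are $u_j = 4\lambda_j - 5$. The requirement $u_1, u_2 \notin \Z$ translates exactly to the hypothesis $\lambda_1, \lambda_2 \notin \tfrac{1}{4}\Z$ that we are given, and $\zeta(0) = 1$ holds by construction.

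The remaining hypothesis of Lemma \ref{lemma:Pyartli} is the vanishing of $\int_{\gamma_1} P_5(w)\,\zeta(w)\,dw$, which is precisely the content of Proposition \ref{prop:key5} just proved. Applying the lemma then yields a polynomial $R(w)$ and a constant $C \in \C$ with
\[ \int_0^w \frac{P_5(t)}{r(t)^5}\,\varphi_1(t)^4\,dt = R(w)\,r(w)\,\zeta(w) + C = \frac{R(w)}{r(w)^4}\,\varphi_1(w)^4 + C. \]
Evaluating both sides at $w = 0$ and using $\varphi_1(0) = 1$ together with $r(0)^{-4} = 1$, the left-hand side vanishes and the right-hand side becomes $R(0) + C$, which forces $C = -R(0)$. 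Defining $R_5 := R$ produces the stated identity.

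Since the entire argument reduces to checking the two hypotheses of an already--established lemma, there is no substantive obstacle here; the proposition is essentially a harvest of the work done in Proposition \ref{prop:key5}, exactly parallel to how Proposition \ref{prop:R4} is harvested from Proposition \ref{prop:key4}. If one wished to be explicit about the degree of $R_5$, Lemma \ref{lemma:Pyartli} also gives the bound $\deg R_5 \leq \max(\deg P_5 - 1,\, -2 - \Re(u_1 + u_2))$, and under the standing inequality $\Re\lambda_1 + \Re\lambda_2 \geq 2/3$ from \eqref{eq:inequalityRelambda} this simplifies (for $\deg P_5$ large enough, which will be verified in Subsection \ref{subsec:mainlemmarevisited}) to $\deg R_5 \leq \deg P_5 - 1$ — the bound needed later when deducing the \hyperref[lemma:main]{Main Lemma} in degree five.
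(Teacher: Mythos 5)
Your proof is correct and is exactly the paper's argument: the paper's entire proof of this proposition is ``Apply Lemma \ref{lemma:Pyartli} with $P(w)=P_5(w)$ and $u_j=4\lambda_j-5$,'' and you have simply spelled out the verification of the hypotheses (the integral vanishing being Proposition \ref{prop:key5}) and the determination of the constant $C=-R_5(0)$. No discrepancies worth noting.
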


\begin{proof}
Apply Lemma \ref{lemma:Pyartli} with $P(w)=P_5(w)$ and $u_j=4\lambda_j-5$. 
\end{proof}

\subsection{Key lemma for degree six}

\begin{proposition}\label{prop:key6}
Let us define
\[ P_6=\tilde{q}_6-q_6+\tilde{q}_4R_3-\frac{1}{2}S_2R_3^2-S_3R_4-3S_2R_5, \]
with the polynomials $q_6$ as in Proposition \textnormal{\ref{prop:sixthvar}} and $R_5$ as in Proposition \textnormal{\ref{prop:R5}}. If a germ $h\in\Diff$ conjugates corresponding pairs of distinguished parabolic germs up to $6$--jets then
\[ \int_{\gamma_1}\frac{P_6(w)}{r(w)^6}\,\varphi_1(w)^5\,dw=0. \]
\end{proposition}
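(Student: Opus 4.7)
The plan is to mimic exactly the arguments used in Propositions~\ref{prop:key4} and \ref{prop:key5}: compute the degree-6 coefficient of the vanishing expression $h\circ f_j-\tilde{f}_j\circ h$ in two different ways, equate them, simplify using the already-established identities from lower degrees, and invoke Proposition~\ref{prop:key}. More concretely, the first calculation: expanding $h\circ f_j-\tilde{f}_j\circ h$ to order $6$ and isolating the $z^6$ coefficient gives an expression for $\tilde{a}_{6j}-a_{6j}$ in terms of $a_{2j},\ldots,a_{5j}$, $h_2,\ldots,h_5$ and the previously-computed differences $\tilde{a}_{4j}-a_{4j}$ and $\tilde{a}_{5j}-a_{5j}$, the latter already available from (\ref{eq:a4t-a4_v1}) and (\ref{eq:a5t-a5-v1}).

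The second calculation uses Proposition~\ref{prop:sixthvar} to produce an independent formula for $\tilde{a}_{6j}-a_{6j}$ as a linear combination of the coefficient differences $\tilde{c}_d-c_d$ and the integral differences
\[
\widetilde{\Delta}_{3j}-\Delta_{3j},\quad \widetilde{\Delta}_{(1,1)j}-\Delta_{(1,1)j},\quad \widetilde{\Gamma}_{2j}-\Gamma_{2j},\quad \widetilde{\Gamma}_{(0,1)j}-\Gamma_{(0,1)j},\quad \widetilde{\mathrm{B}}_{1j}-\mathrm{B}_{1j},\quad \tilde\psi_{6j}-\psi_{6j},
\]
together with the lower-order differences (which by Propositions~\ref{prop:key4} and \ref{prop:key5} are themselves expressible via $R_3(0), R_4(0)$ and integrals against $P_4,P_5$ that vanish on $\gamma_1$). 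The heart of the proof is to reduce each of these integral differences by integration by parts using the polynomials $R_3,R_4,R_5$ supplied by Propositions~\ref{prop:R3}, \ref{prop:R4}, \ref{prop:R5}. For example, $\widetilde{\Delta}_{3j}-\Delta_{3j}=\int_{\gamma_j}\frac{P_3}{r^3}\psi_2^3\varphi_1^2\,dw$ integrates by parts to $R_3(0)a_{2j}^3$ minus $3\int_{\gamma_j}\frac{S_2R_3}{r^4}\psi_2^2\varphi_1^3\,dw$; iterating the procedure on this remainder (the key of this remainder is of the form $\frac{\tilde q_4-q_4}{r^4}$ plus an $R_3$ contribution) produces the cross term $-\tfrac{1}{2}S_2R_3^2$ that appears in the definition of $P_6$. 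Similarly, $\widetilde{\Gamma}_{2j}-\Gamma_{2j}$ and $\widetilde{\Gamma}_{(0,1)j}-\Gamma_{(0,1)j}$ contribute the terms with $R_4$, $\widetilde{\mathrm{B}}_{1j}-\mathrm{B}_{1j}$ contributes $-3S_2R_5$, and $\widetilde{\Delta}_{(1,1)j}-\Delta_{(1,1)j}$ provides the $\tilde{q}_4R_3$ term after integrating against the mixed product $\psi_2\psi_3$.

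Collecting all resulting integrals with integrand carrying $\varphi_1^5$ under the common denominator $r^6$, and using Proposition~\ref{prop:S2=r} together with $\widetilde{\psi}_2=\psi_2$ to simplify, yields exactly the polynomial $P_6$ as defined in the statement; all remaining integrals (those carrying $\varphi_1^2,\varphi_1^3,\varphi_1^4$ or the products $\psi_2^k\psi_3^\ell$) must cancel against the lower-order contributions coming from the first computation, once we substitute the expressions for $h_2,h_3,h_4$ provided by Propositions~\ref{prop:h2}, \ref{prop:key4}, \ref{prop:key5}, and use $a_{3j}=a_{2j}^2+\psi_{3j}$, $a_{4j}=$ formula from Proposition~\ref{prop:fourthvar}, and the vanishing of $\int_{\gamma_j}P_4\varphi_1^3/r^4\,dw$ and $\int_{\gamma_j}P_5\varphi_1^4/r^5\,dw$ coming from Propositions~\ref{prop:key4} and \ref{prop:key5}.

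Equating the two expressions for $\tilde{a}_{6j}-a_{6j}$ and cancelling the matching terms will leave a system of the form $a_{2j}\,\mathcal{C}_6+\mathcal{I}_{6j}=0$ for $j=1,2$, where $\mathcal{I}_{6j}=\int_{\gamma_j}\frac{P_6}{r^6}\varphi_1^5\,dw$ and $\mathcal{C}_6$ depends only on $\lambda$, $\alpha$, $\beta$, $h_2,\ldots,h_5$ and $R_3(0),R_4(0),R_5(0)$. Applying Proposition~\ref{prop:key} with $d=6$, which requires $\lambda_1\notin\frac{1}{4}\mathbb{Z}$ (guaranteed by genericity assumption (ii)), yields $\mathcal{C}_6=\mathcal{I}_{61}=0$. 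The first equation determines $h_5$ for use in the next step, and the second is precisely the assertion of Proposition~\ref{prop:key6}. The main obstacle is bookkeeping: the number of distinct integral types appearing at degree $6$ is substantially larger than at degree $5$, and in particular the double integration by parts producing $R_3^2$ must be tracked carefully to verify that the coefficient $-\tfrac12$ in front of $S_2R_3^2$ is correct; checking that \emph{all} terms not proportional to $a_{2j}$ in the system actually cancel is the lengthiest but most routine part.
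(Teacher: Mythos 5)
Your proposal follows essentially the same route as the paper's proof: two independent computations of $\tilde{a}_{6j}-a_{6j}$ (one from the degree-$6$ coefficient of $h\circ f_j-\tilde f_j\circ h$, one from Proposition~\ref{prop:sixthvar} with all integral differences reduced by parts via $R_3,R_4,R_5$), cancellation of everything except $a_{2j}\,\mathcal{C}_6+\mathcal{I}_{6j}$, and an appeal to Proposition~\ref{prop:key}. The only inaccuracies are swapped attributions of where two terms of $P_6$ originate --- in the paper the $-\tfrac12 S_2R_3^2$ term arises from the double integration by parts inside $\widetilde{\Delta}_{(1,1)j}-\Delta_{(1,1)j}$ (the remainder of $\widetilde{\Delta}_{3j}-\Delta_{3j}$ instead cancels against part of $3\widetilde{\Gamma}_{2j}-3\Gamma_{2j}$), while the $\tilde q_4R_3$ term comes from $\widetilde{\Gamma}_{(0,1)j}-\Gamma_{(0,1)j}$ --- but these are harmless bookkeeping slips that would self-correct upon carrying out the computation.
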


\begin{proof}
Let us start by using Proposition \ref{prop:sixthvar} to obtain an expression for $\tilde{a}_{6j}-a_{6j}$. Using that  $\tilde{a}_{2j}=a_{2j}$ and $\tilde{a}_{3j}=a_{3j}$ we obtain the following formula for $\tilde{a}_{6j}-a_{6j}$,
\begin{align}
&  2(\tilde{a}_{5j}-a_{5j})a_{2j}+3(\tilde{a}_{4j}-a_{4j})a_{3j}-4(\tilde{a}_{4j}-a_{4j})a_{2j}^2 \label{a6t-a6_v2_1} \\
& +\frac{\tilde{c}_3-c_3}{2}a_{2j}^3+\left(\tilde{c}_4-c_4-\frac{\tilde{c}_3\tilde{c}_2-c_3c_2}{2}\right)a_{2j}^2 \label{a6t-a6_v2_2} \\
& +\left(\frac{3\tilde{c}_5-3c_5}{4}-\frac{\tilde{c}_4\tilde{c}_2-c_4c_2}{2}-\frac{\tilde{c}_3^2-c_3^2}{8}+\frac{\tilde{c}_3\tilde{c}_2^2-c_3c_2^2}{4}+\frac{\tilde{c}_3-c_3}{2}\psi_{3j}\right)a_{2j} \label{a6t-a6_v2_3} \\
& -\frac{\tilde{c}_2-c_2}{2}\psi_{3j}^2+\left(\frac{\tilde{c}_4-c_4}{3}+\frac{\tilde{c}_3\tilde{c}_2-c_3c_2}{3}-\tilde{c}_2^3+c_2^3\right)\psi_{3j} \label{a6t-a6_v2_4} \\
& +\left(-\frac{\tilde{c}_3}{2}+3\tilde{c}_2^2\right)\widetilde{\Delta}_{1j}-\left(-\frac{c_3}{2}+3c_2^2\right)\Delta_{1j} \label{a6t-a6_v2_5} \\
& -3\tilde{c}_2\widetilde{\Delta}_{2j}+3c_2\Delta_{2j} +\widetilde{\Delta}_{3j}-\Delta_{3j} +\widetilde{\Delta}_{(1,1)j}-\Delta_{(1,1)j} \label{a6t-a6_v2_6} \\
& +\left(-\frac{\tilde{c}_3}{2}+3\tilde{c}_2^2\right)\tilde{\psi}_{4j}-\left(-\frac{c_3}{2}+3c_2^2\right)\psi_{4j} -6\tilde{c}_2\widetilde{\Gamma}_{1j}+6c_2\Gamma_{1j} +3\widetilde{\Gamma}_{2j}-3\Gamma_{2j} \label{a6t-a6_v2_7} \\
& +\widetilde{\Gamma}_{(0,1)j}-\Gamma_{(0,1)j} -3\tilde{c}_2\tilde{\psi}_{5j}+3c_2\psi_{5j} +3\widetilde{\mathrm{B}}_{1j}-3\mathrm{B}_{1j} +\tilde{\psi}_{6j}-\psi_{6j}. \label{a6t-a6_v2_8}
\end{align}

We now shall rewrite several of the terms in the above expression for $\tilde{a}_{6j}-a_{6j}$. For (\ref{a6t-a6_v2_1}) we can use the expression for $\tilde{a}_{5j}-a_{5j}$ found in (\ref{eq:a5t-a5-v1}) and that for $\tilde{a}_{4j}-a_{4j}$ from (\ref{eq:a4t-a4_v1}), and write $a_{3j}=a_{2j}^2+\psi_{3j}$. We obtain the following expression after these substitutions:
\begin{align}
& 5(h_3-h_2^2)a_{2j}^3+\big(4h_4-12h_3h_2^2+8h_2^3-2c_3h_2-3h_2\psi_{3j}\big)a_{2j}^2 \nonumber \\
& +\big(3h_3\psi_{3j}-h_2^2\psi_{3j}+4c_2h_2\psi_{3j}-4h_2\Delta_{1j}-4h_2\psi_{4j}\big)a_{2j} -3h_2\psi_{3j}^2. \label{eq:Pt-P}
\end{align}

Next, equation (\ref{a6t-a6_v2_5}) can be rewritten as
\[ \left(-\frac{\tilde{c}_3-c_3}{2}+3(\tilde{c}_2^2-c_2^2)\right)\Delta_{1j}+\left(-\frac{\tilde{c}_3}{2}+3\tilde{c}_2^2\right)(\widetilde{\Delta}_{1j}-\Delta_{1j}). \]
We have an expression for $\widetilde{\Delta}_{1j}-\Delta_{1j}$ from equation (\ref{formula:Delta1t-Delta1}). Using this, (\ref{a6t-a6_v2_5}) becomes
\begin{equation}\label{eq:Delta1t-Delta1}
\left(-\frac{\tilde{c}_3-c_3}{2}+3(\tilde{c}_2^2-c_2^2)\right)\Delta_{1j} +\left(-\frac{\tilde{c}_3}{2}+3\tilde{c}_2^2\right)R_3(0)a_{2j} -\left(-\frac{\tilde{c}_3}{2}+3\tilde{c}_2^2\right)\int_{\gamma_j}\frac{S_2R_3}{r^4}\,\varphi_1^3\,dw.
\end{equation}

We also have an expression for $\widetilde{\Delta}_{2j}-\Delta_{2j}$ from (\ref{formula:Delta2t-Delta2}), so the first two terms in (\ref{a6t-a6_v2_6}) can be rewritten as
\begin{align}
 -3\tilde{c}_2\widetilde{\Delta}_{2j}+3c_2\Delta_{2j} &= -3(\tilde{c}_2-c_2)\Delta_{2j}-3\tilde{c}_2(\widetilde{\Delta}_{2j}-\Delta_{2j}) \nonumber \\
 &= -3(\tilde{c}_2-c_2)\Delta_{2j}-3\tilde{c}_2R_3(0)a_{2j}^2+6\tilde{c}_2\int_{\gamma_j}\frac{S_2R_3}{r^4}\,\psi_{2j}\varphi_1^3\,dw. \label{eq:Delta2t-Delta2}
\end{align}

In the same way as we deduced the formulas for $\widetilde{\Delta}_{1j}-\Delta_{1j}$ and $\widetilde{\Delta}_{2j}-\Delta_{2j}$, we integrate $\widetilde{\Delta}_{3j}-\Delta_{3j}=\int_{\gamma_j}\frac{P_3}{r^3}\,\psi_2^3\varphi_1^2\,dw$ by parts to obtain
\begin{equation}\label{eq:Delta3t-Delta3}
 \widetilde{\Delta}_{3j}-\Delta_{3j}=R_3(0)a_{2j}^3-3\int_{\gamma_j}\frac{S_2R_3}{r^4}\,\psi_2^2\varphi_1^3\,dw.
\end{equation}

We now wish to express $\widetilde{\Delta}_{(1,1)j}-\Delta_{(1,1)j}$ in terms of simpler objects. We proceed as follows. By definition,
\[ \widetilde{\Delta}_{(1,1)j}-\Delta_{(1,1)j} = \int_{\gamma_j}\frac{\widetilde{S}_3}{r^3}\,\tilde{\psi}_2\tilde{\psi}_3\varphi_1^2\,dw-\int_{\gamma_j}\frac{S_3}{r^3}\,\psi_2\psi_3\varphi_1^2\,dw,\]
which, taking into account that $\tilde{\psi}_2=\psi_2$, may be rewritten as
\begin{equation}\label{eq:Delta1,1t-Delta1,1_1}
\int_{\gamma_j}\frac{\widetilde{S}_3-S_3}{r^3}\,\psi_2\psi_3\varphi_1^2\,dw +\int_{\gamma_j}\frac{\widetilde{S}_3}{r^3}\,\psi_2(\tilde{\psi}_3-\psi_3)\varphi_1^2\,dw.
\end{equation}
The first integral in the above equation is given by $\int_{\gamma_j}\frac{P_3}{r^3}\,\psi_2\psi_3\varphi_1^2\,dw$, and so integration by parts yields
\begin{equation}\label{eq:Delta1,1t-Delta1,1_2}
R_3(0)a_{2j}\psi_{3j}-\int_{\gamma_j}\frac{S_2R_3}{r^4}\psi_3\varphi_1^3\,dw -\int_{\gamma_j}\frac{S_3R_3}{r^5}\,\psi_2\varphi_1^4\,dw.
\end{equation}
On the other hand, note that 
\begin{equation}\label{formula:psi3t-psi3}
 \tilde{\psi}_3(w)-\psi_3(w)=\int_0^w\frac{P_3}{r^3}\,\varphi_1^2\,dt=\frac{R_3(w)}{r(w)^2}\varphi_1(w)^2-R_3(0).
\end{equation}
Thus, the second integral in (\ref{eq:Delta1,1t-Delta1,1_1}) can be rewritten as
\[ \int_{\gamma_j}\frac{\widetilde{S}_3R_3}{r^5}\,\psi_2\varphi_1^4\,dw-R_3(0)\widetilde{\Delta}_{1j}, \]
since, by definition, $\widetilde{\Delta}_{1j}=\int_{\gamma_j}\frac{\widetilde{S}_3}{r^3}\,\tilde{\psi}_2\varphi_1^2\,dw$. In fact, taking into account (\ref{eq:Delta1,1t-Delta1,1}) and writing $\widetilde{\Delta}_{1j}=\Delta_{1j}+R_3(0)a_{2j}-\int_{\gamma_j}\frac{S_2R_3}{r^4}\,\varphi_1^3\,dw$ we obtain that the second integral in (\ref{eq:Delta1,1t-Delta1,1_1}) is given by
\begin{equation}\label{eq:Delta1,1t-Delta1,1_3}
\int_{\gamma_j}\frac{\widetilde{S}_3R_3}{r^5}\,\psi_2\varphi_1^4\,dw -R_3(0)\Delta_{1j}-R_3(0)^2a_{2j}+R_3(0)\int_{\gamma_j}\frac{S_2R_3}{r^4}\,\varphi_1^3\,dw.
\end{equation}
We claim that the following equality holds:
\begin{equation}\label{formula:psi4t-psi4}
 \int_{\gamma_j}\frac{S_2R_3}{r^4}\,\varphi_1^3\,dw=\tilde{\psi}_{4j}-\psi_{4j}. 
\end{equation}
Indeed, by definition, $\psi_{4j}=\int_{\gamma_j}\frac{q_4}{r^4}\,\varphi_1^3\,dw$ and so
\[ \tilde{\psi}_{4j}-\psi_{4j}=\int_{\gamma_j}\frac{\tilde{q}_4-q_4}{r^4}\,\varphi_1^3\,dw= \int_{\gamma_j}\frac{P_4}{r^4}\,\varphi_1^3\,dw+\int_{\gamma_j}\frac{S_2R_3}{r^4}\,\varphi_1^3\,dw, \]
since we have defined $P_4$ to be exacly $P_4=\tilde{q}_4-q_4-S_2R_3$. But acording to Proposition \ref{prop:key4} $\int_{\gamma_j}\frac{P_4}{r^4}\,\varphi_1^3\,dw=0$. This proves our claim and so we deduce that expression (\ref{eq:Delta1,1t-Delta1,1_3}), which is the second integral in (\ref{eq:Delta1,1t-Delta1,1_1}), equals
\begin{equation}\label{eq:Delta1,1t-Delta1,1_3.5}
\int_{\gamma_j}\frac{\widetilde{S}_3R_3}{r^5}\,\psi_2\varphi_1^4\,dw -R_3(0)\Delta_{1j}-R_3(0)^2a_{2j}+R_3(0)(\tilde{\psi}_{4j}-\psi_{4j}).
\end{equation}
Combining the last integral from (\ref{eq:Delta1,1t-Delta1,1_2}) and the first one from (\ref{eq:Delta1,1t-Delta1,1_3.5}) into a single integral we get
\begin{align}
\int_{\gamma_j}\frac{(\widetilde{S}_3-S_3)R_3}{r^5}\,\psi_2\varphi_1^4\,dw &= \int_{\gamma_j}\frac{P_3R_3}{r^5}\,\psi_2\varphi_1^4\,dw \nonumber \\
&= \frac{1}{2}\int_{\gamma_j}\left(\left(\frac{R_3}{r^2}\,\varphi_1^2\right)^2\right)^{\prime}\psi_2\,dw \nonumber \\
&= \frac{1}{2}R_3(0)^2a_{2j}-\int_{\gamma_j}\frac{\frac{1}{2}S_2R_3^2}{r^6}\,\varphi_1^5\,dw. \label{eq:Delta1,1t-Delta1,1_4}
\end{align}
Combining (\ref{eq:Delta1,1t-Delta1,1_2}) and (\ref{eq:Delta1,1t-Delta1,1_3.5}), and taking into account (\ref{eq:Delta1,1t-Delta1,1_4}) we obtain the following final expression:
\begin{align}
\widetilde{\Delta}_{(1,1)j}-\Delta_{(1,1)j} &= \left(-R_3(0)\psi_{3j}-\frac{1}{2}R_3(0)^2\right)a_{2j}-R_3(0)\Delta_{1j} +R_3(0)\tilde{\psi}_{4j}-R_3(0)\psi_{4j} \nonumber \\
&\phantom{=} -\int_{\gamma_j}\frac{S_2R_3}{r^4}\psi_3\varphi_1^3\,dw -\int_{\gamma_j}\frac{\frac{1}{2}S_2R_3^2}{r^6}\,\varphi_1^5\,dw. \label{eq:Delta1,1t-Delta1,1}
\end{align}

Next, we rewrite the first two terms of (\ref{a6t-a6_v2_7}) as
\[ \left(-\frac{\tilde{c}_3-c_3}{2}+3(\tilde{c}_2^2-c_2^2)\right)\psi_{4j} +\left(-\frac{\tilde{c}_3}{2}+3\tilde{c}_2^2\right)(\tilde{\psi}_{4j}-\psi_{4j}), \]
and use equation (\ref{formula:psi4t-psi4}) to obtain
\begin{equation}\label{eq:psi4t-psi4}
 \left(-\frac{\tilde{c}_3-c_3}{2}+3(\tilde{c}_2^2-c_2^2)\right)\psi_{4j}+\left(-\frac{\tilde{c}_3}{2}+3\tilde{c}_2^2\right)\int_{\gamma_j}\frac{S_2R_3}{r^4}\,\varphi_1^3\,dw.
\end{equation}

Similarly,
\[ -6\tilde{c}_2\widetilde{\Gamma}_{1j}+6c_2\Gamma_{1j}=-6(\tilde{c}_2-c_2)\Gamma_{1j}-6\tilde{c}_2(\widetilde{\Gamma}_{1j}-\Gamma_{1j}). \]
This time we claim
\begin{equation}\label{formula:Gamma1t-Gamma1}
 \widetilde{\Gamma}_{1j}-\Gamma_{1j}=-R_4(0)a_{2j}-\int_{\gamma_j}\frac{S_2R_4}{r^5}\,\varphi_1^4\,dw+\int_{\gamma_j}\frac{S_2R_3}{r^4}\,\psi_2\varphi_1^3\,dw.
\end{equation}
Indeed, since $P_4=\tilde{q}_4-q_4-S_2R_3$ and $\Gamma_{1j}=\int_{\gamma_j}\frac{q_4}{r^4}\,\psi_2\varphi_1^3\,dw$, we have
\[ \widetilde{\Gamma}_{1j}-\Gamma_{1j}=\int_{\gamma_j}\frac{P_4}{r^4}\,\psi_2\varphi_1^3\,dw+\int_{\gamma_j}\frac{S_2R_3}{r^4}\,\psi_2\varphi_1^3\,dw. \]
The claimed formula is simply obtained by integrating by parts the first integral on the right--hand side of the above equation. We conclude that
\begin{equation}\label{eq:Gamma1t-Gamma1}
 -6\tilde{c}_2\widetilde{\Gamma}_{1j}+6c_2\Gamma_{1j}=-6(\tilde{c}_2-c_2)\Gamma_{1j}+6\tilde{c}_2R_4(0)a_{2j}+6\tilde{c}_2\int_{\gamma_j}\frac{S_2R_4}{r^5}\,\varphi_1^4\,dw-6\tilde{c}_2\int_{\gamma_j}\frac{S_2R_3}{r^4}\,\psi_2\varphi_1^3\,dw.
\end{equation}

The analysis for $3\widetilde{\Gamma}_{2j}-3\Gamma_{2j}$ is analogous:
\[ 3(\widetilde{\Gamma}_{2j}-\Gamma_{2j})=3\int_{\gamma_j}\frac{P_4}{r^4}\,\psi_2^2\varphi_1^3\,dw+3\int_{\gamma_j}\frac{S_2R_3}{r^4}\,\psi_2^2\varphi_1^3\,dw,\]
which, after integration by parts of the first integral, becomes
\begin{equation}\label{eq:Gamma2t-Gamma2}
 3\widetilde{\Gamma}_{2j}-3\Gamma_{2j}=-3R_4(0)a_{2j}^2 -6\int_{\gamma_j}\frac{S_2R_4}{r^5}\,\psi_2\varphi_1^4\,dw +3\int_{\gamma_j}\frac{S_2R_3}{r^4}\,\psi_2^2\varphi_1^3\,dw.
\end{equation}

We now focus on $\widetilde{\Gamma}_{(0,1)j}-\Gamma_{(0,1)j}$. Let us rewite this expression:
\begin{align*} 
 \widetilde{\Gamma}_{(0,1)j}-\Gamma_{(0,1)j} &= \int_{\gamma_j}\frac{\tilde{q}_4}{r^4}\,\tilde{\psi}_3\varphi_1^3\,dw-\int_{\gamma_j}\frac{q_4}{r^4}\,\psi_3\varphi_1^3\,dw\\
&= \int_{\gamma_j}\frac{\tilde{q}_4-q_4}{r^4}\,\psi_3\varphi_1^3\,dw+\int_{\gamma_j}\frac{\tilde{q}_4}{r^4}\,(\tilde{\psi}_3-\psi_3)\varphi_1^3\,dw.
\end{align*}
The first integral in the last expression above is equal to
\[ \int_{\gamma_j}\frac{P_4}{r^4}\,\psi_3\varphi_1^3\,dw+\int_{\gamma_j}\frac{S_2R_3}{r^4}\,\psi_3\varphi_1^3\,dw,\]
and integrating by parts the first term gives us
\[ -R_4(0)\psi_{3j} -\int_{\gamma_j}\frac{S_3R_4}{r^6}\,\varphi_1^5\,dw +\int_{\gamma_j}\frac{S_2R_3}{r^4}\,\psi_3\varphi_1^3\,dw. \]
Taking into account formula (\ref{formula:psi3t-psi3}) we get
\[ \int_{\gamma_j}\frac{\tilde{q}_4}{r^4}\,(\tilde{\psi}_3-\psi_3)\varphi_1^3\,dw=\int_{\gamma_j}\frac{\tilde{q}_4R_3}{r^6}\,\varphi_1^5\,dw-R_3(0)\tilde{\psi}_{4j}. \]
We conclude that
\begin{align}
 \widetilde{\Gamma}_{(0,1)j}-\Gamma_{(0,1)j} &= -R_4(0)\psi_{3j} -\int_{\gamma_j}\frac{S_3R_4}{r^6}\,\varphi_1^5\,dw +\int_{\gamma_j}\frac{S_2R_3}{r^4}\,\psi_3\varphi_1^3\,dw \nonumber \\
&\phantom{=} +\int_{\gamma_j}\frac{\tilde{q}_4R_3}{r^6}\,\varphi_1^5\,dw-R_3(0)\tilde{\psi}_{4j}. \label{eq:Gamma0,1t-Gamma0,1}
\end{align}

The last terms in (\ref{a6t-a6_v2_8}) are as follows: first,
\begin{align}
 -3\tilde{c}_2\tilde{\psi}_{5j}+3c_2\psi_{5j} &= -3(\tilde{c}_2-c_2)\psi_{5j}-3\tilde{c}_2(\tilde{\psi}_{5j}-\psi_{5j}) \nonumber \\
&= -3(\tilde{c}_2-c_2)\psi_{5j}-3\tilde{c}_2\int_{\gamma_j}\frac{\tilde{q}_5-q_5}{r^5}\,\varphi_1^4\,dw \nonumber \\
&= -3(\tilde{c}_2-c_2)\psi_{5j}-6\tilde{c}_2\int_{\gamma_j}\frac{S_2R_4}{r^5}\,\varphi_1^4\,dw, \label{eq:psi5t-psi5}
\end{align}
since $P_5=\tilde{q}_5-q_5-2S_2R_4$ and $\int_{\gamma_j}\frac{P_5}{r^5}\,\varphi_1^4\,dw=0$. Second,
\begin{align}
 3\widetilde{\mathrm{B}}_{1j}-3\mathrm{B}_{1j} &= 3\int_{\gamma_j}\frac{P_5}{r^5}\,\psi_2\varphi_1^4\,dw+6\int_{\gamma_j}\frac{S_2R_4}{r^5}\,\psi_2\varphi_1^4\,dw \nonumber \\
&= 3R_5(0)a_{2j}-3\int_{\gamma_j}\frac{S_2R_5}{r^6}\,\varphi_1^5\,dw+6\int_{\gamma_j}\frac{S_2R_4}{r^5}\,\psi_2\varphi_1^4\,dw, \label{eq:Beta1t-Beta1}
\end{align}
by a simple integration by parts argument.

Under all these modifications we obtain a new expression for $\tilde{a}_{6j}-a_{6j}$. Moreover, a closer look at the newly found expressions shows that all integrals that appear in such expressions will cancel each other out \emph{except} those in which $\varphi_1$ appears raised to the sixth power. Indeed, the integral in (\ref{eq:Delta1t-Delta1}) is canceled out by the integral in (\ref{eq:psi4t-psi4}). Similarly the one in (\ref{eq:Delta2t-Delta2}) and the last integral in (\ref{eq:Gamma1t-Gamma1}), that in (\ref{eq:Delta3t-Delta3}) and the last integral in (\ref{eq:Gamma2t-Gamma2}), the first integral in (\ref{eq:Delta1,1t-Delta1,1}) and the second one in (\ref{eq:Gamma0,1t-Gamma0,1}), the first integral in (\ref{eq:Gamma1t-Gamma1}) and the one in (\ref{eq:psi5t-psi5}) and the first integral on (\ref{eq:Gamma2t-Gamma2}) and the last one in (\ref{eq:Beta1t-Beta1}) cancel each other out. We now group all remaining integrals into a single one. We obtain
\[ \int_{\gamma_j}\frac{\tilde{q}_4R_3-\frac{1}{2}S_2R_3^2-S_3R_4-3S_2R_5}{r^6}\,\varphi_1^5\,dw. \]
But recall that we have defined $P_6=\tilde{q}_6-q_6+\tilde{q}_4R_3-\frac{1}{2}S_2R_3^2-S_3R_4-3S_2R_5$ and $\psi_{6}=\int_{\gamma_j}\frac{q_6}{r^6}\,\varphi_1^5\,dw$. Since the expression $\tilde{\psi}_{6j}-\psi_{6j}$ appears at the end of (\ref{a6t-a6_v2_8}), we can group it with the above integral to obtain a term
\[ \int_{\gamma_j}\frac{P_6}{r^6}\,\varphi_1^5\,dw. \]
Note also that the term $R_3(0)\tilde{\psi}_{4j}$ appears in (\ref{eq:Delta1,1t-Delta1,1}) and (\ref{eq:Gamma0,1t-Gamma0,1}) with opposite signs, so we cancel out these as well. 

We finally obtain a new expression for $\tilde{a}_{6j}-a_{6j}$ from equations (\ref{eq:Pt-P}), (\ref{a6t-a6_v2_2}), (\ref{a6t-a6_v2_3}), (\ref{a6t-a6_v2_4}), (\ref{eq:Delta1t-Delta1}), (\ref{eq:Delta2t-Delta2}), (\ref{eq:Delta3t-Delta3}), (\ref{eq:Delta1,1t-Delta1,1}), (\ref{eq:psi4t-psi4}), (\ref{eq:Gamma1t-Gamma1}), (\ref{eq:Gamma2t-Gamma2}), (\ref{eq:Gamma0,1t-Gamma0,1}), (\ref{eq:psi5t-psi5}) and (\ref{eq:Beta1t-Beta1}) and taking into account the above considerations. 

\begin{formula}\label{formula1}
The difference $\tilde{a}_{6j}-a_{6j}$ is given by the following expression:
\begin{align}
& 5(h_3-h_2^2)a_{2j}^3+\big(4h_4-12h_3h_2^2+8h_2^3-2c_3h_2-3h_2\psi_{3j}\big)a_{2j}^2 \label{a6t-a6_v3_1} \\
& +\big(3h_3\psi_{3j}-h_2^2\psi_{3j}+4c_2h_2\psi_{3j}-4h_2\Delta_{1j}-4h_2\psi_{4j}\big)a_{2j} -3h_2\psi_{3j}^2 \label{a6t-a6_v3_2} \\
& +\frac{\tilde{c}_3-c_3}{2}a_{2j}^3+\left(\tilde{c}_4-c_4-\frac{\tilde{c}_3\tilde{c}_2-c_3c_2}{2}\right)a_{2j}^2 \label{a6t-a6_v3_3} \\
& +\left(\frac{3\tilde{c}_5-3c_5}{4}-\frac{\tilde{c}_4\tilde{c}_2-c_4c_2}{2}-\frac{\tilde{c}_3^2-c_3^2}{8}+\frac{\tilde{c}_3\tilde{c}_2^2-c_3c_2^2}{4}+\frac{\tilde{c}_3-c_3}{2}\psi_{3j}\right)a_{2j} \label{a6t-a6_v3_4} \\
& -\frac{\tilde{c}_2-c_2}{2}\psi_{3j}^2+\left(\frac{\tilde{c}_4-c_4}{3}+\frac{\tilde{c}_3\tilde{c}_2-c_3c_2}{3}-\tilde{c}_2^3+c_2^3\right)\psi_{3j} \label{a6t-a6_v3_5} \\
& \left(-\frac{\tilde{c}_3-c_3}{2}+3(\tilde{c}_2^2-c_2^2)\right)\Delta_{1j} +\left(-\frac{\tilde{c}_3}{2}+3\tilde{c}_2^2\right)R_3(0)a_{2j} \label{a6t-a6_v3_6} \\
& -3(\tilde{c}_2-c_2)\Delta_{2j}-3\tilde{c}_2R_3(0)a_{2j}^2+R_3(0)a_{2j}^3 \label{a6t-a6_v3_7} \\
& \left(-R_3(0)\psi_{3j}-\frac{1}{2}R_3(0)^2\right)a_{2j}-R_3(0)\Delta_{1j}-R_3(0)\psi_{4j} \label{a6t-a6_v3_8} \\
& \left(-\frac{\tilde{c}_3-c_3}{2}+3(\tilde{c}_2^2-c_2^2)\right)\psi_{4j} -6(\tilde{c}_2-c_2)\Gamma_{1j}+6\tilde{c}_2R_4(0)a_{2j} \label{a6t-a6_v3_9} \\
& -3R_4(0)a_{2j}^2 -R_4(0)\psi_{3j} -3(\tilde{c}_2-c_2)\psi_{5j} +3R_5(0)a_{2j} \label{a6t-a6_v3_10} \\
& +\int_{\gamma_j}\frac{P_6}{r^6}\,\varphi_1^5\,dw. \label{a6t-a6_v3_11}
\end{align}
\end{formula}

We now deduce a second expression for $\tilde{a}_{6j}-a_{6j}$. The coefficient of degree 6 in the power series expansion of $h\circ f_j-\tilde{f}_j\circ h$ is of the form $a_{6j}-\tilde{a}_{6j}+\ldots$. Let us take into account the formulas for $a_{3j}$, $a_{4j}$ and $a_{5j}$ found in Proposition \ref{prop:thirdvar}, Proposition \ref{prop:fourthvar} and Proposition \ref{prop:fifthvar} respectively. Let us also take into account that $\tilde{a}_{2j}=a_{2j}$, $\tilde{a}_{3j}=a_{3j}$ and let us substitute $\tilde{a}_{4j}$ and $\tilde{a}_{5j}$ by their formulas implied by equations (\ref{eq:a4t-a4_v1}) and (\ref{eq:a5t-a5-v1}), respectively. Under these considerations the explicit expression for the coefficient of degree six in $h\circ f_j-\tilde{f}_j\circ h$ may be easily obtained by a simple computed assisted computation. 

\begin{formula}\label{formula2}
 The difference $\tilde{a}_{6j}-a_{6j}$ is also given by the following expression:
\begin{align}
& -3h_2\psi_{5j}+(-h_3+4h_2^2+6c_2h_2)\psi_{4j}-\frac{7}{2}h_2\psi_{3j}^2 \label{a6t-a6_v1_1} \\
& +(h_4-2h_3h_2+c_2h_3-4c_2h_2^2-3c_2^2h_2)\psi_{3j} \label{a6t-a6_v1_2} \\
& -6h_2\Gamma_{1j}-3h_2\Delta_{2j}+(-h_3+4h_2^2+6c_2h_2)\Delta_{1j} \label{a6t-a6_v1_3} \\
& +\big(-4h_2\psi_{4j}+(4h_3-2h_2^2+4c_2h_2)\psi_{3j}-4h_2\Delta_{1j}\big)a_{2j} \label{a6t-a6_v1_4} \\
& +\left(3h_5-12h_4h_2-5h_3^2-\frac{1}{2}c_3h_3+28h_3h_2^2-14h_2^4+2c_3h_2^2-2c_4h_2+c_3c_2h_2\right)a_{2j} \label{a6t-a6_v1_5} \\
& +\left(-3h_2\psi_{3j}+7h_4-21h_3h_2+14h_2^3-\frac{7}{2}c_3h_2\right)a_{2j}^2 \label{a6t-a6_v1_6} \\
& +6\big(h_3-h_2^2\big)a_{2j}^3. \label{a6t-a6_v1_7} 
\end{align}
\end{formula}

We now proceed to compare the two formulas above. We shall see once again that everything that depends non--trivially on the index $j$ will be canceled out except for those terms which are a scalar multiple of $a_{2j}$, and the integral (\ref{a6t-a6_v3_11}).

Let us start with those terms having $a_{2j}^3$. For our first formula we have such terms on expressions (\ref{a6t-a6_v3_1}), (\ref{a6t-a6_v3_3}) and (\ref{a6t-a6_v3_7}), which add up to
\[ \left(5(h_3-h_2^2)+\frac{\tilde{c}_3-c_3}{2}+R_3(0)\right)a_{2j}^3. \]
It follows from (\ref{eq:h3}) that $h_3-h_2^2=\frac{\tilde{c}_3-c_3}{2}+R_3(0)$, and so the above expression equals $6(h_3-h_2^2)$ which is exactly (\ref{a6t-a6_v1_7}); the unique term in Formula \ref{formula2} having $a_{2j}^3$.

Consider now those terms with $a_{2j}^2$. Gathering those in Formula \ref{formula1} from (\ref{a6t-a6_v3_1}), (\ref{a6t-a6_v3_3}), (\ref{a6t-a6_v3_7}) and (\ref{a6t-a6_v3_10}) we get
\[ 4h_4-12h_3h_2^2+8h_2^3-3h_2\psi_{3j}-2c_3h_2 +\tilde{c}_4-c_4-\frac{\tilde{c}_3\tilde{c}_2-c_3c_2}{2}-3\tilde{c}_2R_3(0)-3R_4(0). \]
Using the formula for $h_4$ from (\ref{eq:h4}) we may transform the above expression into
\[ \left(7h_4-21h_3h_2+14h_2^3-\frac{7}{2}c_3h_2-3h_2\psi_{3j}\right)a_{2j}^2, \]
which is exactly (\ref{a6t-a6_v1_6}).

Let us consider now those terms that have simultaneously $a_{2j}$ \emph{and} something else that depends on the index $j$. Such terms in Formula \ref{formula1} appear in (\ref{a6t-a6_v3_2}), (\ref{a6t-a6_v3_4}) and (\ref{a6t-a6_v3_8}). They add up to the following expression:
\[ \left( 3h_3\psi_{3j}-h_2^2\psi_{3j}+4c_2h_2\psi_{3j}-4h_2\Delta_{1j}-4h_2\psi_{4j} +\frac{\tilde{c}_3-c_3}{2}\psi_{3j}-R_3(0)\psi_{3j} \right)a_{2j}. \]
Substituting $h_3-h_2^2$ instead of $\frac{\tilde{c}_3-c_3}{2}-R_3(0)$, the above turns into
\[ \left( 4h_3\psi_{3j}-2h_2^2\psi_{3j}+4c_2h_2\psi_{3j}-4h_2\Delta_{1j}-4h_2\psi_{4j} \right)a_{2j}, \]
which agrees with (\ref{a6t-a6_v1_4}).

Recall that $h_2=\tilde{c}_2-c_2$. Those terms having $\psi_{3j}^2$ are easily seen to cancel each other out; they are the last term in (\ref{a6t-a6_v3_2}) and the first one in (\ref{a6t-a6_v3_5}) for Formula \ref{formula1}, and the last term in (\ref{a6t-a6_v1_1}) for Formula \ref{formula2}.

Now, let us consider those terms with a single $\psi_{3j}$. In Formula \ref{formula1} they appear only in (\ref{a6t-a6_v3_5}) and (\ref{a6t-a6_v3_10}), and in Formula \ref{formula2} they are exactly those terms in (\ref{a6t-a6_v1_2}). Let us substitute the $h_4$ term in (\ref{a6t-a6_v1_2}) by the expression given in (\ref{eq:h4}). Under this substitution (\ref{a6t-a6_v1_2}) becomes
\begin{equation}\label{F1vsF2_psi3}
 \left(\frac{\tilde{c}_4-c_4}{3}-\frac{\tilde{c}_3\tilde{c}_2-c_3c_2}{6}-R_4(0)-\tilde{c}_2R_3(0)+h_3h_2-2h_2^3+\frac{c_3}{2}h_2+c_2h_3-4c_2h_2^2-3c_2^2h_2\right)\psi_{3j}. 
\end{equation}
According to Proposition \ref{prop:h2} and equation (\ref{eq:h3}) we have
\begin{align*}
 h_2&=\tilde{c}_2-c_2 & h_3&=\tilde{c}_2^2-2\tilde{c}_2c_2+c_2^2+\frac{\tilde{c}_3-c_3}{2}+R_3(0).
\end{align*}
Substituting the above expressions into (\ref{F1vsF2_psi3}) yields, after simplification,
\[ \left(\frac{\tilde{c}_4-c_4}{3}+\frac{\tilde{c}_3\tilde{c}_2-c_3c_2}{3}-R_4(0)-\tilde{c}_2^3+c_2^3 \right)\psi_{3j}, \]
which matches exactly those terms in Formula \ref{formula1} having $\psi_{3j}$.

The term $(-h_3+4h_2^2+6c_2h_2)\Delta_{1j}$ in (\ref{a6t-a6_v1_3}) may be rewritten, after replacing $h_3$ by its formula in (\ref{eq:h3}), as
\[ \left( 3h_2^2-\frac{\tilde{c}_3-c_3}{2}-R_3(0)+6c_2h_2\right)\Delta_{1j}, \]
which is easily seen to match those terms with $\Delta_{1j}$ in (\ref{a6t-a6_v3_6}) and (\ref{a6t-a6_v3_8}), once we replace $h_2$ by $\tilde{c}_2-c_2$.

Note that the terms having $\psi_{4j}$ in (\ref{a6t-a6_v1_1}) are $(-h_3+4h_2^2+6c_2h_2)\psi_{4j}$. The coefficient is the same than the coefficient for the $\Delta_{1j}$ term we just analysed, so the same argument shows that this term cancels out those terms in (\ref{a6t-a6_v3_8}) and (\ref{a6t-a6_v3_9}) having $\psi_{4j}$.

Taking into account that $h_2=\tilde{c}_2-c_2$ it is straight forward that those terms having $\Delta_{2j}$, $\Gamma_{1j}$ or $\psi_{5j}$ in Formula \ref{formula1} will cancel out the corresponding ones in Formula \ref{formula2}.

We conclude that equating Formula \ref{formula1} to Formula \ref{formula2} yields, after simplification, an equation of the form
\[ a_{2j}\,\mathcal{C}_6+\mathcal{I}_{6j}=0, \]
where
\begin{align*}
\mathcal{C}_6 &= \frac{3\tilde{c}_5-3c_5}{4}-\frac{\tilde{c}_4\tilde{c}_2-c_4c_2}{2}-\frac{\tilde{c}_3^2-c_3^2}{8}+\frac{\tilde{c}_3\tilde{c}_2^2-c_3c_2^2}{4} \\
&\phantom{=} +\left(-\frac{\tilde{c}_3}{2}+3\tilde{c}_2^2\right)R_3(0)-\frac{1}{2}R_3(0)^2+6\tilde{c}_2R_4(0)+3R_5(0)\\
&\phantom{=} -3h_5+12h_4h_2+5h_3^2+\frac{1}{2}c_3h_3-28h_3h_2^2+14h_2^4-2c_3h_2^2+2c_4h_2-c_3c_2h_2,
\end{align*}
and
\[ \mathcal{I}_{6j}=\int_{\gamma_j}\frac{P_6}{r^6}\,\varphi_1^5\,dw. \]
By Proposition \ref{prop:key},
\[ \mathcal{I}_{6j}=\int_{\gamma_j}\frac{P_6}{r^6}\,\varphi_1^5\,dw=0, \qquad \mathcal{C}_6=0. \]
This proves the Key lamma for degree six, and completes the proof of Lemma \ref{lemma:key}.
\end{proof}

\begin{proposition}\label{prop:R6}
If $\lambda_1,\lambda_2\notin\frac{1}{5}\Z$ there exists a polynomial $R_6(w)$ such that 
\[ \int_{0}^{w}\frac{P_6(t)}{r(t)^6}\,\varphi_1(t)^5\,dt=\frac{R_6(w)}{r(w)^5}\,\varphi_1(w)^5+R_6(0). \]
\end{proposition}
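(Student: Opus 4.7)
The plan is to invoke Lemma \ref{lemma:Pyartli} in exactly the same way as in the proofs of Propositions \ref{prop:R3}, \ref{prop:R4}, and \ref{prop:R5}, now specializing to $d=6$. The key step is to recognize the integrand as $P_6(w)\zeta(w)$ with $\zeta(w)=(1+w)^{u_1}(1-w)^{u_2}$, where $u_j=5\lambda_j-6$. Since $r(w)=-(1-w)(1+w)$ we have $r(w)^6=(1+w)^6(1-w)^6$, which together with $\varphi_1(w)^5=(1+w)^{5\lambda_1}(1-w)^{5\lambda_2}$ gives $\varphi_1(w)^5/r(w)^6=\zeta(w)$ with $\zeta(0)=1$. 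The genericity hypothesis $\lambda_1,\lambda_2\notin\tfrac{1}{5}\Z$ is precisely what guarantees $u_1,u_2\notin\Z$, which is the non-degeneracy assumption required by Lemma \ref{lemma:Pyartli}.

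Next, I would check that the two remaining hypotheses of Lemma \ref{lemma:Pyartli} hold: $P_6(w)$ is a polynomial (immediate from the definition given in Proposition \ref{prop:key6}, since each of $\tilde{q}_6$, $q_6$, $\tilde{q}_4$, $S_2$, $S_3$, $R_3$, $R_4$, $R_5$ is a polynomial), and the vanishing
\[ \int_{\gamma_1}\frac{P_6(w)}{r(w)^6}\,\varphi_1(w)^5\,dw=0 \]
holds, which is precisely the conclusion of Proposition \ref{prop:key6}. Lemma \ref{lemma:Pyartli} then yields a polynomial $R_6(w)$ and a constant $C\in\C$ with
\[ \int_0^w\frac{P_6(t)}{r(t)^6}\,\varphi_1(t)^5\,dt=R_6(w)\,r(w)\,\zeta(w)+C=\frac{R_6(w)}{r(w)^5}\,\varphi_1(w)^5+C. \]

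Finally, evaluating at $w=0$, using $r(0)=-1$ and $\varphi_1(0)=1$, gives $0=R_6(0)/(-1)^5+C=-R_6(0)+C$, hence $C=R_6(0)$, which is precisely the formula claimed. There is no genuine obstacle here; the only point deserving attention is the parity bookkeeping $r(0)^5=-1$, which accounts for the $+R_6(0)$ on the right-hand side (in contrast with the $-R_5(0)$ appearing in Proposition \ref{prop:R5}, where the exponent $r(0)^4=1$ was even).
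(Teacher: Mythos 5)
Your proposal is correct and is essentially the paper's own proof, which simply applies Lemma \ref{lemma:Pyartli} with $P(w)=P_6(w)$ and $u_j=5\lambda_j-6$; you have merely made explicit the identification of the integrand with $P_6\zeta$, the source of the vanishing of $\int_{\gamma_1}P_6\,r^{-6}\varphi_1^5\,dw$ (Proposition \ref{prop:key6}), and the evaluation at $w=0$ fixing $C=R_6(0)$. The sign bookkeeping via $r(0)^5=-1$ is also right and correctly explains the $+R_6(0)$ here versus the $-R_5(0)$ in Proposition \ref{prop:R5}.
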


\begin{proof}
 Apply Lemma \ref{lemma:Pyartli} with $P(w)=P_6(w)$ and $u_j=5\lambda_j-6$.
\end{proof}

\section{Proof of Elimination lemma}\label{sec:elimination}

We have completed the proof of the \hyperref[lemma:main]{Main lemma}, which claims the existence of polynomials $F_d$, $d=3,\ldots,6$, such that if $\F(\lambda,\alpha)$ and $\F(\lambda,\beta)$ have conjugate holonomy groups at infinity then 
\begin{equation}\label{eq:polysystbis}
 F_3(\beta)=0,\,\ldots,\,F_6(\beta)=0.
\end{equation}
The \hyperref[lemma:elimination]{Elimination lemma} claims that for generic $(\lambda,\alpha)\in\C^5$ the above polynomial system of equations has a unique solution given by $\beta=\alpha$. In order to prove such lemma we need to compute explicit expressions for the polynomials $F_d$ in terms of the parameters $\alpha$ and $\lambda$. We can explicitely construct such polynomials $F_d$ following the proof of the \hyperref[lemma:key]{Key lemma} (which is split into  Propositions \ref{prop:key3}, \ref{prop:key4}, \ref{prop:key5}, \ref{prop:key6}) and the ideas presented in Subsection \ref{subsec:keytomain} (Deducing Main lemma from Key lemma). All computations in this section have been carried out using computer assistance.

Recall that we have defined $F(z,w)$ to be the right hand side of the equation
\begin{equation}\label{eq:ratdifeqbis}
 \frac{dz}{dw}=\frac{z\,P(z,w)}{Q(z,w)},
\end{equation}
and that we have defined the rational functions $K_d(w)$ to be the coefficients
\[ F(z,w)=\sum_{d=1}^{\infty}K_d(w)\,z^d. \]
We replace $F(z,w)$ by its explicit expression (\ref{eq:normalform}) and expand it into a power series with respect to $z$ around $z=0$. After this, we split each coefficient $K_d(w)$ into
\[ K_d(w)=c_d\,K_1(w)+\frac{S_d(w)}{r(w)^d}, \]
according to Proposition \ref{prop:c_dS_d}. We obtain the following expressions for the numbers $c_d$,
\begin{align*}
 c_2&=\alpha_0(1-\sigma), & c_3&=-\alpha_0^{2}\sigma(1-\sigma), & c_4&=\alpha_0^{3}\sigma^{2}(1-\sigma),\\
 c_5&=-\alpha_0^{4}\sigma^{3}(1-\sigma), & c_6&=\alpha_0^{5}\sigma^{4}(1-\sigma),
\end{align*}
and for the polynomials $S_d(w)$,
\begin{align*}
S_2(w) &= r(w) \\
S_3(w) &= -s(w)p(w)r(w) + (\eta-\alpha_0\sigma)r(w)^2 \\
S_4(w) &= -p(w)r(w)^2+\alpha_0(2\sigma-1)s(w)p(w)r(w)^2+\alpha_0\sigma(\alpha_0\sigma-\eta)r(w)^3 \\
S_5(w) &= s(w)p(w)^2r(w)^2+(2\alpha_0\sigma-\eta)p(w)r(w)^3+\alpha_0^2\sigma(2-3\sigma)s(w)p(w)r(w)^3\\
  &\phantom{=} +\alpha_0^2\sigma^2(\eta-\alpha_0\sigma)r(w)^4 \\
S_6(w) &= p(w)^2r(w)^3+\alpha_0(1-3\sigma)s(w)p(w)^2r(w)^3+(2\alpha_0\sigma\eta-3\alpha_0^2\sigma^2)p(w)r(w)^4\\
  &\phantom{=} -\alpha_0^3\sigma^2(3-4\sigma)s(w)p(w)r(w)^4+\alpha_0^3\sigma^3(\alpha_0\sigma-\eta)r(w)^5.
\end{align*}

\begin{remark}
 These computations agree with those presented in \cite{Pyartli2006}. We remark that it is a consequence of the normal form (\ref{eq:normalform}) we have adopted, that all the above polynomials are divisible by $r(w)$ to some positive power and that $S_2(w)$ does not depend on the parameter $\alpha$ (cf.~Proposition \ref{prop:S2=r}).
\end{remark}

\subsection{Main lemma revisited}\label{subsec:mainlemmarevisited}

In Subsection \ref{subsec:keytomain} we have proved the \hyperref[lemma:main]{Main lemma} modulo the auxiliary facts that
\[ \deg{P_d(w)}=2(d-1), \qquad \mbox{and} \qquad \deg{R_d(w)}\leq\deg{P_d(w)}-1. \] 

It follows from a direct inspection of the expressions found for the polynomials $P_d(w)$ in Propositions \ref{prop:key3}, \ref{prop:key4}, \ref{prop:key5} and \ref{prop:key6} that for each $d=3,\ldots,6$, and the expressions for $S_d(w)$ above, that the polynomial $P_d(w)$ has degree $2(d-1)$. We now show that $\deg{R_d(w)}\leq\deg{P_d(w)}-1$ using Lemma \ref{lemma:Pyartli}.

\begin{proposition}\label{prop:degRd} 
 For $d=3,4,5,6$, the polynomials $R_d(w)$ have degree at most $\deg{P_d(w)}-1$.
\end{proposition}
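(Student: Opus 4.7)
The plan is to invoke Lemma~\ref{lemma:Pyartli} with the exact parameters used in Propositions~\ref{prop:R3}, \ref{prop:R4}, \ref{prop:R5} and \ref{prop:R6} to produce each $R_d$, and then check that the degree estimate reduces to a simple inequality on $\Re\lambda_1+\Re\lambda_2$.

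Concretely, for each $d\in\{3,4,5,6\}$ the polynomial $R_d$ was extracted from Lemma~\ref{lemma:Pyartli} applied to $P(w)=P_d(w)$, of degree $m=2(d-1)$, with exponents $u_j=(d-1)\lambda_j-d$. The lemma guarantees
\[
 \deg R_d(w)\leq \max\!\bigl(m-1,\;-2-\Re(u_1+u_2)\bigr).
\]
The proposition therefore amounts to showing that the first entry dominates, i.e.\ that $-2-\Re(u_1+u_2)\leq m-1$.

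I would carry out the estimate directly. Substituting $u_j=(d-1)\lambda_j-d$ gives
\[
 -2-\Re(u_1+u_2)=2d-2-(d-1)(\Re\lambda_1+\Re\lambda_2),
\]
and since $m-1=2d-3$ the desired inequality reduces to $(d-1)(\Re\lambda_1+\Re\lambda_2)\geq 1$. Now inequality~(\ref{eq:inequalityRelambda}), which follows from the ordering $\Re\lambda_1\geq\Re\lambda_2\geq\Re\lambda_3$ together with $\lambda_1+\lambda_2+\lambda_3=1$, yields $\Re\lambda_1+\Re\lambda_2\geq 2/3$. For $d\geq 3$ we then obtain $(d-1)(\Re\lambda_1+\Re\lambda_2)\geq 2\cdot 2/3=4/3>1$, which closes the argument uniformly in $d\in\{3,4,5,6\}$.

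There is no real obstacle: the statement is essentially a bookkeeping consequence of the degree clause in Lemma~\ref{lemma:Pyartli}, and the only input beyond that is the lower bound~(\ref{eq:inequalityRelambda}) coming from the normalization convention on the characteristic numbers. It is worth noting that this is precisely where the ordering of the $\lambda_i$ enters the argument in an essential way, and that the bound degrades if $d$ is allowed to be smaller than $3$; that is why the \hyperref[lemma:key]{Key lemma} is stated starting from degree~$d=3$.
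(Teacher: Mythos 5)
Your proof is correct and follows essentially the same route as the paper: both apply the degree clause of Lemma~\ref{lemma:Pyartli} with $u_j=(d-1)\lambda_j-d$ and use the bound $\Re{\lambda_1}+\Re{\lambda_2}\geq 2/3$ from (\ref{eq:inequalityRelambda}) to show that the $m-1$ term dominates the maximum for $d\geq 3$. The only difference is cosmetic --- you reduce directly to $(d-1)(\Re{\lambda_1}+\Re{\lambda_2})\geq 1$, while the paper bounds $-2-\Re{(u_1+u_2)}$ by $\tfrac{4d-4}{3}$ and compares that with $2(d-1)-1$.
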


\begin{proof}
We know that
\[ \int_{\gamma_1}\frac{P_d(w)}{r(w)^d}\,\varphi_1(w)^{d-1}\,dw = 0, \]
and we have defined the polynomials $R_d(w)$ by applying Lemma \ref{lemma:Pyartli} with $P(w)=P_d(w)$ and $u_j=(d-1)\lambda_j-d$. Lemma \ref{lemma:Pyartli} also implies that
\[ \deg{R_d}(w)\leq \max{\big(\deg{P_d}(w)-1,\,-2-\operatorname{Re}{(u_1+u_2)}\big)}. \]
Since $\Re\lambda_1+\Re\lambda_2\geq 2/3$, we conclude that 
\[ \operatorname{Re}(u_1+u_2)\geq\frac{2}{3}(d-1)-2d, \]
and thus 
\[ -2-\operatorname{Re}{(u_1+u_2)}\big)\leq\frac{4d-4}{3}. \]

On the other hand $\deg{P_d(w)}=2(d-1)$ and 
\[ 2(d-1)-1\geq\frac{4d-4}{3} \]
for any $d\geq3$.
\end{proof}

\subsection{Computing the polynomials \texorpdfstring{$F_d$}{Fd}}\label{subsec:computingFd}

Now that the \hyperref[lemma:main]{Main Lemma} has been fully proved we shall explain how to get explicit expressions for the polynomials $F_d(w)$. In the next subsection we use these explicit expressions to prove the \hyperref[lemma:elimination]{Elimination Lemma}.

Suppose $P_d(w)$ has degree $m$, and so $R_d(w)$ has degree at most $m-1$. Let $V_m$, $V_{m-1}$ denote the vector spaces of polynomials in $w$ of degree at most $m$ and $m-1$, respectively. We have seen in Subsection \ref{subsec:keytomain}, equation (\ref{eq:P_dR_d}), that 
\[ P_d=R'_dr+(d-1)(s-r')R_d. \]
Consider now the linear map
\[ L_d\colon V_{m-1}\longrightarrow V_m, \qquad f(w)\longmapsto f'(w)r(w)+(d-1)(s(w)-r'(w))f(w), \]
where $s(w)$ and $r(w)$ are the polynomials defined in Section \ref{sec:normalizations}. We prove below that the map $L_d$ has maximal rank and so its image $L_d(V_{m-1})$ is a hyperplane in $V_m$. Any hyperplane is given by the kernel of some (fixed) linear functional $T_d$. We have that $\int_{\gamma_1}\frac{P_d}{r^d}\varphi_1^{d-1}\,dt=0$ if and only if $P_d$ belongs to the image of $L_d$, if and only if $T_d(P_d)=0$. Since the coefficients of $P_d$ are polynomials on $\beta$ the expression $T_d(P_d)$ is also a polynomial on $\beta$. In this way we have that $F_d:=T_d(P_d)$ vanishes if $\int_{\gamma_1}\frac{P_d}{r^d}\varphi_1^{d-1}\,dt=0$.

\begin{proposition}\label{prop:rankLd}
The linear map
\[ L_d\colon V_{2d-3}\longrightarrow V_{2d-2}, \qquad f\longmapsto f'r+(d-1)(s-r')f\]
has, with respect to the standard bases $\{1,w,\ldots,w^{2d-3}\}$ and $\{1,w,\ldots,w^{2d-2}\}$, the following matrix representation

\[ M_d=\begin{pmatrix}
A_d & -1 & 0 & \cdots & 0 & 0 & 0\\
B_d-2d+2 & A_d & -2 & \cdots & 0 & 0 & 0\\
0 & B_d-2d+3 & A_d & \cdots & 0 & 0 & 0\\
\vdots & \vdots & \vdots & ~ & \vdots & \vdots & \vdots\\
0 & 0 & 0 & \cdots & B_d-3 & A_d & -2d+3\\
0 & 0 & 0 & \cdots & 0 & B_d-2 & A_d\\
0 & 0 & 0 & \cdots & 0 & 0 & B_d-1\\
   \end{pmatrix},\]
where
\[ A_d=(d-1)(-\lambda_1+\lambda_2), \qquad B_d=(d-1)(\lambda_1+\lambda_2). \]

In particular, if $\lambda_3\notin\frac{1}{3}\Z\cup\frac{1}{4}\Z\cup\frac{1}{5}\Z$ then the linear map $L_d$ has maximal rank for each $d=3,\ldots,6$.
\end{proposition}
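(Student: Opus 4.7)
The first task is the matrix representation, which is a short bookkeeping computation. I would apply $L_d$ to each basis monomial $w^j$ of $V_{2d-3}$, using $r(w) = w^2 - 1$ and the identity
\[ (d-1)(s(w) - r'(w)) = (B_d - 2d + 2)\,w + A_d. \]
The resulting expression
\[ L_d(w^j) = (j + B_d - 2d + 2)\,w^{j+1} + A_d\,w^j - j\,w^{j-1} \]
shows that column $j$ of $M_d$ has precisely the three non-zero entries $j + B_d - 2d + 2$, $A_d$, and $-j$ at rows $j+1$, $j$, and $j-1$ respectively, which matches the claimed matrix exactly.

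For the maximal rank claim, my plan is to delete the top row of $M_d$ and compute the determinant of the resulting square $(2d-2)\times(2d-2)$ submatrix. After this deletion, in each column $j$ the surviving non-zero entries live only in the relabelled rows $j$, $j-1$, $j-2$; in particular every entry strictly below the main diagonal vanishes, so the submatrix is upper triangular. Its diagonal entries are $j + B_d - 2d + 2$ for $j = 0, 1, \ldots, 2d-3$, so its determinant is simply $\prod_{j=0}^{2d-3}(j + B_d - 2d + 2)$.

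The final step is to check when this product is non-zero. Using $\lambda_1 + \lambda_2 + \lambda_3 = 1$ one has $B_d = (d-1)(1-\lambda_3)$, so the $j$-th factor becomes $j - (d-1) - (d-1)\lambda_3$, which vanishes precisely when $\lambda_3 = (j - (d-1))/(d-1)$ for some $j \in \{0, 1, \ldots, 2d-3\}$. The set of bad values for $\lambda_3$ is therefore a finite subset of $\tfrac{1}{d-1}\Z$. For $d = 4, 5, 6$ this is directly contained in $\tfrac{1}{3}\Z$, $\tfrac{1}{4}\Z$, $\tfrac{1}{5}\Z$ respectively; for $d = 3$ it is a subset of $\tfrac{1}{2}\Z \subset \tfrac{1}{4}\Z$. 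Thus the genericity hypothesis $\lambda_3 \notin \tfrac{1}{3}\Z \cup \tfrac{1}{4}\Z \cup \tfrac{1}{5}\Z$ ensures non-vanishing of the determinant for each of $d = 3, 4, 5, 6$, whence $L_d$ has maximal rank. The proof is essentially mechanical; the only real choice to make is which row of $M_d$ to delete, and row $0$ is the natural one because it alone converts the banded matrix into a triangular one with an explicit product as its determinant. One could alternatively argue via the first-order ODE $L_d f = 0$, whose formal solution $f = C(w+1)^{(d-1)(1-\lambda_1)}(w-1)^{(d-1)(1-\lambda_2)}$ is non-polynomial for generic $\lambda$, but this route is less clean and does not pin down the exceptional set as precisely.
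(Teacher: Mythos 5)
Your proof is correct and follows essentially the same route as the paper: the paper likewise computes the banded matrix $M_d$, drops the first row to obtain an upper--triangular square matrix with diagonal entries $B_d-k$, $k=1,\ldots,2d-2$, and notes these can vanish only if $\lambda_3\in\frac{1}{d-1}\Z$, which the genericity hypothesis excludes (using, as you do for $d=3$, that $\frac{1}{2}\Z\subset\frac{1}{4}\Z$). Your explicit formula for $L_d(w^j)$ and the identification of the exceptional set are both accurate.
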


\begin{proof}
Obtaining the expression for the above matrix is a straightforward computation. Note that if we drop the first row in the above matrix we obtain an upper--triangular $2(d-1)\times2(d-1)$ matrix whose diagonal entries are of the form $B_d-k=(d-1)(\lambda_1+\lambda_2)-k$ with $k=1,\ldots,2d-2$. Note moreover that such an expression may vanish only if
\[ \lambda_3=1-\lambda_1-\lambda_2\in\frac{1}{d-1}\Z. \]
This shows that under our genericity assumptions the matrix $M_d$, $d=3,\dots,6$, has maximal rank.
\end{proof}

\begin{remark}\label{rmk:Mtilde}
Let $\tilde{M}_d$ be the $2(d-1)\times2(d-1)$ matrix obtained by dropping the first row of $M_d$. Also, let us denote by $\tilde{V}_{2d-2}\subset V_{2d-2}$ the subspace of polynomials without constant term. If we compose the map $L_d$ with the natural projection $V_{2d-2}\to \tilde{V}_{2d-2}$ we obtain a linear map $\tilde{L}_d\colon V_{2d-3}\to\tilde{V}_{2d-2}$ whose matrix representation is precisely $\tilde{M}_d$. Since $\tilde{M}_d$ is invertible we conclude that $\tilde{L}_d$ is an isomorphism.
\end{remark}

In order to compute the polynomials $R_d$ and $F_d$ we input the expressions for $c_k$, $\tilde{c}_k$, $S_k(w)$, $\widetilde{S}_k(w)$ and $R_k(w)$ for each $k<d$. We compute an explicit expression for the polynomial $P_d(w)$ in terms of $\lambda$, $\alpha$, $\beta$ according to the formulas found throughout Section \ref{sec:keylemma}. The polynomial $R_d(w)$ is the unique preimage of $P_d(w)$ under the linear map $L_d$. We can compute this preimage by inverting the isomorphism $\tilde{L}_d$ defined in Remark \ref{rmk:Mtilde}. Indeed, the projection of $P_d(w)$ onto $\tilde{V}_{2d-2}$ is given by $P_d(w)-P_d(0)$ and thus we can find $R_d(w)$ by solving the linear equation
\[ \tilde{L}_d(R_d)(w)=P_d(w)-P_d(0)\in\tilde{V}_{2d-2}. \]
Once an expression for $R_d(w)$ has been found we have that $L_d(R_d)(w)$ and $P_d(w)$ agree on every monomial of positive degree (i.e.~they have the same projections onto $\tilde{V}_{2d-2}$). The condition $L_d(R_d)(w)=P_d(w)$ is thus reduced to the equation
\[ L_d(R_d)(0)=P_d(0). \]
The equation $F_d=L_d(R_d)(0)-P_d(0)$ gives us therefore an explicit expression for $F_d$. Such expressions are quite complicated and so we do not include them here.

\subsection{Concluding the Elimination lemma}\label{subsec:concludingelimination}

Recall that we have defined the series of resultants
\begin{align*}
 \operatorname{Res}^1_j(\beta_0,\beta_1) &= \Res_{\beta_2}\big(F_3(\beta_0,\beta_1,\beta_2),F_j(\beta_0,\beta_1,\beta_2)\big), & j &= 4,5,6, \\
 \operatorname{Res}^2_j(\beta_0) &= \Res_{\beta_1}\big(\operatorname{Res}^1_4(\beta_0,\beta_1),\operatorname{Res}^1_j(\beta_0,\beta_1)\big), & j &= 5,6, \\
 \operatorname{Res}^3_6   &= \Res_{\beta_0}\big(\operatorname{Res}^2_5(\beta_0)/(\beta_0-\alpha_0),\operatorname{Res}^2_6(\beta_0)\big), & &
\end{align*}
and proved in Proposition \ref{prop:elimination1} that if $\operatorname{Res}_6^3\not\equiv0$ as a function of $\lambda$ and $\alpha$ then any solution $(\beta_0,\beta_1,\beta_2)$ to system (\ref{eq:polysystbis}) satisfies $\beta_0=\alpha_0$. 

After finding explicit expressions for the polynomials $F_d$ we have computed the above resultants and verified that $\operatorname{Res}_6^3\not\equiv0$ zero by evaluating it at the values
\begin{equation}\label{values}
  \lambda_1=2-i,\quad \lambda_2=2i,\quad \alpha_0=1,\quad \alpha_1=0,\quad \alpha_2=0,
\end{equation}
and obtaining a non--zero complex number.

The final step in the proof is proving Proposition \ref{prop:elimination2}. The determinant of the linear system 
\[ F_3(\alpha_0,\beta_1,\beta_2)=0,\qquad F_4(\alpha_0,\beta_1,\beta_2)=0 \] 
is also obtained with computer assistance and verified to be non--zero at the values of $\lambda$ and $\alpha$ given in (\ref{values}). All these computations can be found in the \hyperref[sec:appendix]{Appendix}. This completes the proof of the \hyperref[lemma:elimination]{Elimination lemma} and thus complete also the proof of Theorem \ref{thm:main}.

\clearpage
\fancyhead[LE,RO]{}\fancyhead[LO,RE]{\slshape APPENDIX: MATHEMATICA SCRIPT}
  \addcontentsline{toc}{section}{Appendix: \emph{Mathematica} script}

\label{sec:appendix}
\begin{changemargin}{-.6in}{-.6in}
\begin{center}
\includegraphics[scale=0.75]{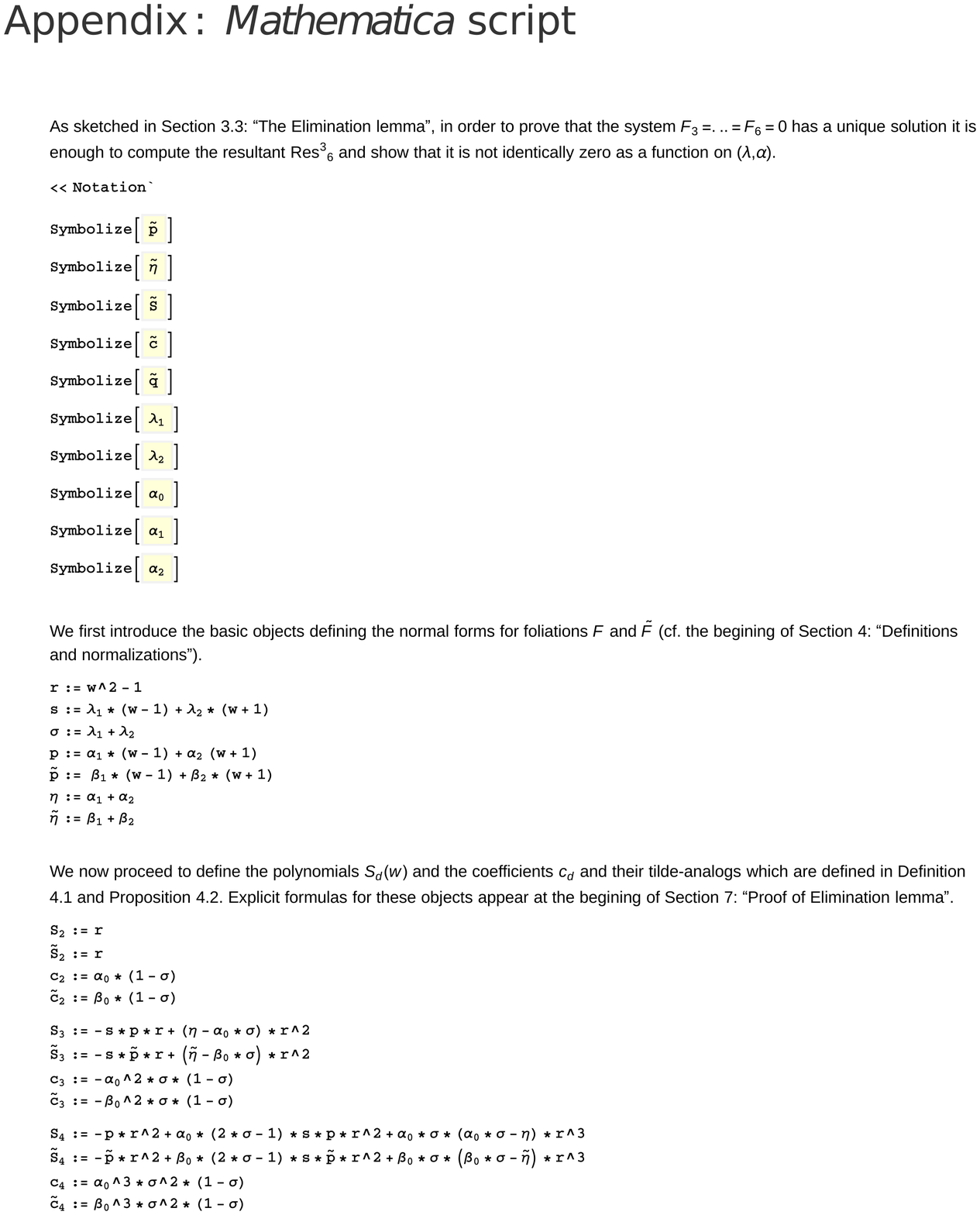}\newpage
\includegraphics[scale=0.75]{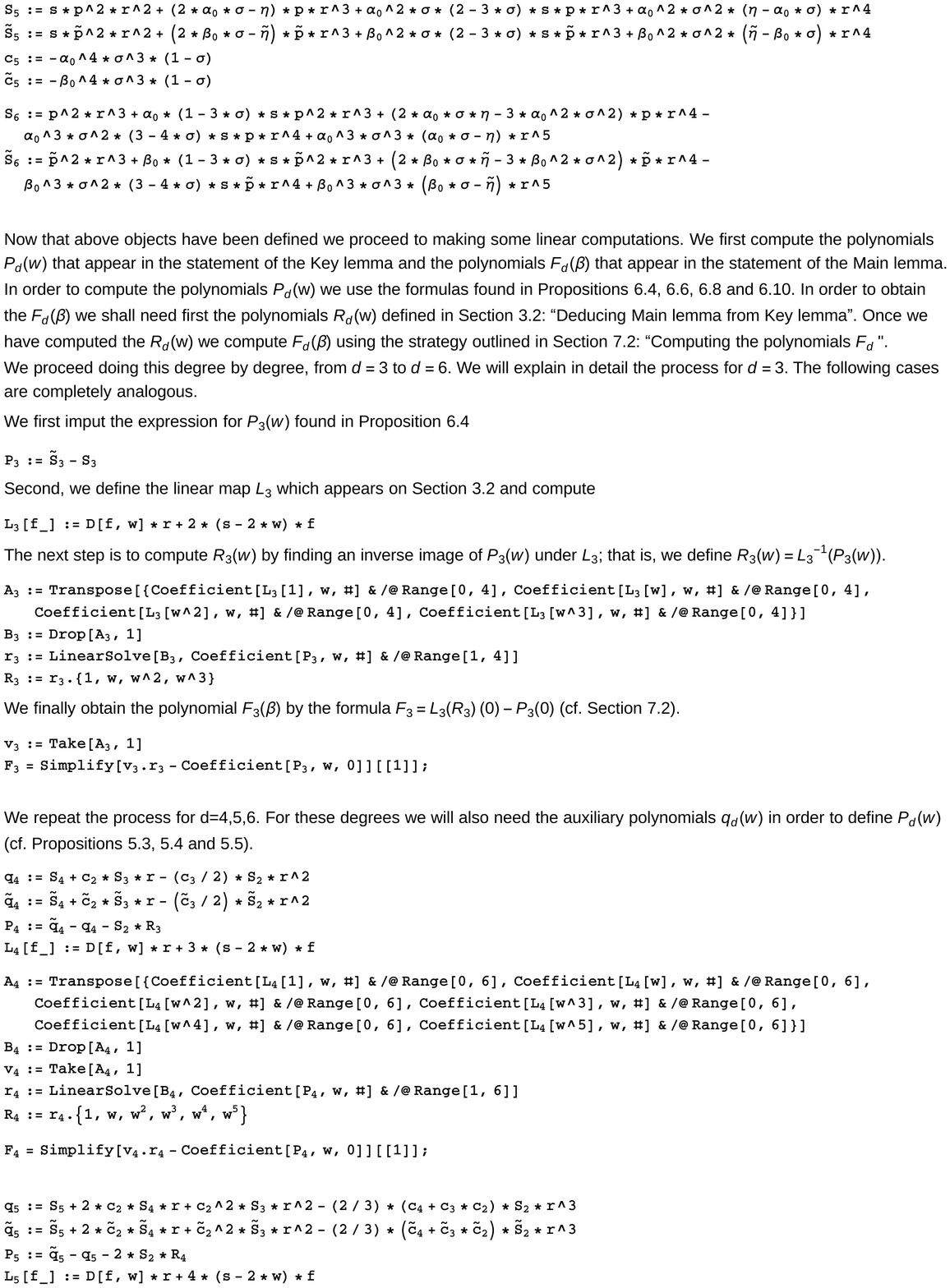}\newpage
\includegraphics[scale=0.75]{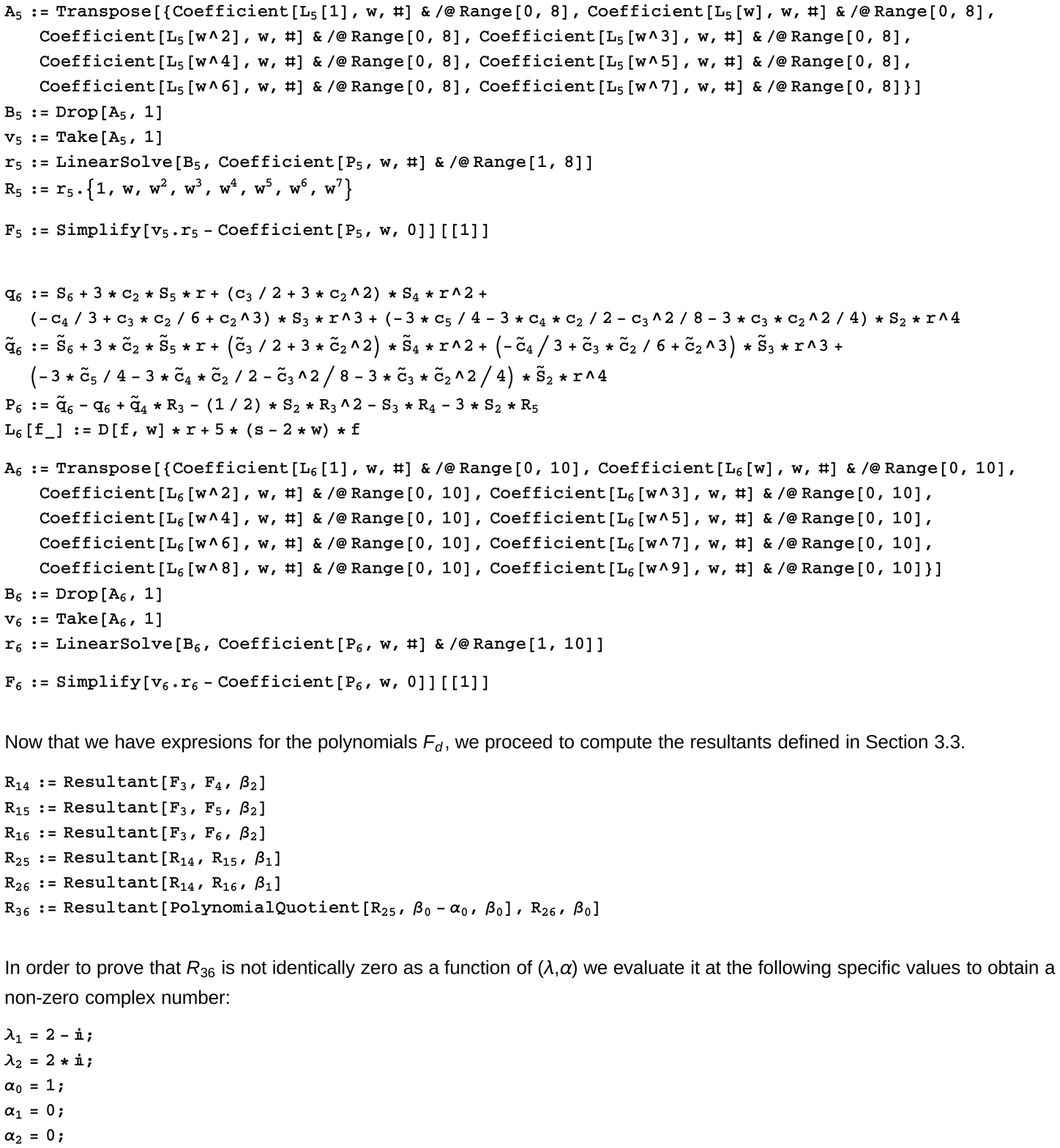}\newpage
\includegraphics[scale=0.75]{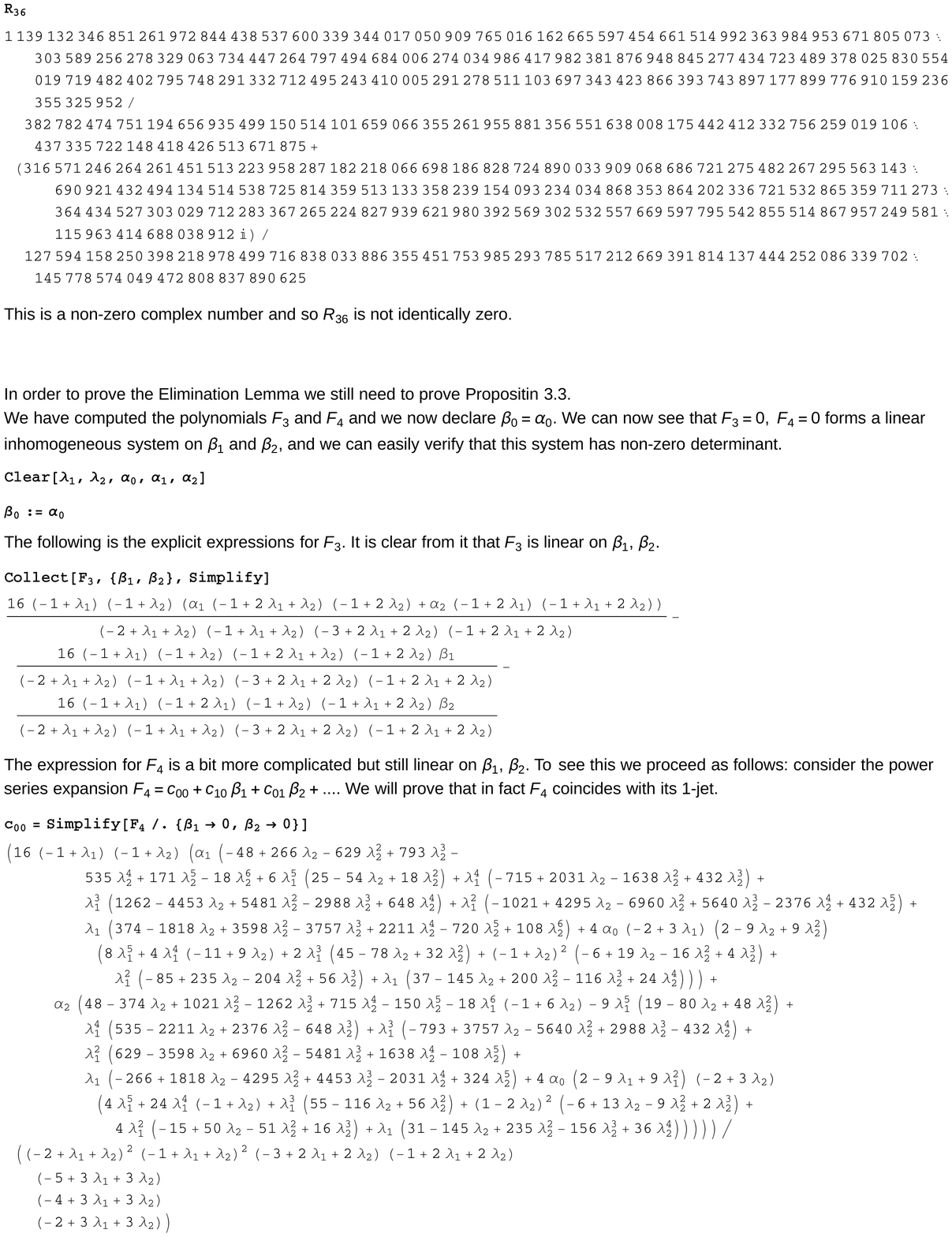}\newpage
\includegraphics[scale=0.75]{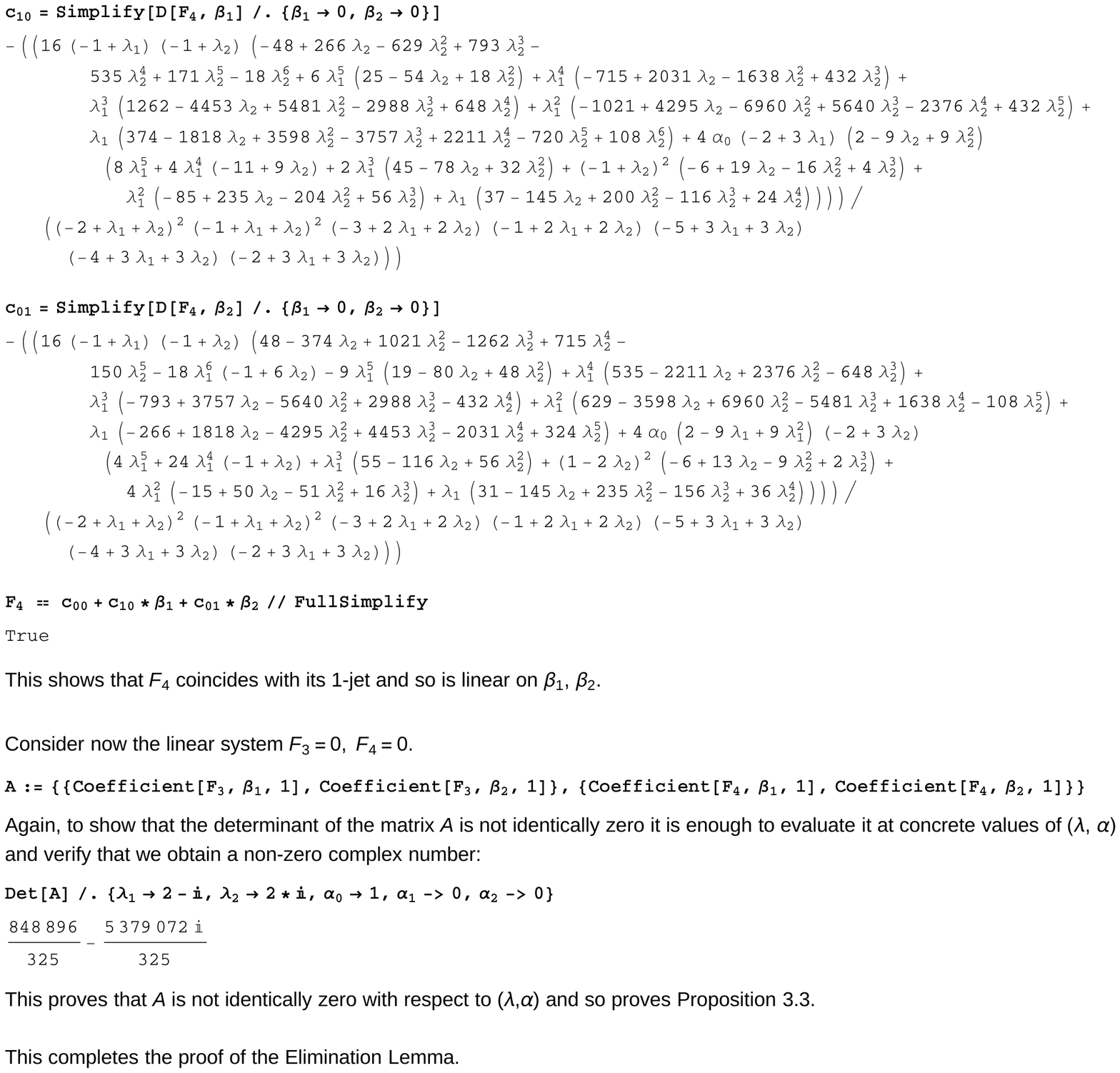}\newpage
\end{center}
\end{changemargin}

\clearpage
\fancyhead[LE,RO]{}\fancyhead[LO,RE]{\slshape \leftmark}
  \bibliographystyle{alpha}
  \bibliography{ref}
  \addcontentsline{toc}{section}{References}

~\\[4mm]
Valente Ram\'{i}rez
\begin{center} 
\begin{flushright}
\begin{minipage}{.975\textwidth}
\begin{flushleft}
Department of Mathematics\\
Cornell University\\
120 Malott Hall\\
Ithaca NY 14853\\
USA\\
\textit{e--mail:} \texttt{valente@math.cornell.edu}
\end{flushleft}
\end{minipage}
\end{flushright}
\end{center}

\end{document}